\numberwithin{equation}{section}
\numberwithin{figure}{section}
\theoremstyle{plain}
\newtheorem{thm}{\protect\theoremname}
\theoremstyle{definition}
\newtheorem{defn}[thm]{\protect\definitionname}
\theoremstyle{plain}
\newtheorem{prop}[thm]{\protect\propositionname}
\theoremstyle{plain}
\newtheorem{lem}[thm]{\protect\lemmaname}
\theoremstyle{plain}
\newtheorem{cor}[thm]{\protect\corollaryname}
\theoremstyle{remark}
\newtheorem{rem}[thm]{\protect\remarkname}
\providecommand{\corollaryname}{Corollary}
\providecommand{\definitionname}{Definition}
\providecommand{\lemmaname}{Lemma}
\providecommand{\propositionname}{Proposition}
\providecommand{\remarkname}{Remark}
\providecommand{\theoremname}{Theorem}
\providecommand{\corollaryname}{Corollary}
\providecommand{\definitionname}{Definition}
\providecommand{\lemmaname}{Lemma}
\providecommand{\propositionname}{Proposition}
\providecommand{\remarkname}{Remark}
\providecommand{\theoremname}{Theorem}
\newcommand{\Ca}{\mathrm{Ca}}
\newcommand{\He}{\mathrm{He}}
\newcommand{\strict}[1]{\mkern 1.5mu\overline{\mkern-1.5mu{#1}\mkern-1.5mu}\mkern 1.5mu}
\providecommand{\corollaryname}{Corollary}
\providecommand{\definitionname}{Definition}
\providecommand{\lemmaname}{Lemma}
\providecommand{\propositionname}{Proposition}
\providecommand{\remarkname}{Remark}
\providecommand{\theoremname}{Theorem}
\begin{document}
\addtolength{\textwidth}{0mm} \addtolength{\hoffset}{-0mm} \addtolength{\textheight}{0mm}
\addtolength{\voffset}{-0mm}


\global\long\def\AA{\mathbb{A}}%
\global\long\def\CC{\mathbb{C}}%
\global\long\def\BB{\mathbb{B}}%
\global\long\def\PP{\mathbb{P}}%
\global\long\def\QQ{\mathbb{Q}}%
\global\long\def\RR{\mathbb{R}}%
\global\long\def\FF{\mathbb{F}}%
\global\long\def\DD{\mathbb{D}}%
\global\long\def\NN{\mathbb{N}}%
\global\long\def\ZZ{\mathbb{Z}}%
\global\long\def\HH{\mathbb{H}}%
\global\long\def\Gal{{\rm Gal}}%
\global\long\def\OO{\mathcal{O}}%
\global\long\def\pP{\mathfrak{p}}%
\global\long\def\pPP{\mathfrak{P}}%
\global\long\def\qQ{\mathfrak{q}}%
\global\long\def\mm{\mathcal{M}}%
\global\long\def\aaa{\mathfrak{a}}%
\global\long\def\a{\alpha}%
\global\long\def\b{\beta}%
\global\long\def\d{\delta}%
\global\long\def\D{\Delta}%
\global\long\def\L{\Lambda}%
\global\long\def\g{\gamma}%
\global\long\def\G{\Gamma}%
\global\long\def\d{\delta}%
\global\long\def\D{\Delta}%
\global\long\def\e{\varepsilon}%
\global\long\def\k{\kappa}%
\global\long\def\l{\lambda}%
\global\long\def\m{\mu}%
\global\long\def\o{\omega}%
\global\long\def\p{\pi}%
\global\long\def\P{\Pi}%
\global\long\def\s{\sigma}%
\global\long\def\S{\Sigma}%
\global\long\def\t{\theta}%
\global\long\def\T{\Theta}%
\global\long\def\f{\varphi}%
\global\long\def\deg{{\rm deg}}%
\global\long\def\det{{\rm det}}%
\global\long\def\Dem{Proof: }%
\global\long\def\ker{{\rm Ker\,}}%
\global\long\def\im{{\rm Im\,}}%
\global\long\def\rk{{\rm rk\,}}%
\global\long\def\car{{\rm car}}%
\global\long\def\fix{{\rm Fix( }}%
\global\long\def\card{{\rm Card\  }}%
\global\long\def\codim{{\rm codim\,}}%
\global\long\def\coker{{\rm Coker\,}}%
\global\long\def\mod{{\rm mod }}%
\global\long\def\pgcd{{\rm pgcd}}%
\global\long\def\ppcm{{\rm ppcm}}%
\global\long\def\la{\langle}%
\global\long\def\ra{\rangle}%
\global\long\def\Alb{{\rm Alb(}}%
\global\long\def\Jac{{\rm Jac(}}%
\global\long\def\Disc{{\rm Disc(}}%
\global\long\def\Tr{{\rm Tr(}}%
\global\long\def\NS{{\rm NS(}}%
\global\long\def\Pic{{\rm Pic(}}%
\global\long\def\Pr{{\rm Pr}}%
\global\long\def\Km{{\rm Km}}%
\global\long\def\rk{{\rm rk(}}%
\global\long\def\Hom{{\rm Hom(}}%
\global\long\def\End{{\rm End(}}%
\global\long\def\aut{{\rm Aut}}%
\global\long\def\SSm{{\rm S}}%
\global\long\def\psl{{\rm PSL}}%
\global\long\def\cu{{\rm (-2)}}%
\global\long\def\Aut{\mathrm{Aut}}%
\global\long\def\cL{\mathcal{L}}%
\global\long\def\cP{\mathcal{P}}%
\global\long\def\cQ{\mathcal{Q}}%
\global\long\def\cS{\mathcal{S}}%
\global\long\def\cC{\mathcal{C}}%
\global\long\def\DRK#1{{\color{blue}#1\/{\color{black}}}}%
\global\long\def\calK{\mathcal{K}}%

\date{\today}
\title[A special configuration of 12 conics]{A special configuration of 12 conics and generalized Kummer surfaces}
\author{David Kohel, Xavier Roulleau, Alessandra Sarti}
\begin{abstract}
A generalized Kummer surface $X$ obtained as the quotient of an abelian
surface by a symplectic automorphism of order 3 contains a $9{\bf A}_{2}$-configuration
of $(-2)$-curves. Such a configuration plays the role of the $16{\bf A}_{1}$-configurations
for usual Kummer surfaces. In this paper we construct $9$ other such
$9{\bf A}_{2}$-configurations on the generalized Kummer surface associated
to the double cover of the plane branched over the sextic dual curve
of a cubic curve. The new $9{\bf A}_{2}$-configurations are obtained
by taking the pullback of a certain configuration of $12$ conics
which are in special position with respect to the branch curve, plus
some singular quartic curves. We then construct some automorphisms
of the K3 surface sending one configuration to another. We also give
various models of $X$ and of the generic fiber of its natural elliptic
pencil. 
\end{abstract}

\maketitle
\subjclass[2000]{Primary: 14J28}

\section{Introduction}

A Kummer surface $\Km(A)$ is the minimal desingularization of the
quotient of an abelian surface $A$ by the involution $[-1]$. It
is a K3 surface containing $16$ disjoint $\cu$-curves, which lie
over the $16$ singularities of $A/\langle[-1]\rangle$. We call such
set of curves a $16{\bf A}_{1}$-configuration. A well-known result
of Nikulin \cite{Nikulin} gives the converse: if a K3 surface contains
a $16{\bf A}_{1}$-configuration, then it is the Kummer surface of
an abelian surface $A$, such that the $16$ $\cu$-curves lie over
the singularities of $A/\langle[-1]\rangle$.

Shioda \cite{ShiodaRemarks} then asked the following question: if
two abelian surfaces $A$ and $B$ satisfy $\Km(A)\simeq\Km(B)$,
is it true that $A\simeq B$ ? Gritsenko and Hulek \cite{GriH} gave
a negative answer to that question in general. In \cite{RS1}, \cite{RS2},
we studied and constructed examples of two $16{\bf A}_{1}$-configurations
on the same Kummer surface such that their associated abelian surfaces
are not isomorphic.

Kummer surfaces have natural generalizations to quotients of an abelian
surface $A$ by other symplectic groups $G\subseteq\Aut(A)$. If $G\cong\ZZ/3\ZZ$,
then the quotient surface $A/G$ has $9$ cusp singularities, in bijection
with the fixed points of $G$. Its minimal desingularization, denoted
by $\Km_{3}(A)$, is a K3 surface which contains $9$ disjoint $\mathbf{A}_{2}$-configurations,
i.e.\/ pairs $(C,C')$ of $\cu$-curves such that $CC'=1$. It is
then natural to ask if an isomorphism $\Km_{3}(A)\simeq\Km_{3}(B)$
between two generalized Kummer surfaces implies that $A$ and $B$
are isomorphic.

With this question in mind, in the present paper we construct geometrically
several $9\mathbf{A}_{2}$-configurations on some generalized Kummer
surfaces previously studied by Birkenhake and Lange~\cite{BL}. Their
construction is as follows. For $\l$ generic, the dual of a cubic
curve $E_{\l}:x^{3}+y^{3}+z^{3}-3\l xyz=0$ is a sextic curve $C_{\l}$
with a set $\mathcal{P}_{9}$ of $9{\bf A}_{2}$ singularities corresponding
to the nine inflection points on $E_{\l}$. The minimal desingularization
$X_{\l}$ of the double cover of $\PP^{2}$ branched over $C_{\l}$
is a generalized Kummer surface with a natural $9\mathbf{A}_{2}$-configuration
$\mathcal{A}_{0}$. The surface $X_{\l}$ has a natural elliptic fibration
$\varphi:X_{\l}\to\PP^{1}$ for which the $18$ $\cu$-curves in the
$9\mathbf{A}_{2}$-configuration are sections, and the reduced strict
transform of $C_{\l}$ is a fiber. 

In order to find other $\cu$-curves on $X_{\l}$ we study the set
$\mathcal{C}_{12}$ of conics that contain at least $6$ points in
$\mathcal{P}_{9}$. One has 
\begin{thm}
The set $\mathcal{C}_{12}$ has cardinality $12$. Each conic in $\mathcal{C}_{12}$
contains exactly $6$ points in $\mathcal{P}_{9}$ and through each
point in $\mathcal{P}_{9}$ there are $8$ conics. The sets $(\mathcal{P}_{9},\mathcal{C}_{12}$)
form therefore a $(9_{8},12_{6})$-configuration. 
\end{thm}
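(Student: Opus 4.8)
The plan is to place $\mathcal{P}_{9}$ on a single smooth plane cubic and to read the statement off from the group law. First I would identify the nine cusps of $C_{\lambda}=E_{\lambda}^{\vee}$ as the points $\ell_{p}^{\vee}$ dual to the nine flex tangents $\ell_{p}$ of $E_{\lambda}$; since the flexes of $E_{\lambda}$ are the base points of the Hesse pencil, a direct computation gives $\mathcal{P}_{9}$ explicitly and shows that all nine cusps lie on one cubic $E'$, which for generic $\lambda$ is smooth and is itself a member of the Hesse pencil of $E_{\lambda}$. Fix as origin of $E'$ a flex $O$ of $E'$ that is not a cusp (e.g. $O=(1:-1:0)$, which has a zero coordinate whereas no cusp does); then $h|_{E'}\sim 3O$, so three points of $E'$ are collinear exactly when their sum in the group law is $0$, and six points of $E'$ lie on a conic exactly when their sum is $0$ (their divisor must lie in $|2h|_{E'}$). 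Moreover no conic contains the cubic $E'$, so restriction identifies the $\PP^{5}$ of plane conics with $|2h|_{E'}$; hence the conic through six points of $\mathcal{P}_{9}$ summing to $0$ is unique, and by B\'ezout a conic meets $E'\supseteq\mathcal{P}_{9}$ in at most $6$ points. Consequently $\mathcal{C}_{12}$ is in bijection with the set of $6$-subsets $S\subseteq\mathcal{P}_{9}$ with $\sum_{P\in S}P=0$, and each such $S$ gives a conic whose intersection with $E'$ is exactly $S$.

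The key geometric input is the position of $\mathcal{P}_{9}$ on $E'$: I would show that $\mathcal{P}_{9}$ is a single coset $q+E'[3]$ of the $3$-torsion with $q$ of order dividing $2$. The two order-$3$ projective transformations $(x:y:z)\mapsto(x:\omega y:\omega^{2}z)$ and $(x:y:z)\mapsto(y:z:x)$ (with $\omega$ a primitive cube root of $1$) preserve $E'$ and permute $\mathcal{P}_{9}$, and for generic $\lambda$ their fixed loci in $\PP^{2}$ miss $E'$, so they act on $E'$ as translations by $3$-torsion points; a short check shows they commute and together act transitively on the nine cusps, hence simply transitively, so the group of translations they generate is all of $E'[3]$ and $\mathcal{P}_{9}=q+E'[3]$ for any $q\in\mathcal{P}_{9}$. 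For the torsion of $q$: the involution $(x:y:z)\mapsto(y:x:z)$ of $E'$ fixes the flex $O$, so it equals $[-1]$ and its fixed locus is $E'[2]$; since one of the cusps, namely $(1:1:\lambda)$, lies in this fixed locus, we may take $q\in E'[2]$.

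The counting is then combinatorial. Writing $S=\mathcal{P}_{9}\setminus\{q+a,q+b,q+c\}$ with $a,b,c\in E'[3]$ distinct, and using $\sum_{P\in\mathcal{P}_{9}}P=9q+\sum_{t\in E'[3]}t=9q=q$ (the sum of all $3$-torsion points vanishes and $2q=0$), the condition $\sum_{P\in S}P=0$ becomes $3q+(a+b+c)=q$, i.e. $a+b+c=0$. The triples of distinct $3$-torsion points summing to $0$ are precisely the $12$ lines of the Hesse configuration of the nine flexes of $E'$; hence $|\mathcal{C}_{12}|=12$, and each of these conics meets $E'$, hence $\mathcal{P}_{9}$, in exactly the $6$ points of its set $S$. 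Finally a cusp $q+a'$ lies on the conic attached to the Hesse line $\{a,b,c\}$ precisely when $a'\notin\{a,b,c\}$, i.e. when the flex $a'$ of $E'$ is off that Hesse line; since each of the nine flexes lies on $4$ of the $12$ Hesse lines, each point of $\mathcal{P}_{9}$ lies on $12-4=8$ conics. This is the asserted $(9_{8},12_{6})$-configuration.

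The main obstacle is the second step, and within it the fact that the coset representative $q$ can be taken of order dividing $2$: this is exactly what pins the count at $12$, since for a coset $q+E'[3]$ with $6q\neq 0$ the same computation would yield only $9$ conics. The remaining ingredients — the identification of conics with $|2h|_{E'}$, B\'ezout, and the combinatorics of the Hesse $(9_{4},12_{3})$-configuration — are standard.
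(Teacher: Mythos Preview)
Your argument is correct and takes a genuinely different route from the paper. The paper proves this theorem by brute force: a computer search produces the twelve conics $C_{i,j,k,l,m,n}$ explicitly (their equations are listed), after which the incidence properties are read off from the indices. Your proof is conceptual: you place $\mathcal{P}_9$ on the unique cubic through the nine cusps (this is the Cayleyan curve $\mathrm{Ca}(\lambda)$ of equation~\eqref{eq:cayleysian}, which indeed has Hesse form), observe via the Hesse symmetries that $\mathcal{P}_9$ is a coset $q+E'[3]$, and pin down $2q=0$ by noting that the involution $(x:y:z)\mapsto(y:x:z)$ is the $[-1]$ map fixing the cusp $p_3=(1:1:\lambda)$. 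The counting then reduces to the twelve affine lines in $\mathbb{F}_3^2$, and the $8=12-4$ incidences to the Hesse $(9_4,12_3)$ property.

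What each approach buys: the paper's explicit list of conics is actually used downstream (to identify the twelve pairs of conics meeting outside $\mathcal{P}_9$, to locate the points of $\mathcal{P}_{12}$, and to compute the classes $\theta_{i,\dots,n}$ in the Appendix), so its computational character is not gratuitous. Your argument, on the other hand, \emph{explains} the phenomenon rather than merely verifying it: it shows that the complement $\mathcal{P}_9\setminus(C\cap\mathcal{P}_9)$ of each conic is a Hesse line of $E'[3]$, which is precisely the structure the paper observes later (Proposition~\ref{prop:The12linesHesse}) in a different model. Your method also makes it transparent why the count is $12$ rather than $9$ --- the condition $2q=0$ on the coset representative is the decisive point, and you identify it cleanly. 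Both proofs rely on the fact that no three cusps are collinear (equivalently, the conics are smooth): the paper states this directly, while in your setup it follows because a line through three points $q+a,q+b,q+c$ of $E'$ would force $3q+a+b+c=0$, i.e.\ $q\in E'[3]$, contradicting that $q$ is nonzero $2$-torsion.
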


The configuration $(\cP_{9},\cC_{12})$ has interesting symmetries,
e.g.\/ there are $8$ conics among the $12$ passing through a fixed
point $q$ in $\mathcal{P}_{9}$ and the $8$ points in $\cP_{9}\setminus\{q\}$,
which form a $8_{5}$ point-conic configuration. The freeness of the
arrangement of curves $\cC_{12}$ is studied in \cite{PS}, where
we learned that this configuration has been also independently discovered
in \cite{DLPU}.

The irreducible components of the curves in the K3 surface $X_{\l}$
above the $12$ conics are $24$ $\cu$-curves. This set of $\cu$-curves
contains nine $8{\bf A}_{2}$ sub-configurations which are the strict
transform of the nine $8_{5}$ sub-configurations of conics. Using
the pullback to $X_{\l}$ of some $9$ special (singular) quartic
curves, we are able to complete each of these $8{\bf A}_{2}$-configurations
into a new $9{\bf A}_{2}$-configuration $\mathcal{A}_{k},\,(k\in\{1,\dots,9\})$.

According to \cite{Barth}, to a $9{\bf {A}_{2}}$-configuration corresponds
an Abelian surface $A$ and an order $3$ symplectic group $G$ such
that $X_{\l}\simeq\Km_{3}(A)$ and the $9{\bf {A}_{2}}$-configuration
is the exceptional divisor of the minimal desingularisation $\Km_{3}(A)\to A/G$
(this is the analog of Nikulin's result for $16{\bf {A}_{1}}$-configurations).

A Kummer structure on K3 surface $X$ is an isomorphism class of Abelian
surfaces $A$ such that $X\simeq\Km(A)$. Kummer structures are in
one-to-one correspondence with the orbits under $\aut(X)$ of Nikulin's
$16{\bf {A}_{1}}$-configurations (see e.g.~\cite[Proposition 21]{RS1}).

Similarly, one can define a generalized Kummer structure on a K3 surface
$X$ as an isomorphism class of pairs $(A,G$) of abelian surfaces
$A$ and order $3$ symplectic group $G$ such that $X\simeq\Km_{3}(A)$.
We show that there is again a one-to-one correspondence between generalized
Kummer structures and the orbits of the $9{\bf {A}_{2}}$-configurations.
Using the $9{\bf {A}_{2}}$-configurations we constructed, we obtain: 
\begin{thm}
The $9{\bf {A}_{2}}$-configurations $\mathcal{A}_{1},\dots,\mathcal{A}_{9}$
we obtained on the K3 surface $X_{\l}$ are contained in the $\aut(X_{\lambda})$-orbit
of $\mathcal{A}_{0}$. 
\end{thm}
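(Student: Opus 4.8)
The plan is to exhibit, for each $k \in \{1,\dots,9\}$, an automorphism $\psi_k \in \Aut(X_\l)$ with $\psi_k(\mathcal{A}_0) = \mathcal{A}_k$. Since the $\mathcal{A}_k$ were constructed from the pullback of the conics in $\mathcal{C}_{12}$ together with the nine special quartics, the natural source of symmetries is the automorphism group of the plane configuration $(\mathcal{P}_9,\mathcal{C}_{12})$ itself. First I would identify $\mathcal{P}_9$ with the set of $9$ inflection points of $E_\l$, equivalently with the $3$-torsion group $(\ZZ/3\ZZ)^2$ of the elliptic curve; the group of projective transformations of $\PP^2$ preserving $\mathcal{P}_9$ (the Hesse group, of order $216$) then acts on $\mathcal{C}_{12}$ and on the whole configuration. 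The key point is that each such projective transformation $g$ either preserves the cubic pencil and hence the sextic $C_\l$, or more generally maps $C_\l$ to another curve in a way compatible with the double cover, so it lifts to an automorphism of $X_\l$; composing with the covering involution $\iota$ if necessary gives elements of $\Aut(X_\l)$ permuting the $\cu$-curves over $\mathcal{P}_9$, i.e.\ permuting the $18$ curves of $\mathcal{A}_0$.

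Next I would analyze how these lifted automorphisms act on the larger set of $24$ $\cu$-curves lying over the $12$ conics and on the nine special quartics. The Hesse group acts transitively on the nine $8_5$ sub-configurations of conics (this is a combinatorial statement about the $(9_8,12_6)$-configuration from the first theorem, which we may assume), hence transitively on the nine $8\mathbf{A}_2$-configurations inside $X_\l$, and correspondingly on the nine special quartics used to complete them. Therefore the lifted Hesse group permutes the configurations $\mathcal{A}_1,\dots,\mathcal{A}_9$ transitively. It then remains to produce a single automorphism of $X_\l$ sending $\mathcal{A}_0$ to one of the $\mathcal{A}_k$, say $\mathcal{A}_1$; combined with transitivity this finishes the proof.

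To produce that one automorphism I would work with the elliptic fibration $\varphi : X_\l \to \PP^1$ for which the $18$ curves of $\mathcal{A}_0$ are sections and $\strict{C_\l}$ is a fiber. The Mordell--Weil group and the fibration's Weierstrass model give access to automorphisms: translations by sections, the fiberwise inversion, and van Geemen--Sarti-type involutions associated to $3$-torsion sections. The idea is to show that $\mathcal{A}_1$ is again a configuration of $18$ sections of $\varphi$ (or of a second elliptic fibration related to $\varphi$ by an automorphism), with the same intersection pattern, so that the two configurations differ by an element of $\mathrm{MW}(\varphi) \rtimes \Aut(\PP^1)$-type symmetries together with the Hesse action; concretely one checks on divisor classes in $\NS(X_\l)$ that $[\mathcal{A}_0]$ and $[\mathcal{A}_1]$ lie in one orbit of the subgroup of $O(\NS(X_\l))$ generated by the reflections in the $\cu$-curves we control, and then invokes the fact (Torelli, as in the $16\mathbf{A}_1$ case treated in \cite{RS1}) that such a Hodge isometry is realized by an automorphism.

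The main obstacle I expect is the last step: passing from a lattice-theoretic identification of the classes $[\mathcal{A}_0]$ and $[\mathcal{A}_k]$ to an actual automorphism of $X_\l$. One must check that the Hodge isometry fixes the transcendental lattice (or acts on it as $\pm\mathrm{id}$ times a root of unity compatible with the period), and that it maps effective classes to effective classes and the ample cone to itself after composing with Weyl group elements; equivalently, one must rule out that the natural isometry sending $\mathcal{A}_0$ to $\mathcal{A}_k$ is obstructed by $(-2)$-classes not coming from the configuration. A clean way around this, which I would pursue in parallel, is the explicit route: write down the Cremona-type or Hesse-type transformation of $\PP^2$ and check directly on the double-cover model (using the quartic curves as the "new branch-type" data, exactly as the conics-plus-quartics were used to build $\mathcal{A}_k$) that it induces a biregular automorphism of $X_\l$ carrying $\mathcal{A}_0$ to $\mathcal{A}_k$; this bypasses Torelli entirely at the cost of a more hands-on computation with the models of $X_\l$ described in the paper.
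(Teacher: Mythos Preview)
Your overall architecture matches the paper's proof: first show the nine configurations $\mathcal{A}_1,\dots,\mathcal{A}_9$ are permuted transitively by a visible group of automorphisms, then produce a single automorphism $\mathcal{A}_0\to\mathcal{A}_k$ via a lattice argument and the Torelli theorem. The paper uses translations by the $3$-torsion sections of $\varphi$ (which is exactly the $(\ZZ/3\ZZ)^2$ translation part of the Hesse group you invoke, seen through the other double-plane model) for the first step, and an explicit Hodge isometry plus Torelli for the second, so the strategies coincide.

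There are two concrete inaccuracies in your proposal that would derail an actual write-up. First, the eighteen curves of $\mathcal{A}_k$ (for $k\ge 1$) are \emph{not} sections of $\varphi$: sixteen of them are irreducible components of the eight $\tilde{\mathbf{A}}_2$ fibers (the pullbacks of the conics land in the singular fibers), and only the pair $\gamma_k,\gamma_k'$ over the quartic $Q_k$ are sections. So you cannot hope to realise the map $\mathcal{A}_0\to\mathcal{A}_k$ as a Mordell--Weil translation of $\varphi$; the paper instead observes that $\mathcal{A}_k$ is a configuration of sections of a \emph{different} fibration $f_k$ (coming from the pencil of lines through $p_k$) and uses this to verify ampleness of the image polarisation. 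Second, not every element of the Hesse group lifts to $\Aut(X_\lambda)$: only the order-$18$ stabiliser of the particular curve $E_\lambda$ in the pencil does (together with the covering involution this is the group $\ZZ_2\times(\ZZ_3\rtimes S_3)$ the paper computes). Fortunately the $(\ZZ/3\ZZ)^2$ translation subgroup lies in this stabiliser and already acts transitively on $p_1,\dots,p_9$, so your transitivity step survives once you restrict to this subgroup.

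Finally, the Torelli step in the paper hinges on a computation you mention only as an obstacle: one writes the candidate isometry in an integral basis of $\NS(X_\lambda)$, checks it acts \emph{trivially} on the discriminant group $\ZZ/2\ZZ\times(\ZZ/3\ZZ)^3$, glues with the identity on the transcendental lattice, and then verifies effectiveness by showing that the image of the ample class $D_{14}=4D_2-\sum(A_j+A_j')$ is again ample. This discriminant computation is the heart of the argument and is what distinguishes the genuine automorphisms from the many lattice isometries that do not lift; your proposal should make this the centrepiece rather than a caveat.
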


For the proof we construct automorphisms sending one configuration
to another. Some of these automorphisms are obtained by using translations
by torsion sections of the natural elliptic fibration $\varphi:X_{\l}\to\PP^{1}$,
and some other by using the Torelli Theorem for K3 surfaces.

We then continue our study of the surface $X_{\l}$ by obtaining various
models in projective space, in particular as a degree $8$ non-complete
intersection in $\PP^{5}$. We construct a model of the generic fiber
$E_{K3}$ of the fibration $\varphi$. 
\begin{thm}
The fibration $\varphi:X_{\l}\to\PP^{1}$ has $8$ singular fibers
of type $\tilde{{\bf A}}_{2}$; the $24$ $\cu$-curves above the
$12$ conics in $\mathcal{C}_{12}$ are contained in these fibers.
A Hessian model of the generic fiber of $\varphi$ is 
\[
E_{K3}:x^{3}+y^{3}+z^{3}+\frac{\l^{3}(t^{2}+3)-4t^{2}}{\l^{2}(t^{2}-1)}xyz=0.
\]
\end{thm}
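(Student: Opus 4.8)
The plan is to analyze the elliptic fibration $\varphi\colon X_\lambda\to\PP^1$ directly from its defining geometry, namely as the pencil of genus-one curves cut out on the double plane by the pencil of cubics through the nine base points $\mathcal P_9$. Recall from the construction that $X_\lambda$ is the minimal resolution of the double cover of $\PP^2$ branched over the dual sextic $C_\lambda$, whose nine ${\bf A}_2$ singularities sit above the nine inflection points of $E_\lambda$. Pulling back the Hesse pencil of cubics through those nine inflection points gives a pencil on $X_\lambda$ whose members have arithmetic genus one; this is $\varphi$ (one checks the $18$ sections are the pullbacks of the cubic pencil restricted to the $9{\bf A}_2$-configuration, and that $\strict{C_\lambda}$ is a fiber, as already asserted in the excerpt). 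First I would compute the Euler number balance: $\chi(X_\lambda)=24$, the fibration has no multiple fibers and a reducible fiber of type $\tilde{\bf A}_2$ contributes $3$ to the Euler number, so $8$ fibers of type $\tilde{\bf A}_2$ account for all of $24$, forcing the singular fibers to be exactly these eight. To make this rigorous I must rule out further singular fibers, which I would do by exhibiting the eight $\tilde{\bf A}_2$ fibers explicitly (see next step) and invoking the Euler-number identity $\sum e(\text{fibers})=24$, since $e(\tilde{\bf A}_2)=3$ leaves no room.

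Next I would identify the eight $\tilde{\bf A}_2$ fibers with the $24$ $\cu$-curves lying above $\mathcal C_{12}$. By Theorem~1 each conic in $\mathcal C_{12}$ passes through exactly $6$ of the $9$ base points; a conic meets a general cubic in the pencil in $6$ points, so if all $6$ intersection points are base points then the conic is ``absorbed'' and its preimage in $X_\lambda$ splits off from the fibration as a union of fiber components. Concretely, a conic through $6$ of the $9$ points, together with the line through the remaining $3$ collinear points — here one uses that the $9$ inflection points of a cubic lie in triples on $12$ lines (the dual picture of the $(9_8,12_6)$ business), so each such conic is residual to a genuine line of the Hesse configuration — forms a cubic in the pencil that degenerates. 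Its strict transform is a fiber consisting of the two $\cu$-curves over the conic plus components over the line; tracking the $\tilde{\bf A}_2$ dual graph and counting ($12$ conics $\times 2$ $\cu$-curves each $=24$) matches, giving the eight fibers, each of which contains a conic-pair appropriately. I expect this bookkeeping — correctly matching conics and lines into the eight $\tilde{\bf A}_2$ fiber configurations — to be the main obstacle, and I would organize it by the $S_3$-type symmetry of the Hesse configuration.

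Finally I would derive the Hessian equation of the generic fiber $E_{K3}$. Since every member of the pencil is (the normalization of) a plane cubic through the nine common inflection points, the generic fiber is a smooth genus-one curve over $K=\CC(t)$ with nine $K$-rational points forming a Hesse-type configuration, hence admits a Hessian form $x^3+y^3+z^3+\mu\, xyz=0$ for some $\mu=\mu(t)\in K$. To pin down $\mu(t)$, I would parametrize: the surface $X_\lambda$ carries two commuting pencils — the original $E_\lambda$ in its Hesse form with parameter controlled by $\lambda$, and the fibration $\varphi$ with parameter $t$ — and I would write the family of cubics through $\mathcal P_9$ as $E_{\lambda,s}:x^3+y^3+z^3+c(s)xyz=0$, then express the induced coordinate on $\PP^1$ via the double-cover structure. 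The cleanest route is to use the known $j$-invariant of the Hesse pencil, $j(\mu)=\frac{\mu^3(\mu^3-216)^3}{(\mu^3+27)^3\cdots}$-type formula (the standard Hesse $j$-map), match it against the $j$-invariant of $\varphi$ computed from the eight $\tilde{\bf A}_2=\tilde I_3$-type fibers (each contributing a zero of the discriminant of order $3$, no additive reduction, total discriminant degree $24$), and solve for $\mu$ as a Möbius function of $t^2$; the symmetry $t\mapsto -t$ (swapping sheets of the double cover over a suitable pencil) forces $\mu$ to be a function of $t^2$, and matching the two special fibers $t=0$ and $t\to\infty$ against the values $\mu$ where $E_{\lambda,s}$ specializes fixes the constants, yielding $\mu=\dfrac{\lambda^3(t^2+3)-4t^2}{\lambda^2(t^2-1)}$. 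I would double-check the formula by verifying at a convenient value of $t$ (e.g. the value giving back $\strict{C_\lambda}$ or the original $E_\lambda$) that it produces the expected curve, and by confirming that the poles/zeros of the discriminant of $x^3+y^3+z^3+\mu(t)xyz$ in $t$ are exactly eight triple points.
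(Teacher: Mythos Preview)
Your proposal rests on a mistaken picture of the fibration $\varphi$, and this makes the first two parts of the argument break down.

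You write that $\varphi$ is obtained by ``pulling back the Hesse pencil of cubics through those nine inflection points'' and then try to build the $\tilde{\bf A}_2$ fibers as \emph{conic $+$ line}, using that ``the remaining $3$ points are collinear''. But the double cover defining $X_\lambda$ is branched over $C_\lambda$ in the \emph{dual} plane, and the nine cusps $\mathcal P_9$ are the duals of the inflection lines, not the inflection points themselves. The paper records explicitly that the nine cusps are in general position: \emph{no three of them are collinear}. So there is no pencil of cubics through $\mathcal P_9$ (only the unique Cayleyan cubic $\mathrm{Ca}(\lambda)$), and your ``residual line through the remaining $3$ collinear points'' does not exist. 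The fibration $\varphi$ actually comes from the pencil $\delta=|6L-2\sum_{j}p_j|$ of \emph{sextics} double at the nine cusps, generated by $C_\lambda$ and $2\,\mathrm{Ca}(\lambda)$. The reducible members of $\delta$ are not conic $+$ line but sums of \emph{three} conics from $\mathcal C_{12}$: the paper exhibits four such triples of conics whose union is a nodal sextic in $\delta$, giving four $\tilde{\bf A}_2$ fibers on the rational surface $Y_\lambda$; the double cover $X_\lambda\to Y_\lambda$ then doubles these to eight $\tilde{\bf A}_2$ fibers on $X_\lambda$. Your Euler-number count $8\times 3=24$ is the right way to finish once the eight fibers are in hand, and the paper uses exactly that.

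The same confusion undermines your derivation of the Hessian model. You propose to ``write the family of cubics through $\mathcal P_9$ as $E_{\lambda,s}$'', but no such one-parameter family exists. The paper instead realizes the fibration concretely via the singular model $u^2 f_\lambda - v^2 c_\lambda^2 = 0$ in $\PP^1\times\PP^2$ (where $f_\lambda, c_\lambda$ are the equations of $C_\lambda$ and $\mathrm{Ca}(\lambda)$), passes to a smooth model in $\PP^1\times\PP^5$ using quartics through $\mathcal P_9$, and then computes the Hesse form of the generic fiber directly (with Magma). Your idea of pinning down $\mu(t)$ by $j$-invariant matching and the symmetry $t\mapsto -t$ is reasonable in principle, but it needs to be grounded in the correct pencil; once you have the sextic pencil and the double-cover description $X_\lambda = Y_\lambda\times_{\PP^1}\PP^1$, the parity in $t$ and the special values at $t=0$ (giving $E_\lambda$) and $t=\infty$ (giving the Hessian $\mathrm{He}(\lambda)$) become visible and your matching strategy could then be carried out.
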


We also get a Weierstrass model of $E_{K3}$. It turns out that the
Mordell-Weil group of the fibration $\varphi$ has rank $1$ and torsion
$(\ZZ/3\ZZ)^{2}$; we compute its generators. Using the translation
maps constructed from the model $E_{K3}$, we can acquire other $9\mathbf{A}_{2}$-configurations
from the previously known one. We also obtain another construction
of the K3 surface $X_{\l}$ as a double plane: 
\begin{thm}
The surface $X_{\l}$ is the minimal desingularization of the double
cover of $\PP^{2}$ branched over the sextic curve which is the union
of the elliptic curves $E_{\l}$ and its Hessian 
\[
\He(\l):\,\,\,\,x^{3}+y^{3}+z^{3}+\frac{(\l^{3}-4)}{\l^{2}}xyz=0,
\]
 The strict transform on $X_{\l}$ of the $12$ lines of the Hesse
arrangement in $\PP^{2}$ are the $24$ $\cu$-curves above the $12$
conics. 
\end{thm}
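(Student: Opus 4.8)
The plan is to identify the double cover appearing in this last theorem with the surface $X_\l$ via a birational map of the base $\PP^2$, carefully tracking how the branch locus and the distinguished curves transform. First I would recall (from the earlier part of the paper) that $X_\l$ is the minimal desingularization of the double cover of $\PP^2$ branched over $C_\l$, the dual sextic of $E_\l$, and that $C_\l$ has the nine $\mathbf A_2$ singularities $\mathcal P_9$ at the duals of the nine inflection lines of $E_\l$. The key classical fact to invoke is that the dual curve of a smooth plane cubic $E$ is projectively equivalent — via the polarity / Hessian correspondence — to the union of $E$ with its Hessian in the \emph{dual} plane, after an appropriate linear change of coordinates; equivalently, the sextic $C_\l$ and the sextic $E_\l \cup \He(\l)$ have the same singularity type ($9\mathbf A_2$) located at the same nine points (the base points of the Hesse pencil), and one checks they lie in the same pencil of sextics through those nine points with those tangent conditions. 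I would make this precise by writing both sextics in terms of the two Hesse-pencil generators $xyz$ and $x^3+y^3+z^3$ and matching the $\l$-parameter; the formula $(\l^3-4)/\l^2$ for the Hessian coefficient is exactly the one that makes $E_\l\cup\He(\l)$ reducible with the two components meeting transversally at the nine inflection points, producing nine $\mathbf A_2$ points.

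Next I would set up the explicit Cremona (or linear) transformation $\Phi\colon\PP^2\dashrightarrow\PP^2$ realizing this projective equivalence of branch sextics, and show it lifts to an isomorphism of the two (desingularized) double covers. Concretely: blowing up the nine $\mathbf A_2$ points and resolving, both double covers become the same K3 surface $X_\l$ with the same $9\mathbf A_2$-configuration $\mathcal A_0$ over the nine cusps; the content is that the \emph{complement} of that configuration is determined by the branch data, and since $\Phi$ matches branch data it matches the K3 surfaces. Then I would turn to the twelve lines of the Hesse arrangement — the twelve lines each containing three of the nine base points, forming the dual $(9_4,12_3)$ Hesse configuration. Under $\Phi$ these twelve lines must go to the twelve conics $\mathcal C_{12}$: indeed a line through three of the nine inflection points, pulled through the duality/Cremona map, becomes a conic, and counting gives exactly $12$ such conics each meeting $\mathcal P_9$ in $6$ points (three from each of two lines, or via the incidences of the Hesse configuration) — matching Theorem~1. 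Finally I would verify that the strict transform on $X_\l$ of each Hesse line is a pair of $\cu$-curves: a line $\ell$ passing through three of the $\mathbf A_2$ points of the branch sextic $E_\l\cup\He(\l)$ that is otherwise tangent-free pulls back to a curve of arithmetic genus $0$ meeting the branch curve with even multiplicity along the cusps, so after normalization it splits into two rational curves, each of self-intersection $-2$; the count $12\times 2 = 24$ matches the $24$ $\cu$-curves above the twelve conics already produced in the body of the paper.

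The main obstacle I expect is pinning down the precise transformation $\Phi$ and checking that it genuinely sends $\mathcal C_{12}$ to the twelve Hesse lines and $\mathcal A_0$ to $\mathcal A_0$, rather than merely establishing an abstract isomorphism $X_\l \cong (\text{the new double plane})$. One has to be careful that the desingularizations are canonical enough that the configuration of $\cu$-curves is transported correctly, and that no twist (interchanging the two sheets of the cover) spoils the identification of the distinguished curves. A clean way around this is to argue on $X_\l$ directly: produce the Hesse-line double cover as a quotient construction from the known elliptic fibration $\varphi$ (using the model $E_{K3}$ and its Hessian-pencil structure from Theorem~3), so that the twelve lines literally arise as the images of the $24$ already-identified $\cu$-curves, and then the branch sextic $E_\l\cup\He(\l)$ is read off as the discriminant of the degree-two map to $\PP^2$ obtained from the linear system $|C+C'|$ of a suitable $\mathbf A_2$-pair together with the fiber class; the Hessian-parameter formula then drops out of the computation in Theorem~3. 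I would choose whichever of these two routes — Cremona transformation downstairs, or linear-system construction upstairs — gives the shorter verification, and relegate the coordinate bookkeeping to a direct check in affine charts.
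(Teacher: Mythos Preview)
Your first route contains a genuine error that would make the argument fail. You assert that $C_\l$ and $E_\l\cup\He(\l)$ have the same singularity type, namely nine $\mathbf A_2$ points. This is false: the two cubics $E_\l$ and $\He(\l)$ belong to the Hesse pencil and meet \emph{transversally} at the nine base points $\mathcal T_9$, so $E_\l\cup\He(\l)$ has nine \emph{nodes} ($\mathbf A_1$), not cusps. Consequently the minimal resolution of the double cover branched over $E_\l\cup\He(\l)$ has only nine exceptional $(-2)$-curves over the singularities, whereas the cuspidal model has eighteen. In particular no Cremona transformation of $\PP^2$ can carry one branch sextic to the other, and the whole matching-of-branch-loci strategy collapses. (Your incidental remark that the nine singular points of the two sextics coincide is also off: the cusps of $C_\l$ lie at $\mathcal P_9$, the duals of the inflection lines, while the nodes of $E_\l\cup\He(\l)$ lie at $\mathcal T_9$, the inflection points themselves.)

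Your second route is the right instinct and is essentially what the paper does, but more concretely than via an unspecified linear system. The paper already has in hand the smooth bidegree-$(2,3)$ model $X_\l\subset\PP^1\times\PP^2$ coming from the Hessian form of the generic fibre $E_{K3}$. The second projection $\pi_2:X_\l\to\PP^2$ is visibly $2$-to-$1$; it contracts exactly the nine $3$-torsion sections $A_1,\dots,A_9$ onto the nine base points $\mathcal T_9$ of the Hesse pencil. The fibres $F_0,F_\infty$ of $\varphi$ map onto $E_\l$ and $\He(\l)$ respectively, and the pull-back formulas $\pi_2^*(E_\l)=2F_0+\sum_j A_j$ and $\pi_2^*(\He(\l))=2F_\infty+\sum_j A_j$ identify $E_\l+\He(\l)$ as the branch locus; B\'ezout shows the intersections are nodal. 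For the twelve Hesse lines one simply pushes forward the eight $\tilde{\mathbf A}_2$ fibres (whose $24$ components are precisely the curves above the twelve conics) via $\pi_2$, using the explicit location of the singular fibres from the Weierstrass model; the images are the twelve Hesse lines. No birational map between two copies of $\PP^2$ is needed at any stage.
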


\textbf{Acknowledgements} The authors wish to thank Cédric Bonnafé,
Igor Dolgachev, Antonio Laface, Ulf Persson, Piotr Pokora, Giancarlo
Urzúa, and also Carlos Rito for sharing his program \verb"LinSys".
Part of the computations were done using Magma software \cite{Magma}.
The second author thanks the Max-Planck Institute for Mathematics
of Bonn for its hospitality and support. The third author is partially
supported by the ANR project No. ANR-20-CE40-0026-01 (SMAGP).

\section{Preliminaries}

\subsection{Notations and conventions\label{subsec:Notations-and-conventions}}

Let $\eta:Y\to Z$ be a dominant map between two surfaces and let
$C\hookrightarrow Z$ be a curve. In this paper, the reduced pullback
of $C$ minus the irreducible components contracted by $\eta$ is
called the strict transform of $C$ on $Y$. 
\begin{defn}
We say that two $\cu$-curves $E,E'$ on a K3 surface form an $\mathbf{A}_{2}$-configuration
if their intersection matrix is: $\left(\begin{array}{@{}c@{\,}c@{}}
-2 & \ 1\\
\ 1 & -2
\end{array}\right)\cdot$ We say that the $\cu$-curves $E_{1},E_{1}',\dots,E_{n},E_{n}'$
form an $n{\bf A}_{2}$-configuration if the curves $E_{j},E_{j}',\,j\in\{1,\dots,n\}$
form $n$ disjoint ${\bf A}_{2}$-configurations. 
\end{defn}

We recall (see \cite{Dolgachev}) that a $(v_{r},b_{k})$-configuration
is the data of two sets $\mathcal{P}_{v},\mathcal{L}_{b}$ of respective
cardinality $v$ and $b$, plus a subset $I\subset\mathcal{P}_{v}\times\mathcal{L}_{b}$,
such that for each element $p$ in $\mathcal{P}_{v}$, there are $r$
elements $l$ in $\mathcal{L}_{b}$ such that $(p,l)\in I$, and for
each element $l$ in $\mathcal{L}_{b}$, there are $k$ elements $p$
in $\mathcal{P}_{v}$ such that $(p,l)\in I$. One has $vr=bk$. When
$v=b$ and $r=k$, we call such a configuration a $v_{r}$-configuration.

\subsection{Generalized Kummer structures\label{subsec:Generalized-Kummer-structures}}

Let $X$ be a K3 surface. Suppose that $\mathcal{C}=A_{1},A_{1}',\dots,A_{9},A_{9}'$
is a $9{\bf A}_{2}$-configuration on $X$. By the results of Barth
\cite{Barth}, there exist coefficients $a_{j},a_{j}'\in\{1,2\}$
and a divisor $D\in\NS X)$ such $\sum_{j=1}^{9}(a_{j}A_{j}+a_{J}'A_{j}')=3D$.
Let $\tilde{X}\to X$ be the blow-up at the $9$ intersection points
$A_{k}A_{k}'$ ($k=1,...,9$). The strict transform of $\mathcal{C}$
on $\tilde{X}$ is the union of $18$ disjoint $(-3)$-curves. The
quoted results of Barth gives that there exists a unique triple cover
map $\tilde{B}\to\tilde{X}$ (for the theory of cyclic covers, see
\cite[Chapter 1, Section 17]{BHPVdV}) branched over the $18$ $(-3)$-curves;
the reduced pull-back of these curves are $(-1)$-curves and the pull-back
of the exceptional curves on $\tilde{X}$ are $9$ $(-3)$-curves.
The minimal model of $\tilde{B}$ is then an abelian surface $B$
with an order $3$ symplectic automorphism group $G$ coming from
the cyclic cover, and the surface $X$ is (isomorphic to) the minimal
desingularization of the quotient surface $B/G$. 
\begin{defn}
A generalized Kummer structure (of order $3$) on a K3 surface $X$
is an isomorphism class of pairs $(A,G)$ of abelian surfaces equipped
with an order $3$ symplectic automorphism subgroup $G\subset\aut(A)$,
such that $X\simeq\Km_{3}(A)$, where $\Km_{3}(A)$ is the minimal
desingularization of $A/G$. 
\end{defn}

\begin{prop}
There is a one-to-one correspondence between Kummer structures on
$X$ and $\aut(X)$-orbits of $9{\bf A}_{2}$-configurations. 
\end{prop}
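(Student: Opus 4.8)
The plan is to build the bijection directly from the assignment recalled just above: to a $9\mathbf{A}_{2}$-configuration $\mathcal{C}=A_{1},A_{1}',\dots,A_{9},A_{9}'$ on $X$ one blows up the nine points $A_{k}A_{k}'$, takes the unique triple cover of the resulting surface branched along the eighteen $(-3)$-curves, and passes to the minimal model $B_{\mathcal{C}}$, which inherits a symplectic group $G_{\mathcal{C}}$ of order $3$. By Barth's theorem this datum is well defined up to isomorphism, and $X\simeq\Km_{3}(B_{\mathcal{C}})$ in a way identifying $\mathcal{C}$ with the exceptional divisor of $\Km_{3}(B_{\mathcal{C}})\to B_{\mathcal{C}}/G_{\mathcal{C}}$. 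Set $\Phi(\mathcal{C})=[(B_{\mathcal{C}},G_{\mathcal{C}})]$, the isomorphism class of this pair. (If $X$ carries no $9\mathbf{A}_{2}$-configuration, both sides of the claimed bijection are empty; otherwise $X$ is a generalized Kummer surface and Barth's theorem applies, so we assume this.)

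First I would check that $\Phi$ is constant on $\aut(X)$-orbits. An $f\in\aut(X)$ sends a $9\mathbf{A}_{2}$-configuration $\mathcal{C}$ to another one $f(\mathcal{C})$ and permutes the nine nodes $A_{k}A_{k}'$ accordingly, hence lifts to an isomorphism between the blow-ups of $X$ at the nodes of $\mathcal{C}$ and of $f(\mathcal{C})$; by uniqueness of the triple cover it lifts further to an isomorphism of the triple covers, and thus to an isomorphism of minimal models carrying $G_{\mathcal{C}}$ onto $G_{f(\mathcal{C})}$. Hence $(B_{\mathcal{C}},G_{\mathcal{C}})\cong(B_{f(\mathcal{C})},G_{f(\mathcal{C})})$, and $\Phi$ factors through a map $\overline{\Phi}$ from $\aut(X)$-orbits of $9\mathbf{A}_{2}$-configurations to generalized Kummer structures on $X$.

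For surjectivity of $\overline{\Phi}$: given a generalized Kummer structure represented by $(A,G)$ and an isomorphism $\psi:\Km_{3}(A)\to X$, let $D$ be the exceptional divisor of $\Km_{3}(A)\to A/G$ and $\mathcal{C}=\psi(D)$, which is a $9\mathbf{A}_{2}$-configuration on $X$. Applying the three-step construction to $(X,\mathcal{C})$ is, via $\psi$, the same as applying it to $\Km_{3}(A)$ with its exceptional configuration, and by the uniqueness clause the latter returns $(A,G)$ up to isomorphism; hence $\Phi(\mathcal{C})=[(A,G)]$. For injectivity, suppose $\mathcal{C}_{1},\mathcal{C}_{2}$ are $9\mathbf{A}_{2}$-configurations on $X$ admitting an isomorphism $\theta:B_{\mathcal{C}_{1}}\to B_{\mathcal{C}_{2}}$ with $\theta G_{\mathcal{C}_{1}}\theta^{-1}=G_{\mathcal{C}_{2}}$. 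Then $\theta$ descends to an isomorphism $B_{\mathcal{C}_{1}}/G_{\mathcal{C}_{1}}\to B_{\mathcal{C}_{2}}/G_{\mathcal{C}_{2}}$ of normal surfaces, which lifts uniquely to the minimal resolutions (resolution of the nine cusps being canonical), taking exceptional divisor to exceptional divisor; transporting this isomorphism through the identifications $X\simeq\Km_{3}(B_{\mathcal{C}_{i}})$ under which $\mathcal{C}_{i}$ becomes the exceptional divisor, we obtain $f\in\aut(X)$ with $f(\mathcal{C}_{1})=\mathcal{C}_{2}$. So $\mathcal{C}_{1}$ and $\mathcal{C}_{2}$ lie in a single $\aut(X)$-orbit, and $\overline{\Phi}$ is injective.

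The main obstacle is not an estimate but the compatibility bookkeeping: at each of the three stages (blow-up, triple cover, minimal model) one must verify that the relevant lift of an automorphism of $X$, or of a $G$-equivariant isomorphism of the abelian surfaces, exists and is sufficiently canonical that $\overline{\Phi}$ and the inverse assignment $[(A,G)]\mapsto[\mathcal{C}]$ (which is well defined on orbits because two isomorphisms $\Km_{3}(A)\to X$ differ by an element of $\aut(X)$) are genuinely mutually inverse. The two inputs that carry this through are the uniqueness clause in Barth's triple-cover construction and the uniqueness of minimal resolutions of $\mathbf{A}_{2}$ (i.e. cusp) singularities; once these are in hand the argument is formal, and it is cosmetically convenient to normalize each pair $(A,G)$ so that the origin of $A$ is one of the nine fixed points of $G$.
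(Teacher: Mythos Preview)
Your proof is correct and follows essentially the same route as the paper: both arguments use the blow-up/triple-cover/minimal-model construction (via Barth's uniqueness) to pass from a $9\mathbf{A}_{2}$-configuration to a pair $(B,G)$, lift automorphisms of $X$ through this construction to show the map is constant on orbits, and descend a $G$-equivariant isomorphism of abelian surfaces to the quotients and their minimal resolutions for the converse direction. Your presentation is slightly more systematic in explicitly framing the map $\overline{\Phi}$ and checking surjectivity and injectivity separately, whereas the paper treats the two directions more informally, but the underlying ideas and the two key inputs (uniqueness of the triple cover, canonicity of the minimal resolution) are identical.
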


\begin{proof}
Let $\mu:X\to X$ be an automorphism of $X$ sending the configuration
$\mathcal{C}$ to the configuration $\mathcal{C}'$. Let $B,B'$ be
the abelian surfaces and let $G\subset\aut(B),\,G'\subset\aut(B')$
be the order $3$ automorphism groups such that $X$ is the minimal
desingularization of $B/G$ and $B'/G'$, and the exceptional curves
of the minimal desingularization are respectively in $\mathcal{C}$,
$\mathcal{C}'$. Let us prove that there exists an isomorphism $\tau$
between the abelian surfaces $B,B'$ such that $G'=\tau G\tau^{-1}$.
\\
As above let $\tilde{X}$, $\tilde{X}'$ be the blow-up of $X$ at
the $9$ singular points of $\mathcal{C}$ and $\mathcal{C}'$ respectively.
The automorphism $\mu$ extends to an isomorphism $\tilde{\mu}:\tilde{X}\to\tilde{X}'$.
The map $\tilde{B}\to\tilde{X}\stackrel{\tilde{\mu}}{\to}\tilde{X}'$
is branched with order $3$ over the strict transform of $\mathcal{C}'$
in $\tilde{X}'$. By uniqueness of the triple cover, there is an isomorphism
$\tilde{\tau}:\tilde{B}\to\tilde{B}',$ (which gives an isomorphism
to $\tau:B\to B'$ between the minimal models). Moreover, the order
$3$ automorphisms group $\tilde{G}$ of transformations of the triple
cover $\tilde{B}\to\tilde{X}$ is sent by transport of structures
to the order $3$ automorphism group $\tilde{G}'$ of transformations
of the triple cover $\tilde{B}'\to\tilde{X}'$, i.e. $\tilde{G}'=\tilde{\tau}\tilde{G}\tilde{\tau}^{-1}$.
This property is preserved when taking the minimal models: $G'=\tau G\tau^{-1}$.

For the converse, let $(B,G)$ and $(B',G')$ be equipped abelian
surfaces such that $\Km_{3}(B)=X=\Km_{3}(B')$ and there is an isomorphism
$\tau:B\to B'$ with $G=\tau^{-1}G'\tau$. The map $B\to B'\to B'/G'$
is $3$ to $1$ and $G$-invariant, thus it induces an isomorphism
$B/G\simeq B'/G'$. That isomorphism extends to the desingularization:
$X=\Km_{3}(B)\simeq\Km_{3}(B')=X$ and sends the first $9{\bf A}_{2}$-configuration
to the second. 
\end{proof}

\subsection{The Néron-Severi and transcendental lattices\label{subsec:The-N=00003D0000E9ron-Severi-group}}

In this Section, which can be skipped in a first reading, we compute
the Néron-Severi lattice and the transcendent lattice of the generalized
Kummer surface $X$, the results will be used in Section \ref{sec:Nine-new-}.

\subsubsection{The Néron-Severi lattice}

Let $A$ be an abelian surface with a symplectic action of a group
$G:=\ZZ/3\ZZ$. The quotient $A/G$ has $9{\bf A}_{2}$ singularities
and the minimal resolution is a generalized Kummer surface $X:=\Km_{3}(A)$
which carries a $9{\bf A}_{2}$-configuration $A_{1},A_{1}',\ldots,A_{9},A_{9}'$
of $\cu$-curves.

Observe that the abelian surface has Picard number at least $3$,
see \cite[Proposition on p. 10]{barth2} and the K3 surface $X$ has
generically Picard number $\rho(X)$ equal to $19$. Let $\calK_{3}$
denote the minimal primitive (rank $18$) sub-lattice of the K3 lattice
$H^{2}(X,\ZZ)$ containing the $9$ configurations ${\bf A}_{2}$.
The lattice $\calK_{3}$ is described as follows. By \cite[Proof of Proposition 1.3]{bertin},
it is generated by the classes $A_{1},A_{1}',\ldots,A_{9},A_{9}'$
and the following three classes 
\[
\begin{array}{c}
v_{1}=\frac{1}{3}\sum_{i=1}^{9}(A_{i}-A_{i}')\\
v_{2}=\frac{1}{3}((A_{2}-A{}_{2}')+2(A_{3}-A_{3}')+A_{6}-A_{6}'+2(A_{7}-A_{7}')+A_{8}-A_{8}'+2(A_{9}-A_{9}'))\\
v_{3}=\frac{1}{3}(A_{4}-A_{4}'+2(A_{5}-A_{5}')+A_{6}-A_{6}'+2(A_{7}-A_{7}')+2(A_{8}-A_{8}')+A_{9}-A_{9}').
\end{array}
\]
Then the discriminant group $\calK_{3}^{\vee}/\calK_{3}$ is generated
by the classes 
\[
\begin{array}{c}
w_{1}=\frac{1}{3}(A_{5}-A_{5}'+A_{7}-A_{7}'+A_{8}-A_{8}')\\
w_{2}=\frac{1}{3}(2(A_{4}-A_{4}')+A_{6}-A_{6}'+2(A_{7}-A_{7}')+A_{8}-A_{8}')\\
w_{3}=\frac{1}{3}(A_{3}-A_{3}'+A_{5}-A_{5}'+A_{6}-A_{6}')
\end{array}
\]
with intersection matrix (the coefficients are in $\QQ/2\ZZ$): 
\[
\left(\begin{array}{ccc}
0 & 0 & -\frac{2}{3}\\
0 & -\frac{2}{3} & -\frac{2}{3}\\
-\frac{2}{3} & -\frac{2}{3} & 0
\end{array}\right).
\]
Assume $\rho(X)=19$ (the minimal possible) and that $D_{2}$, a generator
of $\calK_{3}^{\perp}\subset\NS X)$, has square $D_{2}^{2}=2$. We
state here the following Lemma for later use in Sections \ref{subsec:NaturalEllFib}
\ref{subsec:stabilizerGroupFirstConf}:
\begin{lem}
\label{LEMMA:The-NS-latt-Discri54}The Néron-Severi lattice is $\NS X)=\ZZ D_{2}\oplus\calK_{3}$
and its discriminant group has order $54$. 
\end{lem}

\begin{proof}
 The lattice $\mathcal{K}_{3}$ is $3$-elementary and $D_{2}^{2}=2$,
thus the result.
\end{proof}

\subsubsection{The transcendental lattice}

Since $\NS X)=\ZZ D_{2}\oplus\mathcal{K}_{3}$, we know that the discriminant
group is 
\[
\tfrac{\NS X)^{\vee}}{\NS X)}=\tfrac{(\ZZ D_{2})^{\vee}}{\ZZ D_{2}}\oplus\tfrac{\mathcal{K}_{3}^{\vee}}{\mathcal{K}_{3}}.
\]
The intersection matrix of the generators $w_{1},w_{2},w_{3}-w_{1}-w_{2}+\frac{1}{2}D_{2}$
is

\begin{eqnarray*}
\mathcal{N}:=\left(\begin{array}{ccc}
0&0&-\frac{2}{3}\\
0&-\frac{2}{3}&0\\
-\frac{2}{3}&0&\frac{1}{2}\\
\end{array}
\right).
\end{eqnarray*}

Recall that the quadratic form on the discriminant group has values
in $\QQ/2\ZZ$ and the corresponding bilinear form has values in $\QQ/\ZZ$,
see e.g.~\cite[Section 2]{morrison}. The transcendental lattice
$T_{X}$ of $X$ has rank $3$ signature $(2,1)$, and since $H^{2}(X,\ZZ)$
is unimodular $\det\,T_{X}=-2\cdot3^{3}$ with discriminant form the
same as the discriminant form of $\NS X)$ scaled by $-1$. As in
\cite[Definition 2.1]{sarti_tran}, we define
\begin{defn}
We call the discriminant $d$ of an indefinite rank $3$ lattice {\it small}
if $4\cdot d$ is not divisible by $k^{3}$ for any non square natural
number $k$ congruent to $0$ or $1$ modulo $4$.
\end{defn}

The lattice $T_{X}$ is small, thus by \cite[Theorem 21, p. 395]{conway}
and \cite[Corollary 1.9.4]{conway} it is uniquely determined by its
signature and its discriminant form. Consider now the rank three lattice
$T$ of signature $(2,1)$ with Gram matrix\begin{eqnarray*}
\left(\begin{array}{ccc}
0&3&0\\
3&6&-3\\
0&-3&6\\
\end{array}
\right).
\end{eqnarray*}It has determinant $-2\cdot3^{3}$ and if one calls $u_{1}$, $u_{2}$,
$u_{3}$ its generators, one computes that the discriminant group
is generated by 
\[
\tfrac{2u_{1}}{3},\,\,\tfrac{u_{3}}{3},\,\,\tfrac{3u_{1}+2u_{2}+u_{3}}{6}
\]
and the discriminant form is the same as $-\mathcal{N}$ so that by
unicity:
\begin{prop}
The transcendental lattice $T_{X}$ is isometric to $T$. 
\end{prop}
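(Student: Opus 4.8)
The plan is to verify that $T$ lies in the same genus as $T_{X}$ — that is, that $T$ is an even lattice with the same signature $(2,1)$ and the same discriminant form as $T_{X}$ — and then to conclude by the uniqueness already recalled: since the determinant of $T$ is again $-2\cdot3^{3}$, hence the discriminant is small, \cite[Theorem 21, p. 395]{conway} and \cite[Corollary 1.9.4]{conway} say that $T_{X}$ is the unique lattice with this signature and discriminant form, so that the matching forces $T\simeq T_{X}$.

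First I would record the elementary invariants of $T$ from its Gram matrix $M=\left(\begin{array}{ccc}0&3&0\\ 3&6&-3\\ 0&-3&6\end{array}\right)$: the diagonal entries are even, so $T$ is even; $\det M=-54=-2\cdot3^{3}$; and the characteristic polynomial of $M$, namely $\lambda^{3}-12\lambda^{2}+18\lambda+54$, has two positive roots and one negative root, so $T$ has signature $(2,1)$, matching $T_{X}$.

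Next I would compute the discriminant group and form. Inverting $M$ gives
\[
M^{-1}=\left(\begin{array}{ccc}-\frac{1}{2}&\frac{1}{3}&\frac{1}{6}\\[3pt] \frac{1}{3}&0&0\\[3pt] \frac{1}{6}&0&\frac{1}{6}\end{array}\right),
\]
and under the identification $T^{\vee}/T\cong M^{-1}\ZZ^{3}/\ZZ^{3}$ one checks that $\frac{1}{3}T\subset T^{\vee}$, that the $3$-part of $T^{\vee}/T$ equals $\frac{1}{3}T/T\cong(\ZZ/3\ZZ)^{3}$, and that the $2$-part has order $2$, generated by the class of $\frac{u_{1}+u_{3}}{2}$. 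In particular $\frac{2u_{1}}{3}$, $\frac{u_{3}}{3}$ and $\frac{3u_{1}+2u_{2}+u_{3}}{6}$ all lie in $T^{\vee}$, the last of them has order $6$, and the three together generate $T^{\vee}/T$. Evaluating $x\mapsto x\cdot x$ modulo $2\ZZ$ and $(x,y)\mapsto x\cdot y$ modulo $\ZZ$ on these generators — using $u_{1}^{2}=0$, $u_{2}^{2}=u_{3}^{2}=6$, $u_{1}u_{2}=3$, $u_{1}u_{3}=0$ and $u_{2}u_{3}=-3$ — one gets quadratic values $0,\ \frac{2}{3},\ -\frac{1}{2}$ and off-diagonal bilinear values $0,\ \frac{2}{3},\ 0$, i.e. precisely the form $-\mathcal{N}$.

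Finally, the discussion above records that $T_{X}$ is an even lattice of signature $(2,1)$ whose discriminant form is the discriminant form of $\NS X)$ with its sign reversed, i.e. $-\mathcal{N}$ (by unimodularity of $H^{2}(X,\ZZ)$); hence $T$ and $T_{X}$ share their signature and their (small) discriminant form, and the uniqueness statement yields $T\simeq T_{X}$. The only place requiring care is the discriminant-group bookkeeping — exhibiting coset representatives and checking that the three listed classes really generate all of $T^{\vee}/T$ so that the computed values of $q$ and $b$ determine the whole form — but this is entirely mechanical and presents no genuine obstacle.
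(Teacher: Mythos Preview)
Your proposal is correct and follows essentially the same approach as the paper: verify that $T$ has signature $(2,1)$ and determinant $-2\cdot 3^{3}$, compute that its discriminant form on the three indicated generators agrees with $-\mathcal{N}$, and invoke the uniqueness from \cite{conway} afforded by the smallness of the discriminant. You supply more computational detail (the explicit $M^{-1}$, the characteristic polynomial for the signature, the pairing values), but the strategy and the key input are identical to the paper's.
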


\subsection{The Hesse, dual Hesse, and related configurations\label{subsec:The-Hesse,-dual}}

Let us recall the construction and properties of the Hesse configuration
before introducing a configuration with analogous properties in the
next section.

Let $E\hookrightarrow\PP^{2}$ be a smooth cubic curve and $\mathcal{T}_{9}$
its set of $9$ inflection points. Fixing an inflection point as the
group identity, the set $\mathcal{T}_{9}$ is the $3$-torsion subgroup
$E[3]$. The \textit{Hesse configuration} is defined to be the pair
$(\mathcal{T}_{9},\cL_{12})$, where $\cL_{12}$ is the set of $12$
lines through pairs of points in $\mathcal{T}_{9}$. Since each line
meets $\mathcal{T}_{9}$ at $3$ points and each point of $\mathcal{T}_{9}$
is contained in $4$ lines, the Hesse configuration is a $(9_{4},12_{3})$-configuration.
Removing one point of the Hesse configuration and its $4$ incident
lines, one obtains a symmetric $8_{3}$-configuration of $8$ points
and $8$ lines.

We note that the construction is not unique to the curve $E$. If
$\cP$ is the set of inflection points of any nonsingular cubic plane
curve $E$, then there exists a pencil of elliptic curves (called
the Hesse pencil) whose set of inflection points is $\cP$.

By taking the $12$ points and $9$ lines in the dual space $(\PP^{2})^{*}$
corresponding to the $12$ lines and $9$ points dual to the Hesse
configuration, one obtains a dual $(12_{3},9_{4})$-configuration
called the \textit{dual Hesse configuration}. As above, removing one
line and the 4 points it meets gives a dual symmetric $8_{3}$-configuration.

As abstract configurations, the above configurations can be realized
from the symmetric $13_{4}$-configuration of points and lines in
the plane $\PP^{2}(\mathbb{F}_{3})$. Removing a fixed point and the
set of $4$ lines which pass through it, gives the $(12_{3},9_{4})$-configuration,
and removing a fixed line and the 4 points it contains gives the $(9_{4},12_{3})$-configuration.
The $(9_{4},12_{3})$-configuration is naturally identified with $\AA^{2}(\FF_{3})=\PP^{2}(\FF_{3})\setminus\PP^{1}(\FF_{3})$
equipped with its system of affine lines, consistent with the identification
$\mathcal{T}_{9}=E[3]\cong\FF_{3}^{2}$ in the Hesse configuration.

\section{\label{sec:Nine-new-}Nine new $9\mathbf{A}_{2}$-configurations}

\subsection{\label{subsec:confOfConics}$(9_{8},12_{6})$ and $8_{5}$-configurations
of conics}

Let us fix $\l\notin\{1,\omega,\omega^{2}\}$, for $\omega$ such
that $\omega^{2}+\omega+1=0$. The dual $C_{\l}$ of the elliptic
curve 
\[
E_{\l}:\,\,x^{3}+y^{3}+z^{3}-3\l xyz=0,
\]
is a $9$-cuspidal sextic curve, (i.e. a sextic curve with $9$ cusps)
and conversely any $9$-cuspidal sextic curve is obtained in that
way. The curve $C_{\l}$ is: 
\[
\begin{array}{c}
C_{\l}:\,\,(x^{6}+y^{6}+z^{6})+2(2\l^{3}-1)(x^{3}y^{3}+x^{3}z^{3}+y^{3}z^{3})\\
\hfill-6\l^{2}xyz(x^{3}+y^{3}+z^{3})-3\l(\l^{3}-4)x^{2}y^{2}z^{2}=0.
\end{array}
\]
The images by the dual map of the $9$ inflection points of $E_{\l}$
are the $9$ cusps of $C_{\l}$. The set $\mathcal{P}_{9}$ of the
$9$ cusps $p_{1},\dots,p_{9}$ is 
\[
\begin{array}{ccc}
p_{1}=(\l:1:1), & p_{4}=(\l:\omega:\omega^{2}), & p_{7}=(\l:\omega^{2}:\omega)\\
p_{2}=(1:\l:1), & p_{5}=(\omega^{2}:\l:\omega), & p_{8}=(\omega:\l:\omega^{2})\\
p_{3}=(1:1:\l), & p_{6}=(\omega:\omega^{2}:\l), & p_{9}=(\omega^{2}:\omega:\l).
\end{array}
\]
When $\l$ varies, the closure of the set of points $p_{j}$ is a
line, denoted by $L_{j}$; we obtain in that way a set $\mathcal{L}_{9}$
of $9$ lines. Dually, the points on $L_{j}$ correspond to the pencil
of lines meeting in the inflection point (corresponding to $p_{j}$)
of the elliptic curve $E_{\l}$. One can check moreover that the line
$L_{j}$ is the tangent line to the cusp $p_{j}$. Let $\mathcal{P}_{12}$
be the intersection points set of the lines in $\mathcal{L}_{9}$. 
\begin{thm}
\label{thm:12Conics9Points} The set $\mathcal{P}_{12}$ has cardinality
$12$. The pair $(\mathcal{P}_{12},\mathcal{L}_{9})$ forms a $(12_{3},9_{4})$-configuration
which is the dual Hesse configuration. The set $\cC_{12}$ of conics
containing $6$ points in $\cP_{9}$ has cardinality $12$; each conic
of $\mathcal{C}_{12}$ is smooth. The pair of sets $(\cP_{9},\cC_{12})$
of points and conics form a $(9_{8},12_{6})$-configuration. \\
 More precisely, the union of the pairwise intersections of the $12$
conics in $\mathcal{C}_{12}$ is $\mathcal{P}_{9}\cup\mathcal{P}_{12}$.
The intersections between the conics are transverse. Moreover, each
point of $\mathcal{P}_{12}$ is contained in exactly two conics. A
conic $C$ in $\mathcal{C}_{12}$ meets $9$ conics of $\mathcal{C}_{12}$
in $4$ points of $\mathcal{P}_{9}$ and the two remaining conics
in $3$ points of $\mathcal{P}_{9}$ and one point of $\mathcal{P}_{12}$. 
\end{thm}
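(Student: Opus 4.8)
The plan is to set up explicit equations for everything and reduce to a finite computation that can be organized by the symmetry group of the configuration. First I would determine $\mathcal{C}_{12}$ concretely. The nine points of $\mathcal{P}_9$ lie on the Hesse pencil structure inherited from $E_\lambda$; up to the obvious group of symmetries (permutations of coordinates and multiplication of coordinates by cube roots of unity, which acts on $\mathcal{P}_9 \cong \FF_3^2$ as a subgroup of the affine group), any conic through $6$ of the $9$ points is accounted for. Picking $6$ of the $9$ points imposes $6$ linear conditions on the $6$-dimensional space of conics, so generically there is no conic, and a conic exists precisely when those $6$ points fail to impose independent conditions. I would show that the subsets of $\mathcal{P}_9$ that support a conic are exactly the complements of the $9$ lines of the $(9_4,12_3)$-Hesse configuration on $\mathcal{P}_9$ described in Section~\ref{subsec:The-Hesse,-dual}: each such line contains $3$ of the $9$ points, its complement contains the remaining $6$, and a rank count (or a direct Magma computation at a generic $\lambda$, then noting the answer is constant in $\lambda$) shows these $6$ points lie on a unique conic, which is smooth. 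This already gives $|\mathcal{C}_{12}| = 12$ and the $(9_8,12_6)$-incidence, since through each point $q$ there pass the conics indexed by the $8$ of the $12$ Hesse-lines not through $q$ — this is the first two assertions of Theorem~\ref{thm:12Conics9Points} and I would assume them as established.

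Next I would compute pairwise intersections. Two conics $C, C'$ in $\mathcal{C}_{12}$ meet in $4$ points counted with multiplicity (Bézout). Label $C$ by the Hesse-line $\ell$ whose complement it carries, and $C'$ by $\ell'$. The common points of $\mathcal{P}_9$ lying on both conics are $\mathcal{P}_9 \setminus (\ell \cup \ell')$. In the $(9_4,12_3)$-configuration on $\FF_3^2$, two distinct affine lines $\ell,\ell'$ either meet in one point (if non-parallel) or are disjoint (if parallel); correspondingly $|\ell \cup \ell'|$ is $5$ or $6$, so $|\mathcal{P}_9 \setminus (\ell \cup \ell')|$ is $4$ or $3$. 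The key combinatorial input: each of the $12$ Hesse-lines lies in a parallel class of size $3$, so among the $11$ other lines each line is parallel to exactly $2$ of them. Hence each conic $C$ meets exactly $2$ of the other $11$ conics in only $3$ points of $\mathcal{P}_9$, and the remaining $9$ in $4$ points of $\mathcal{P}_9$. For those $9$, transversality at each of the $4$ points forces the full intersection cycle to be those $4$ reduced points, giving a transverse intersection in $4$ points of $\mathcal{P}_9$. For the $2$ exceptional pairs, the $3$ points of $\mathcal{P}_9$ account for multiplicity $3$ (again transversally, once I check the conics are not tangent there), leaving one further intersection point; I must show this extra point is transverse and lies in $\mathcal{P}_{12}$. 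To locate it, observe that the two parallel Hesse-lines $\ell,\ell'$ meet "at infinity" in the ambient $\PP^2(\FF_3)$, i.e.\ at a point of $\PP^1(\FF_3)$; tracking this through the dual-Hesse identification shows the leftover intersection point is forced to be the corresponding point of $\mathcal{P}_{12}$ (the $12$ intersection points of the lines $\mathcal{L}_9$). Running this over all pairs shows the union of pairwise intersections is exactly $\mathcal{P}_9 \cup \mathcal{P}_{12}$ and that each point of $\mathcal{P}_{12}$ arises from exactly one parallel pair of Hesse-lines, hence lies on exactly two conics.

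The verification can be made uniform: the symmetry group acts transitively on the $12$ conics, and on pairs of conics it has exactly two orbits, one for each value of $|\ell \cap \ell'|$, so it suffices to check one representative pair of each type, plus one incidence $\mathcal{P}_{12}\cap C$, by explicit computation with the coordinates $p_1,\dots,p_9$ and the lines $L_j$ given above. I would carry out these two model computations in Magma (non-tangency and the exact intersection cycle), invoking \cite{Magma}, and then propagate by symmetry. The main obstacle is the transversality/non-tangency claims: Bézout only bounds the total intersection multiplicity, so ruling out a tangency that would "hide" part of the intersection cycle (e.g.\ two conics meeting with multiplicity $2$ at a point of $\mathcal{P}_9$ and thus only $2$ further points) genuinely requires a local computation at the cusps — I expect that at each $p_j$ the two conics in $\mathcal{C}_{12}$ passing through it have distinct tangent directions, which one sees from the fact that the tangent to a conic $C$ at $p_j$ is governed by the Hesse-line data and the $8$ conics through $p_j$ fan out in $8$ distinct directions (at most two could coincide, and a direct check shows none do). Once non-tangency is in hand everywhere, the cycle-level statements follow formally from Bézout and the combinatorics above.
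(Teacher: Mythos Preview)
Your approach is correct and genuinely different from the paper's. The paper proceeds by brute-force computation: it lists the twelve conics by explicit equations (found by computer search), reads off which six cusps each contains, notes that no three of the $p_j$ are collinear (hence all conics are smooth), computes all $\binom{12}{2}$ pairwise intersections, tabulates the twelve ``extra'' intersection points, and checks by hand that these coincide with the twelve triple points of the dual Hesse line arrangement $\mathcal{L}_9$; transversality then follows from B\'ezout. You instead organise everything around the abstract $(9_4,12_3)$ Hesse structure on $\mathcal{P}_9\cong\FF_3^2$, identifying each conic with the complement of one of the twelve affine lines and deducing the ``$4$ common cusps vs.\ $3$ common cusps'' dichotomy from whether the corresponding affine lines meet or are parallel. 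This \emph{explains} why there are exactly twelve conics and why each has exactly two partners sharing only three cusps (the other two lines in its parallel class), where the paper merely exhibits these facts. Conversely, the paper's explicit equations are reused downstream (Section~\ref{sec:Nine-new-} and the Appendix), so its computation is not wasted.

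Two points to tighten. First, say explicitly that your ``Hesse lines'' are combinatorial lines on $\FF_3^2$, not lines in $\PP^2$; the paper stresses that no three cusps are collinear in $\PP^2$, and the phrasing ``each such line contains $3$ of the $9$ points'' invites confusion (also, it is twelve such lines, not nine). Second, the claim that the residual fourth intersection point of two ``parallel'' conics lands in $\mathcal{P}_{12}$ is not forced by the combinatorics alone --- the abstract parallel class says nothing about where that point sits in $\PP^2$. You rightly fall back on a Magma check of a representative pair (which is exactly what the paper does, for all pairs), so there is no gap, but the phrase ``tracking this through the dual-Hesse identification'' should not be over-sold as a proof. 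Finally, the assertion that $(\mathcal{P}_{12},\mathcal{L}_9)$ is itself the dual Hesse configuration is part of the theorem statement and deserves a sentence: it follows from projective duality, since each $L_j$ is dual to the $j$-th inflection point of $E_\lambda$, so the twelve Hesse lines through the inflection points dualise to the twelve points of $\mathcal{P}_{12}$.
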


\begin{proof}
By a computer search, the $12$ conics are: 
\[
\begin{array}{c}
C_{1,2,3,4,5,6}:\,\,x^{2}+(\l+1)(\omega xy+\omega^{2}xz+yz)+\omega^{2}y^{2}+\omega z^{2}=0,\hfill\\
C_{1,2,3,7,8,9}:\,\,x^{2}+(\l+1)(\omega^{2}xy+\omega xz+yz)+\omega y^{2}+\omega^{2}z^{2}=0,\hfill\\
C_{1,2,4,5,7,8}:\,\,xy-\l z^{2}=0,\hfill\\
C_{1,2,4,6,8,9}:\,\,x^{2}+(\omega\l+1)(xy+\omega xz+\omega yz)+y^{2}+\omega^{2}z^{2}=0,\hfill\\
C_{1,2,5,6,7,9}:\,\,x^{2}+(\omega^{2}\l+1)(xy+\omega^{2}xz+\omega^{2}yz)+y^{2}+\omega z^{2}=0,\hfill\\
C_{1,3,4,5,8,9}:\,\,x^{2}+(\omega\l+\omega^{2})(xy+yz+\omega xz)+\omega y^{2}+z^{2}=0,\hfill\\
C_{1,3,4,6,7,9}:\,\,-\l y^{2}+xz=0,\hfill\\
C_{1,3,5,6,7,8}:\,\,x^{2}+(\omega^{2}\l+\omega)(xy+yz+\omega^{2}xz)+\omega^{2}y^{2}+z^{2}=0,\hfill\\
C_{2,3,4,5,7,9}:\,\,x^{2}+(\l+\omega^{2})(xy+xz+\omega^{2}yz)+\omega y^{2}+\omega z^{2}=0,\hfill\\
C_{2,3,4,6,7,8}:\,\,x^{2}+(\l+\omega)(xy+xz+\omega yz)+\omega^{2}(y^{2}+z^{2})=0,\hfill\\
C_{2,3,5,6,8,9}:\,\,\l x^{2}-yz=0,\hfill\\
C_{4,5,6,7,8,9}:\,\,x^{2}+(\l+1)(xy+xz+yz)+y^{2}+z^{2}=0,\hfill
\end{array}
\]
where the index $i,j,\dots,n$ of the conic $C_{i,j,\dots,n}$ means
that this conic contains the $6$ points $p_{s},\,s\in\{i,j,\dots,n\}$.
It is easy to see that the points in $\mathcal{P}_{9}$ are in general
position: no line contains $3$ cusps, thus the conics are smooth.
From the data of the conics and the knowledge of the points in $\mathcal{P}_{9}$
they contain, one can check the assertions about the configuration
of the $12$ conics and the $9$ points. If one renumbers the $12$
conics by their order $C_{1},\dots,C_{12}$ from the top to bottom
of the above list, one obtains that the pair of (indexes of) conics
which have an intersection point not in $\mathcal{P}_{9}$ are 
\[
\begin{array}{c}
(1,2),\,(1,12),\,(2,12),\,(3,7),\,(3,11),\,(4,8),\\
(4,9),\,(5,6),\,(5,10),\,(6,10),\,(7,11),\,(8,9),
\end{array}
\]
and correspondingly, the $12$ points are 
\[
\begin{array}{c}
(1:1:1),\,(\omega:\omega^{2}:1),\,(\omega^{2}:\omega:1),\,(1:0:0),\,(0:1:0),\,(\omega^{2}:1:1),\\
(1:\omega^{2}:1),\,(\omega:1:1),\,(1:\omega:1),\,(\omega^{2}:\omega^{2}:1),\,(0:0:1),\,(\omega:\omega:1).
\end{array}
\]
respectively. One can check easily that these $12$ points in $\mathcal{P}_{12}$
are the intersection points of the lines in $\mathcal{L}_{9}$, which
lines form the dual Hesse arrangement (see Section \ref{subsec:The-Hesse,-dual}).
By Bézout's Theorem, the intersections between the conics are transverse. 
\end{proof}
Let $q\in\mathcal{P}_{9}$ and define $\mathcal{P}_{q}=\mathcal{P}_{9}\setminus\{q\}$. 
\begin{thm}
\label{thm:8_5Config} The set $\mathcal{C}_{q}$ of conics containing
the point $q$ has cardinality $8$.\\
 The set of points $\mathcal{P}_{q}$ and the set of conics $\mathcal{C}_{q}$
form an $8_{5}$-configuration: each point is on $5$ conics and each
conic contains $5$ of the points in $\mathcal{P}_{q}$. \\
 For each conic $C$ in $\mathcal{C}_{q}$ there exists a unique conic
$C'\in\mathcal{C}_{q}$ such that there is a unique point in the intersection
of $C$ and $C'$ which is not in $\mathcal{P}_{q}$. 
\end{thm}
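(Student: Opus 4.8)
The plan is to deduce the lemma from Theorem~\ref{thm:12Conics9Points} together with one combinatorial fact about the way the twelve conics avoid the points of $\mathcal{P}_{9}$. Two of the four assertions are immediate: since $(\mathcal{P}_{9},\mathcal{C}_{12})$ is a $(9_{8},12_{6})$-configuration, exactly $8$ of the $12$ conics pass through $q$, so $|\mathcal{C}_{q}|=8$; and each conic of $\mathcal{C}_{12}$ meets $\mathcal{P}_{9}$ in exactly $6$ points, one of which is $q$ when the conic lies in $\mathcal{C}_{q}$, so each conic of $\mathcal{C}_{q}$ contains exactly $5$ points of $\mathcal{P}_{q}$. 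What remains is to show that every $p\in\mathcal{P}_{q}$ lies on exactly $5$ conics of $\mathcal{C}_{q}$, and the statement about the distinguished partner $C'$.

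For a conic $C\in\mathcal{C}_{12}$ let $T_{C}=\mathcal{P}_{9}\setminus C$ be the $3$-element set of points it misses. Since each point of $\mathcal{P}_{9}$ lies on $8$ of the $12$ conics, inclusion--exclusion shows that a pair $\{p,p'\}\subset\mathcal{P}_{9}$ is contained in exactly $c-4$ of the sets $T_{C}$, where $c$ is the number of conics through both $p$ and $p'$. As the conics of $\mathcal{C}_{q}$ through a given $p\in\mathcal{P}_{q}$ are precisely the conics through both $p$ and $q$, the assertion ``$p$ lies on $5$ conics of $\mathcal{C}_{q}$ for all $p$'' is equivalent to $(\star)$: the twelve triples $T_{C}$ form a Steiner triple system $S(2,3,9)$ on $\mathcal{P}_{9}$, i.e.\ (since that system is unique up to isomorphism) the set of lines of an affine plane $\AA^{2}(\FF_{3})$ on the nine points. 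I would verify $(\star)$ directly from the explicit equations of the $12$ conics in the proof of Theorem~\ref{thm:12Conics9Points}: arranging $p_{1},\dots,p_{9}$ as the cells of a $3\times3$ grid, the twelve complementary triples are exactly the three rows, the three columns and the six broken diagonals. With $(\star)$ in hand, $c=5$ for every pair, and the $8_{5}$-configuration statement follows.

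For the last assertion I keep the $\AA^{2}(\FF_{3})$ picture and use B\'ezout and the transversality from Theorem~\ref{thm:12Conics9Points}. For distinct $C,C'\in\mathcal{C}_{12}$ one has $C\cap C'\cap\mathcal{P}_{9}=\mathcal{P}_{9}\setminus(T_{C}\cup T_{C'})$, of size $3$ if the lines $T_{C},T_{C'}$ are parallel and $4$ if they meet; since $C$ and $C'$ meet transversally in $4$ points and every intersection point lies in $\mathcal{P}_{9}\cup\mathcal{P}_{12}$, this means $C\cap C'$ contains a (then unique) point of $\mathcal{P}_{12}$ exactly when $T_{C}\parallel T_{C'}$, and $C\cap C'\subset\mathcal{P}_{9}$ otherwise. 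Now fix $C\in\mathcal{C}_{q}$. The parallel class of $T_{C}$ is a partition of $\mathcal{P}_{9}$ into three lines $T_{C},T',T''$; since $q\notin T_{C}$, the point $q$ lies in exactly one of $T',T''$, say $q\in T'$, and writing $C',C''$ for the conics with $T_{C'}=T'$, $T_{C''}=T''$ we get $q\in T_{C'}$, hence $C'\notin\mathcal{C}_{q}$, while $q\notin T_{C''}$, hence $C''\in\mathcal{C}_{q}$. Thus $C''$ is the unique conic of $\mathcal{C}_{q}$ such that $C\cap C''$ is not contained in $\mathcal{P}_{9}$; explicitly $C\cap C''=T'\cup\{r\}$ with $r\in\mathcal{P}_{12}$ and $q\in T'$, so $r$ is the only point of $C\cap C''$ outside $\mathcal{P}_{9}$ (equivalently, the only point besides the common point $q$ lying outside $\mathcal{P}_{q}$), while $C\cap C'''\subset\mathcal{P}_{9}$ for every other $C'''\in\mathcal{C}_{q}$. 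In particular the ``special pairs'' of Theorem~\ref{thm:12Conics9Points} restrict to a perfect matching of the eight conics of $\mathcal{C}_{q}$.

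The only input that is not purely formal is $(\star)$, the identification of the twelve complementary triples with the line set of $\AA^{2}(\FF_{3})$; once that is established the rest is bookkeeping with parallel classes, and $(\star)$ itself is immediate from the explicit list of conics (it can also be read off from the symmetries of the configuration). So the real work of the lemma is organising Theorem~\ref{thm:12Conics9Points} and its proof correctly, not any new computation.
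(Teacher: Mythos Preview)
Your argument is correct. The paper's own proof is a one-line appeal to the explicit list of conics and points in the proof of Theorem~\ref{thm:12Conics9Points} (``That can be checked directly from the data''), whereas you extract the structural reason: the twelve $3$-element complements $T_C=\mathcal{P}_9\setminus C$ are exactly the lines of an affine plane $\AA^2(\FF_3)$ on $\mathcal{P}_9$, and everything follows from that via inclusion--exclusion and the parallel-class partition. The one nontrivial input, your fact~$(\star)$, is indeed immediate from the index sets of the conics listed in the paper, so you are not using anything beyond what the paper's proof uses; you are just organising it. Your treatment of the last assertion is also a genuine gain: by reading ``meets in $3$ points of $\mathcal{P}_9$'' as ``$T_C\parallel T_{C'}$'' you see at once why the special pairs of Theorem~\ref{thm:12Conics9Points} restrict to a perfect matching on $\mathcal{C}_q$, which is precisely what the subsequent corollary needs.

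One small remark on the final clause. As stated in the paper the condition is ``a unique point of $C\cap C'$ not in $\mathcal{P}_q$'', but since any two conics of $\mathcal{C}_q$ already share $q\notin\mathcal{P}_q$, the literal reading singles out the \emph{wrong} partners. You noticed this and gave the intended interpretation (a unique intersection point outside $\mathcal{P}_9$, equivalently a second point outside $\mathcal{P}_q$); that is the reading compatible with Lemma~\ref{lem:2A2config} and the corollary, and your parallel-class argument proves exactly that. It would be worth flagging this explicitly rather than leaving it in a parenthetical.
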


\begin{proof}
That can be checked directly from the datas in the proof of Proposition
\ref{thm:12Conics9Points}. 
\end{proof}
Let $X_{\l}$ be the minimal desingularization of the double cover
branched over the sextic curve $C_{\l}$ with $9$ cusps. We denote
by $\eta:X_{\l}\to\PP^{2}$ the natural map and we denote by $A_{j},A_{j}'$
the two $\cu$-curves in $X$ above the point $p_{j}$ in $\mathcal{P}_{9}$.
The curves $A_{j},A_{j}'$, $j\in\{1,\dots,9\}$ form a $9\mathbf{A}_{2}$-configuration,
which we call the \textit{natural $9\mathbf{A}_{2}$-configuration}.
We have 
\begin{lem}
\label{lem:2A2config}The strict transform by the map $\eta:X_{\l}\to\PP^{2}$
of a conic $C\in\mathcal{C}_{12}$ is the union of two disjoint $\cu$-curves
$\t_{C},\t_{C}'$. \\
 Let $C,D$ be two conics in $\mathcal{C}_{12}$. Suppose that $C$
and $D$ meet in $4$ points in $\mathcal{P}_{9}$. Then the $\cu$-curves
$\t_{C},\t_{C}',\t_{D},\t_{D}'$ are disjoint. \\
 Suppose that $C$ and $D$ meet in $3$ points in $\mathcal{P}_{9}$.
Then, up to exchanging $\t_{D}$ and $\t_{D}'$, the curves $\t_{C},\t_{D},\t_{C}',\t_{D}'$
form a $2\mathbf{A}_{2}$-configuration. 
\end{lem}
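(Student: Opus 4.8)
The plan is to work out the intersection theory on the double cover $X_\l$ by pulling back divisor classes and using the ramification behaviour along the branch sextic $C_\l$. First I would record the key numerical facts: for a smooth conic $C\in\cC_{12}$, the preimage $\eta^{-1}(C)$ is a curve with arithmetic genus computed from $\eta^*C$; since $C$ meets the branch curve $C_\l$ only at the $6$ cusps it contains (this must be checked from the explicit equations, or deduced from Theorem~\ref{thm:12Conics9Points} together with B\'ezout: $C\cdot C_\l=12$, and at a cusp the local intersection multiplicity is $2$, accounting for all $12$), the double cover of $C\cong\PP^1$ branched at those points is disconnected if and only if the number of branch points is even --- and after resolving, $\eta^{-1}(C)$ splits. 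A cleaner way: the conic $C$ passes through $6$ cusps, and pulling back through the resolution, its strict transform $\strict{C}$ on the blow-up of $\PP^2$ at the cusps has even intersection with the (reduced) branch divisor, so the double cover restricted over $\strict{C}$ is an unramified double cover of a rational curve, hence two disjoint copies. Each copy has self-intersection $-2$ on $X_\l$ (it is rational; $K_{X_\l}=0$ forces $\t_C^2=-2$), giving the first assertion.

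Next, for two conics $C,D\in\cC_{12}$, I would compute $(\t_C+\t_C')\cdot(\t_D+\t_D') = \eta^*C\cdot\eta^*D$ away from the exceptional locus, and track the contributions at the common points. Here I use Theorem~\ref{thm:12Conics9Points}: $C$ and $D$ meet either in $4$ points of $\cP_9$ (transversally, no point of $\cP_{12}$), or in $3$ points of $\cP_9$ plus one point of $\cP_{12}$. The point of $\cP_{12}$ is \emph{not} a cusp of $C_\l$, so over it the double cover is \'etale and $C,D$ meeting transversally there contributes $2$ to $(\t_C+\t_C')\cdot(\t_D+\t_D')$. At a common cusp $p_j$, I would resolve: the two $\cu$-curves $A_j,A_j'$ sit over $p_j$, and $\t_C$ (resp.\ $\t_D$) meets exactly one of $A_j,A_j'$; the local picture of the $\mathbf A_2$ resolution tells me how $\t_C$ and $\t_D$ interact over $p_j$, namely they are disjoint there (they pass through the two \emph{different} branches, or through separated points of the exceptional chain). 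So the only positive contributions to $\sum \t_C^{(\prime)}\cdot\t_D^{(\prime)}$ come from points of $\cP_{12}$: total $0$ in the $4$-point case and $2$ in the $3$-point case. Combined with each individual curve being a smooth rational $\cu$-curve and the fact that $\t_C\cdot\t_C'=\t_D\cdot\t_D'=0$, this pins down the intersection matrix.

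In the $4$-point case, $\sum\t_C^{(\prime)}\cdot\t_D^{(\prime)}=0$ with all terms $\ge 0$ forces every cross-term to vanish: the four curves are pairwise disjoint. In the $3$-point case, the single point $r\in\cP_{12}$ lies on $C$ and $D$; its two preimages $r_1,r_2\in X_\l$ each lie on one branch of $\eta^{-1}(C)$ and one of $\eta^{-1}(D)$. After labelling, $r_1\in\t_C\cap\t_D$ and $r_2\in\t_C'\cap\t_D'$ (the labelling of $\t_D$ vs.\ $\t_D'$ is exactly the choice ``up to exchanging'' in the statement), while $\t_C\cap\t_D'=\t_C'\cap\t_D=\emptyset$. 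Hence $\t_C\cdot\t_D=\t_C'\cdot\t_D'=1$ and the remaining cross-terms are $0$; together with $\t_C^2=\t_C'^2=\t_D^2=\t_D'^2=-2$ and $\t_C\cdot\t_C'=\t_D\cdot\t_D'=0$ this is precisely the $2\mathbf A_2$ intersection matrix of Definition~2.2. I would also need to verify connectivity/irreducibility of each $\t_C$ (so that it really is a single $\cu$-curve and not, say, a nodal union), which again follows from the parity/\'etale argument above.

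The main obstacle is the local analysis over the cusps: one must be careful that the cusp $p_j$ of the sextic $C_\l$ is genuinely an $\mathbf A_2$ singularity of the double cover (equivalently, that $C_\l$ has an ordinary cusp with $L_j$ its tangent), that the conic $C$ is smooth at $p_j$ and transverse to $L_j$ there, and then to chase which component of the resolved exceptional $\mathbf A_2$-chain the strict transform of $C$ meets --- and to confirm that two such strict transforms ($\t_C$ and $\t_D$) through the \emph{same} cusp do not accidentally intersect on $X_\l$. This is a standard but delicate computation with the resolution of $z^2 = (\text{local equation of } C_\l)$; once it is done, everything else is bookkeeping with B\'ezout and the incidence data already established in Theorem~\ref{thm:12Conics9Points}. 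An alternative that sidesteps some of this: argue on the abelian surface / via the $\cu$-curve classes in $\calK_3$, but the direct double-cover computation is the most self-contained route given what has been set up.
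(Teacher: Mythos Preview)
Your proposal is correct and follows essentially the same route as the paper. The paper's proof is the explicit version of the ``local analysis over the cusps'' you flag as the main obstacle: it performs three blow-ups at each cusp so that the branch divisor becomes smooth and disjoint from the strict transform $\strict{C}$ of the conic, then takes the double cover (which is therefore \'etale over $\strict{C}$) and contracts back down to $X_\l$; your B\'ezout count $C\cdot C_\l=12=6\times 2$ is exactly what guarantees $\strict{C}$ misses the branch locus after these blow-ups, and from there both arguments finish identically.
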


\begin{proof}
Rather than performing a double cover and taking the resolution of
surface singularities, we perform one blow-ups at each cusp $q\in\mathcal{P}_{9}$
of $C_{\l}$, so that the branch locus is smooth and tangent to the
exceptional curve $E$ (see also Figure \ref{subsec:confOfConics}).
On the double cover, the reduced image inverse of the curve $E$ is
the union of two $\cu$-curves on the smooth K3 surface $X_{\l}$.\\
 By that local computation, we see that for $C\in\mathcal{C}_{12}$,
the curves $\t_{C},\t_{C'}$ are disjoint (the strict transform $\strict{C}$
of $C$ under the blow-up map do not meet the branch locus). \\
 Suppose $C$ and $D$ meet in $4$ points in $\mathcal{P}_{9}$.
The intersection being transverse, each strict transforms $\strict{C},\strict{D}$
of the curves $C,D$ under the $3$ blow-ups at each cusps is the
union of two disjoint curves not meeting the branch curve and the
$4$ curves in $\strict{C},\strict{D}$ are disjoint. \\
 If $C$ and $D$ meet in $3$ points in $\mathcal{P}_{9}$ then they
meet transversely at a unique point not in $\mathcal{P}_{9}$. Then
taking the above notations, we have this time $\strict{C}\strict{D}=1$,
so that the last assertion holds. 
\end{proof}
Let $\mathcal{P}_{q}$ and $\mathcal{C}_{q}$ as above. Using Theorem
\ref{thm:8_5Config} and Lemma \ref{lem:2A2config}, we get: 
\begin{cor}
The strict transform by $\eta$ of the $8$ conics in $\mathcal{C}_{q}$
forms an $8\mathbf{A}_{2}$-configuration. 
\end{cor}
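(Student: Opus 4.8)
The plan is to combine the combinatorial facts from Theorem~\ref{thm:8_5Config} with the local geometric analysis of Lemma~\ref{lem:2A2config}, so that essentially no new computation is needed. Fix $q\in\mathcal{P}_9$ and work with the $8$ conics in $\mathcal{C}_q$. For each such conic $C$, Lemma~\ref{lem:2A2config} tells us that its strict transform $\eta^{-1}$-image is a disjoint pair of $\cu$-curves $\t_C,\t_C'$; since $q\in C$, exactly one of these two curves meets $A_q$ and the other meets $A_q'$ (this is the content of the blow-up picture: above the cusp $q$, the strict transform $\strict C$ of $C$ is smooth and transverse to the exceptional configuration, so its two lifts are separated by the two $(-3)$-curves that descend to $A_q,A_q'$). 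So the first step is to record that the $16$ curves $\{\t_C,\t_C'\}_{C\in\mathcal{C}_q}$, together with the pair $A_q,A_q'$, naturally organize into $8$ candidate $\mathbf{A}_2$-configurations, each of the form $\t_C,A_q,\t_C',A_q'$ if one wants to exhibit $A_q$ as the "hinge" — but in fact the cleaner statement, and the one matching the Corollary, is that the $8$ pairs $\{\t_C,\t_C'\}$ are to be assembled into $4$ genuine $\mathbf{A}_2$-configurations using the pairing supplied by the last sentence of Theorem~\ref{thm:8_5Config}.

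Concretely, the second step invokes Theorem~\ref{thm:8_5Config}: the relation "$C'$ is the unique conic of $\mathcal{C}_q$ meeting $C$ at a point outside $\mathcal{P}_q$" is a fixed-point-free involution on the $8$-element set $\mathcal{C}_q$, hence partitions it into $4$ pairs $\{C,C'\}$. For each such pair, $C$ and $D:=C'$ meet in exactly $3$ points of $\mathcal{P}_9$ (all lying in $\mathcal{P}_q$, since the non-$\mathcal{P}_9$ intersection point accounts for one of the four Bézout intersections, but wait — more carefully: two smooth conics meet in $4$ points, and here $3$ of them lie in $\mathcal{P}_9$ and the fourth in $\mathcal{P}_{12}$, by the last assertion of Theorem~\ref{thm:12Conics9Points}). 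So the hypothesis "$C$ and $D$ meet in $3$ points in $\mathcal{P}_9$" of Lemma~\ref{lem:2A2config} is satisfied, and that lemma gives, after possibly relabeling $\t_D\leftrightarrow\t_D'$, that $\t_C,\t_D,\t_C',\t_D'$ form a $2\mathbf{A}_2$-configuration. Doing this for each of the $4$ pairs produces $4$ such $2\mathbf{A}_2$-configurations, i.e.\ $8$ $\mathbf{A}_2$-configurations in total.

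The third and final step is to check that these $8$ $\mathbf{A}_2$-configurations are pairwise disjoint, which is what upgrades them to an $8\mathbf{A}_2$-configuration. Two conics $C,E\in\mathcal{C}_q$ that are \emph{not} matched by the involution meet at $4$ points of $\mathcal{P}_9$ (again using Theorem~\ref{thm:12Conics9Points}: a given conic meets exactly two others at a point of $\mathcal{P}_{12}$, and within $\mathcal{C}_q$ the involution picks out precisely the relevant one). Hence Lemma~\ref{lem:2A2config} (the $4$-point case) gives that $\t_C,\t_C',\t_E,\t_E'$ are mutually disjoint. Since any two $\mathbf{A}_2$-configurations coming from distinct pairs $\{C,C'\},\{E,E'\}$ involve conics that are unmatched across the pairs, all four pairs are mutually disjoint, and we conclude. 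I expect the only genuinely delicate point to be the bookkeeping in step~3 — namely verifying that "unmatched in $\mathcal{C}_q$" really does force the two conics to meet in $4$ points of $\mathcal{P}_9$ rather than at a point of $\mathcal{P}_{12}$; this follows because each conic lies in exactly two of the $\mathcal{P}_{12}$-incident pairs listed in Theorem~\ref{thm:12Conics9Points}, and one checks that at most one of those two partner-conics also passes through $q$ (equivalently, the two $\mathcal{P}_{12}$-points on a given conic are not both "visible" from the same $q$), so the involution on $\mathcal{C}_q$ is exactly the restriction of the global $\mathcal{P}_{12}$-pairing. This last verification is the combinatorial heart of the argument; everything else is a direct application of the two preceding results.
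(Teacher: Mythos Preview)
Your proposal is correct and is exactly the argument the paper intends: the Corollary is stated immediately after Theorem~\ref{thm:8_5Config} and Lemma~\ref{lem:2A2config} with no further proof beyond invoking those two results, and your steps~2 and~3 are precisely the unpacking of that invocation. Two minor remarks: the digression in your first paragraph about $A_q,A_q'$ acting as a ``hinge'' is irrelevant to the Corollary (those curves are not part of the $8\mathbf{A}_2$-configuration) and should be deleted; and the combinatorial verification you flag at the end of step~3 is already the content of the last sentence of Theorem~\ref{thm:8_5Config}, so no extra work is needed there.
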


For each point $q=p_{j},$ $j\in\{1,\dots,9\}$, we denote by $\mathcal{A}_{j}'$
the corresponding $8\mathbf{A}_{2}$-configuration on $X_{\l}$. In
order to obtain new $9{\bf A}_{2}$-configurations, one needs to find
other $\mathbf{A}_{2}$-configurations. This will be done in the next
section by using singular quartics instead of conics. 
\begin{rem}
Using a computer, we found eight $8{\bf A}_{2}$-configurations in
the set of $32$ $\cu$-curves which is the union of the two $8\mathbf{A}_{2}$-configurations
$\mathcal{A}_{j}'$ and $\{A_{1},A_{1}',\dots,A_{9},A_{9}'\}\setminus\{A_{j},A_{j}'\}$.
However one can compute that the orthogonal complement of $6$ of
them are lattices with no $\cu$-classes, thus one cannot complete
these $6$ configurations into $9\mathbf{A}_{2}$-configurations.
The $32$ $\cu$-curves can be realized as lines in a projective model
of $X_{\l}$, see Proposition \ref{prop:degree8Model}. 
\end{rem}

\subsection{\label{subsec:-9new-9A2-configurations}Nine new $9\mathbf{A}_{2}$-configurations }

Let $p_{j}\in\mathcal{P}_{9}$ be one of the $9$ cusp singularities
of the sextic $C_{\l}$. 
\begin{thm}
There exists a unique quartic curve $Q_{j}$ passing through the $9$
points in $\mathcal{P}_{9}$, with a unique singularity at the point
$p_{j}$. The singularity has multiplicity $3$, it is of type ${\bf D}_{5}$:
it has two tangents, one branch is smooth while the other branch is
a cuspidal singularity. The tangent at the cusp of $Q_{j}$ is also
the tangent to the cuspidal singularity of the sextic $C_{\l}$ at
$p_{j}$. \\
 The curve $Q_{j}$ has geometric genus $0$. Its strict transform
denoted by $\eta$ on $X_{\l}$ is the union of two $\cu$-curves
$\g_{j},\g_{j}'$ which form an $\mathbf{A}_{2}$-configuration. The
curves $\g_{j},\g_{j}'$ and the $16$ curves in $\mathcal{A}_{j}'$
form a $9\mathbf{A}_{2}$-configuration $\mathcal{A}_{j}$. 
\end{thm}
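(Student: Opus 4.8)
The plan is to split the statement into three parts: (a) existence and uniqueness of the quartic $Q_j$ with the stated singularity type, (b) the genus computation and the description of its strict transform as an $\mathbf{A}_2$-configuration $\g_j,\g_j'$, and (c) the verification that $\{\g_j,\g_j'\}$ together with the sixteen curves of $\mathcal{A}_j'$ form a $9\mathbf{A}_2$-configuration.

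For part (a), I would work in the linear system of plane quartics, which has projective dimension $14$. Requiring passage through the eight points of $\mathcal{P}_q=\mathcal{P}_9\setminus\{p_j\}$ imposes (at most) $8$ conditions, and requiring a point of multiplicity $\ge 3$ at $p_j$ imposes $\binom{3+1}{2}=6$ conditions; these $14$ conditions are generically independent, leaving a unique quartic. One then must check, by the explicit coordinates of $\mathcal{P}_9$ given in Section~\ref{subsec:confOfConics}, that these $14$ conditions are genuinely independent for $\l\notin\{1,\omega,\omega^2\}$, so that $Q_j$ exists and is unique; by the $S_3 \times (\ZZ/3\ZZ)^2$-symmetry permuting the $p_j$ it suffices to do this for one value of $j$, say $j=1$. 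A triple point on a quartic forces the residual to be a line, so $Q_j$ is a rational curve (a cubic through the triple point would give a quartic; more precisely projecting from $p_j$ exhibits $Q_j$ as a $(4-3)=1$-to-one image of $\PP^1$), hence has geometric genus $0$; this also shows $Q_j$ can have no further singularity (a quartic with a triple point and another singular point would be reducible or have a line component, contradicting irreducibility, which again follows from the explicit equation). The precise analytic type of the triple point — that it is $\mathbf{D}_5$, i.e. one smooth branch plus one cuspidal branch sharing a tangent, that tangent being $L_j$ — I would read off directly from the Taylor expansion of the explicit equation of $Q_1$ at $p_1$, or equivalently from the blow-up: the cubic part of the local equation factors as (linear)$\cdot$(quadratic) with the quadratic a perfect square, which is exactly the $\mathbf{D}_5$ signature, and one identifies the repeated tangent with $L_1$ using that $L_1$ is the cuspidal tangent of $C_\l$ at $p_1$ recorded in Section~\ref{subsec:confOfConics}.

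For part (b), I run the same local resolution as in Lemma~\ref{lem:2A2config}: three blow-ups at each cusp of $C_\l$, producing the double cover $X_\l \to \PP^2$. Away from $\mathcal{P}_9$ the curve $Q_j$ does not meet the branch locus, so its preimage splits into two components; the key local point is at $p_j$ itself. There $Q_j$ has a triple point of type $\mathbf{D}_5$ whose cuspidal tangent coincides with the branch tangent $L_j$. Tracking the strict transform of $Q_j$ through the three blow-ups and through the double cover, I compute its self-intersection and the intersection of its two components on $X_\l$: the genus-$0$ plus the local multiplicities give that each component is a $\cu$-curve and that the two components meet in exactly one point, i.e. form an $\mathbf{A}_2$-configuration. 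Concretely, $2g(Q_j)-2 = Q_j^2 + K_{\PP^2}Q_j$ with the contributions of the triple point (a $\delta$-invariant of $3$ for $\mathbf{D}_5$... I would instead argue directly on $X_\l$) determines the numerical class; the cleanest route is: the total transform of $Q_j$ on $X_\l$ has arithmetic genus computable from $\deg=4$, triple point, and the double cover formula, and splitting into two rational curves forces each to be $\cu$ and their product to be $1$.

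For part (c), I must show the new pair $\{\g_j,\g_j'\}$ is disjoint from every curve in $\mathcal{A}_j'$, which is the $8\mathbf{A}_2$-configuration coming from the eight conics through $p_j$. The curves of $\mathcal{A}_j'$ lie over those conics, whose points of $\mathcal{P}_9$ are the eight points of $\mathcal{P}_q$; the curve $Q_j$ also passes through exactly those same eight points, with $p_j$ now being a triple point of $Q_j$ rather than a point it shares with the conics in a configuration sense. I would show that at each point $p_i$ ($i\ne j$) the branches of $Q_j$ and of the relevant conic are transverse and meet the branch tangent $L_i$ generically, so that after the three blow-ups and the double cover their strict transforms on $X_\l$ become disjoint — the same mechanism as the "$4$ points of $\mathcal{P}_9$" case in Lemma~\ref{lem:2A2config}, where transverse passage through a common cusp of $C_\l$ separates the lifted curves. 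Away from $\mathcal{P}_9 \cup \mathcal{P}_{12}$ the curve $Q_j$ and the conics can be checked to be disjoint (or to meet only at points not affecting the lifts), again from the explicit equations. The main obstacle is exactly this last step: controlling, at each of the eight points $p_i$, the precise configuration of branches of $Q_j$, of the two conics of $\mathcal{C}_q$ through $p_i$, and of the branch tangent $L_i$, finely enough to conclude disjointness of the lifts — it requires the local analysis of Lemma~\ref{lem:2A2config} carried out simultaneously for a conic and for one branch of the $\mathbf{D}_5$ point, and I expect this to be handled by an explicit computation (e.g. in Magma) at the given coordinates, reduced to a single $i$ and single $j$ by the symmetry group of the configuration.
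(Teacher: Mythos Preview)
Your overall plan is sound and tracks the paper's proof closely: both proceed by exhibiting the quartic explicitly (the paper simply records the equations found with Rito's \texttt{LinSys} in the Appendix, whereas you propose a dimension count plus a verification), reading off the $\mathbf{D}_5$ structure from the local equation, and then analysing the double cover locally at $p_j$ to see the $\mathbf{A}_2$. Where you diverge significantly is in part (c). You anticipate a delicate local analysis at each $p_i$ ($i\neq j$) to show that the lifts of $Q_j$ and of the eight conics through $p_j$ are disjoint on $X_\lambda$, and you flag this as ``the main obstacle''. The paper bypasses this entirely with B\'ezout: a conic $C\in\mathcal{C}_q$ meets $Q_j$ with total multiplicity $4\cdot 2=8$; since $C$ passes through $p_j$ (multiplicity $\ge 3\cdot 1$) and through five further points of $\mathcal{P}_9$ on $Q_j$ (each contributing $\ge 1$), the lower bound $3+5=8$ is already saturated. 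Hence every intersection lies in $\mathcal{P}_9$ and has the minimal possible local multiplicity, so no branch of $Q_j$ is tangent to $C$ anywhere; the separation of the strict transforms on $X_\lambda$ then follows exactly as in the ``four points in $\mathcal{P}_9$'' case of Lemma~\ref{lem:2A2config}. This replaces your proposed Magma verification by a two–line count.

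One small correction in (c): you write that $p_j$ is ``a triple point of $Q_j$ rather than a point it shares with the conics''. In fact every conic of $\mathcal{C}_q$ does pass through $p_j=q$; the B\'ezout count above uses precisely that shared point, with local multiplicity exactly $3$. Also, in (b) your claim that the preimage of $Q_j$ splits because $Q_j$ misses the branch locus away from $\mathcal{P}_9$ is correct but itself needs B\'ezout (now against the sextic $C_\lambda$): the contributions at the eight simple points and at the $\mathbf{D}_5$ point, whose cuspidal branch shares the tangent $L_j$ with the cusp of $C_\lambda$, already account for all $24$ intersection points, so the strict transform of $Q_j$ on the blown-up plane is disjoint from the smooth branch curve and the cover is \'etale over it. The paper encodes this local picture in Figure~\ref{figniceDiag} rather than spelling it out.
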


\begin{proof}
We give in the Appendix the equations of the $9$ curves $Q_{j},$
$j\in\{1,\dots,9\}$. These curves have been constructed using the
\verb"LinSys" program by C. Rito which enables to find curves of
given degree with prescribed singularities and given tangencies at
a set of points in the plane. Conversely, one can check that the singularity
of $Q_{j}$ at $p_{j}$ has multiplicity $3$, is resolved by one
blow-up, with the exceptional divisor meeting the strict transform
in two points, one of multiplicity $2$.\\
 The curve $Q_{j}$ has genus $0$, (see e.g.~\cite[Chapter 4, Section 2]{GH}).
By Bézout's Theorem, the intersections of the quartic $Q_{j}$ with
the $8$ conics in $\mathcal{C}_{12}$ that contain $p_{j}$ are transverse,
so that the curves in $\mathcal{A}_{j}'$ are disjoint from $\g_{j},\g_{j}'$
and we thus get a $9\mathbf{A}_{2}$-configuration. See Figure \ref{subsec:confOfConics}
for the behavior of the quartic curve $Q_{j}$ under the double cover. 
\end{proof}
\begin{figure}[h]
\label{figniceDiag} \caption{Behavior of the quartic $Q_{j}$ under the double cover}

\centering{}
\begin{tikzpicture}[scale=0.9]

\draw (1.8,-0.3) node {\small blow-up};
\draw (4.3,-0.3) node {\small 2:1 cover};
\draw (8.2,0) node {$X_\lambda$};
\draw (-1,0) node {$\mathbb{P}^2$};

\draw [color=blue] [domain=0:1] plot (\x, \x^3);
\draw [color=blue] [domain=0:1] plot (\x, -\x^3);
\draw [very thick] [domain=-0.82:0] plot (\x, 2*\x^3);
\draw [very thick] [domain=-0.85:0] plot (\x, -2*\x^3);
\draw [color=blue] (1,1) arc (0:180:0.5) ;
\draw [color=blue] (0,-1) -- (0,1);
\draw (0.75,-1) node {$Q_j$};
\draw (-0.48,-1) node {$C_\lambda$};

\draw [<-] (1.5,0) -- (2.2,0);
\draw [color=blue] [domain=0:1.05] plot (\x/2+3, sqrt abs \x);
\draw [color=blue] [domain=0:1.8] plot (\x/2+3, -sqrt abs \x);
\draw [very thick] [domain=-1.8:0] plot (\x/2+3, sqrt abs \x);
\draw [very thick] [domain=-1.8:0] plot (\x/2+3, -sqrt abs \x);
\draw (3,-1.3) -- (3,1.9);
\draw [color=blue] (3.5,1) arc (-35:145:0.5) ;
\draw (3.25,-1.2) node {-1};
\draw (2.75,-1.2) node {$E$};

\draw [->] (3.9,0) -- (4.6,0);

\draw [very thick] (0.8+4.7,-2.2+3.5) -- (0.8+4.7,-4.5+3.5);
\draw (0.2+4.7,-3.8+3.5) -- (2.8+4.7,-2.5+3.5);
\draw (0.2+4.7,-3.2+3.5) -- (2.8+4.7,-4.5+3.5);
\draw [color=blue] (0.4+4.7,-3.1+3.5) -- (1.6+4.7,-4.3+3.5);
\draw [color=blue] (0.4+4.7,-3.9+3.5) -- (1.6+4.7,-2.7+3.5);
\draw [color=blue] (1.6+4.7,-2.7+3.5) arc (135:18:0.65) ;
\draw [color=blue] (1.6+4.7,-4.3+3.5) arc (360-135:360-20:0.65) ;
\draw (2.9+4.7,-2.4+3.5) node {$-2$};
\draw (2.9+4.7,-4.6+3.5) node {$-2$};

\draw (2.7+4.7,-3.9+3.5) node {$\gamma_j$};
\draw (2.7+4.7,-3.2+3.5) node {$\gamma_j'$};
\end{tikzpicture}
\end{figure}
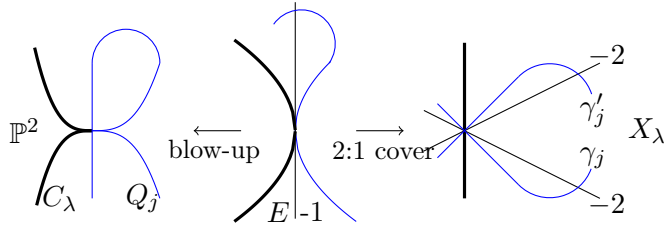

\begin{rem}
In Section \ref{subsec:A-Hessian-model}, we study some automorphisms
of $X_{\l}$. We obtain that the nine above $9{\bf A}_{2}$-configurations
are in the same orbit under the action by $3$-torsion of the Mordell-Weil
group of a fibration of $X_{\l}$. 
\end{rem}

\section{Projective, Hessian and Weierstrass models of $X_{\protect\l}$}

\subsection{\label{subsec:NaturalEllFib}The natural elliptic fibration of $X_{\protect\l}$}

Let $D_{2}$ be the big and nef divisor on $X_{\l}$ which is the
pull back of a line in $\PP^{2}$. For $\l$ generic, the divisors
$D_{2},A_{1},A_{1}'\dots,A_{9},A_{9}'$ form a $\QQ$-basis $\mathcal{B}_{\QQ}$
of $\NS X_{\l})_{/\QQ}$; they generate an index $3^{6}$ sub-lattice
of $\NS X_{\l})$ (see also Lemma \ref{LEMMA:The-NS-latt-Discri54}).

Let $\mu:Y_{\l}\to\PP^{2}$ be the blow-up of the plane at the $9$
cusps of the sextic curve $C_{\l}$ and let $E_{1},\dots,E_{9}$ be
the exceptional curves over $p_{1},\dots,p_{9}$. The strict transform
by $\mu$ of the curve $C_{\l}$ is the smooth genus $1$ curve 
\[
\strict{C}_{\l}=\mu^{*}C_{\l}-2{\textstyle \sum_{j=1}^{9}}E_{i},\text{ such that }\strict{C}_{\l}^{2}=0.
\]
The surface $X_{\l}$ is the double cover of $Y_{\l}$ branched over
$\bar{C_{\l}}$; we denote by 
\[
\eta':X_{\l}\to Y_{\l}
\]
the double cover morphism (so that $\eta'^{*}E_{j}=A_{j}+A_{j}'$)
and by $F_{o}$ the ramification locus, so that $2F_{o}=\eta^{'*}\bar{C}_{\l}$.
Since $2F_{o}=\eta'^{*}\bar{C_{\l}}\equiv6D_{2}-2\sum_{j=1}^{9}(A_{j}+A_{j}')$,
we get 
\[
F_{o}\equiv3D_{2}-\left({\textstyle \sum_{j=1}^{9}}A_{j}+A_{j}'\right).
\]
Let $L\hookrightarrow\PP^{2}$ be a line; the curve $C_{\l}$ belongs
to the linear system 
\[
\d=|6L-2{\textstyle \sum_{j=1}^{9}}p_{j}|
\]
of sextic curves with a double point at points in $\mathcal{P}_{9}$.
One computes that this linear system is $1$ dimensional. Moreover
there exists a unique cubic curve $\Ca(\l)$ (called the Cayleyan
curve, see \cite{AD}) that contains the $9$ points in $\mathcal{P}_{9}$,
which is 
\begin{equation}
\Ca(\l):\,\,x^{3}+y^{3}+z^{3}-\tfrac{(\l^{3}+2)}{\l}xyz=0,\label{eq:cayleysian}
\end{equation}
so that $2\Ca(\l)\in\d$. The linear system $\d$ lifts to a base
point free linear system $\d'$ on $Y_{\l}$ with $\bar{C}_{\l}\in\d'.$
The linear system $\d'$ defines a morphism $\varphi':Y_{\l}\to\PP^{1}$
and induces an elliptic fibration 
\[
\varphi:X_{\l}\to\PP^{1}
\]
for which $F_{o}$ is a fiber, and which we call the \textit{natural
fibration}.

Let $p,q$ be the images by $\varphi$ of the strict transforms in
$X_{\l}$ of $\Ca(\l)$ and $C_{\l}$. In fact the surface $X_{\l}$
is the fiber product of the fibration $\varphi'$ and the quadratic
transformation $\PP^{1}\to\PP^{1}$ branched at $p,q$. Indeed both
maps $X_{\l}\to Y_{\l}$ and $Y_{\l}\times_{\PP^{1}}\PP^{1}$ has
the same branch locus in the rational surface $Y_{\l}$. \\
 The curves $A_{1},A_{1}',\dots,A_{9},A_{9}'$ are sections of $\varphi$,
and one can check that the curves $\g_{1},\g_{1}',\dots,\g_{9},\g'_{9}$
are also sections (see Figure \ref{subsec:confOfConics}). 
\begin{thm}
\label{Thm:24InTheFibers} The fibration $\varphi$ contracts the
$24$ $\cu$-curves $\varTheta_{j}$, $j\in\{1,\dots,24\}$ which
are above the $12$ conics $\mathcal{C}_{12}$. The singular fibers
of $\varphi$ are $8$ fibers of type $\tilde{\mathbf{A}}_{2}$, each
singular fiber is the union of $3$ curves $\varTheta_{j}$. For $\l$
generic, the fibration $\varphi$ has fibers with non-constant moduli
and the Mordell-Weil group of the fibration $\varphi$ is $\ZZ\times(\ZZ/3\ZZ)^{2}$. 
\end{thm}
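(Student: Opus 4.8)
The plan is to analyze the elliptic fibration $\varphi\colon X_\l\to\PP^1$ by combining the explicit geometry of the conics in $\mathcal{C}_{12}$ (Theorem \ref{thm:12Conics9Points}, Lemma \ref{lem:2A2config}) with the lattice-theoretic data for $\NS(X_\l)$ from Proposition \ref{prop:The-NS-latt-Discri54}, and finally with the Shioda--Tate formula to pin down the Mordell--Weil group. First I would establish that each of the $24$ curves $\varTheta_j$ lying above the $12$ conics of $\mathcal{C}_{12}$ is contracted by $\varphi$. The fiber class is $F_o\equiv 3D_2-\sum_{j=1}^9(A_j+A_j')$, so it suffices to compute $\varTheta_j\cdot F_o$: since $\varTheta_j$ is one component of the strict transform of a conic, $\eta_*\varTheta_j$ is a conic meeting $C_\l$ only at the $6$ cusps it passes through (by B\'ezout: a conic and the sextic meet in $12$ points, and a transverse cusp of $C_\l$ absorbs intersection multiplicity $2\cdot\!$(local mult.) correctly once one checks the conic is not tangent to a branch — which is exactly the transversality recorded in Theorem \ref{thm:12Conics9Points}), so the projection formula gives $\varTheta_j\cdot F_o = 0$. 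Hence each $\varTheta_j$ is vertical. Grouping them: by Theorem \ref{thm:12Conics9Points}, each conic $C$ meets exactly two others in a point of $\mathcal{P}_{12}$, giving $CC'=1$ upstairs for the appropriate components; from Lemma \ref{lem:2A2config} the remaining components have zero intersection. Tracking this incidence (the $12$ conics split into blocks of $3$ mutually meeting once at $\mathcal{P}_{12}$-points — a consequence of the dual Hesse structure on $(\mathcal{P}_{12},\mathcal{L}_9)$) produces, for a suitable pairing of the $24$ curves, exactly $8$ triangles $\varTheta_a+\varTheta_b+\varTheta_c$ each with intersection graph a cycle $\tilde{\mathbf{A}}_2$; each such triangle has self-intersection $0$ and meets $F_o$ in $0$, so it is a fiber, of Kodaira type $I_3$ (type $\tilde{\mathbf{A}}_2$).

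Next I would show these are all the reducible fibers and extract the Mordell--Weil group. The sections $A_1,A_1',\dots,A_9,A_9'$ (and the $\t_j,\t_j'$) being known, the fibration has a section, so Shioda--Tate applies: $\rho(X_\l)=2+\sum_v(m_v-1)+\operatorname{rk}\mathrm{MW}(\varphi)$, where the sum runs over reducible fibers with $m_v$ components. With $8$ fibers of type $\tilde{\mathbf{A}}_2$ we get $\sum(m_v-1)=16$, and assuming (as in Proposition \ref{prop:The-NS-latt-Discri54}) the generic value $\rho(X_\l)=19$, this forces $\operatorname{rk}\mathrm{MW}=19-2-16=1$ and simultaneously shows there are no further reducible fibers. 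Non-constancy of the $j$-invariant for generic $\l$ follows because $X_\l$ is the fiber product of $\varphi'\colon Y_\l\to\PP^1$ with the double cover of $\PP^1$ branched at the points $p,q$ (the images of $\mathrm{Ca}(\l)$ and $C_\l$), as explained in Section \ref{subsec:NaturalEllFib}; since the Hesse pencil $\varphi'$ already has varying moduli, so does $\varphi$ — concretely one may just quote the Hessian model $E_{K3}$ of Theorem \ref{thm:hessianmodel} (the $t$-dependence of its parameter is non-constant), but I would prefer to argue it directly from the fiber-product description to keep the present section self-contained.

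For the torsion, the point is that each $I_3$ fiber contributes a $\ZZ/3\ZZ$ to the discriminant group of the trivial lattice $T_{\mathrm{triv}}$, and the image of $\mathrm{MW}(\varphi)_{\mathrm{tors}}$ in $\bigoplus_v (\text{fiber discriminant group})$ is injective; combined with $\NS(X_\l)=\ZZ D_2\oplus\calK_3$ having discriminant group $\ZZ/2\ZZ\oplus(\ZZ/3\ZZ)^3$ of order $54$ (Proposition \ref{prop:The-NS-latt-Discri54}), one computes $|\mathrm{MW}_{\mathrm{tors}}|^2 = \big(\prod_v 3\big)\cdot\det(T_{\mathrm{free}})/\bigl(|{\det\NS}|\cdot(\text{free MW contribution})\bigr)$ via the standard Shioda height-pairing/discriminant relation $|\mathrm{MW}_{\mathrm{tors}}|^2\cdot|{\det\NS}| = |{\det T_{\mathrm{triv}}}|\cdot|{\det(\text{MW lattice})}|$. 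Running the numbers with $\det$ of the $\tilde{\mathbf{A}}_2$-lattice equal to $3$, eight fibers, rank-$2$ unimodular summand, and $|{\det\NS}|=54$, forces $|\mathrm{MW}_{\mathrm{tors}}| = 9$, and since $3$-torsion is visible from the $\tilde{\mathbf{A}}_2$ fibers (one can exhibit explicit torsion sections among the $A_j,A_j',\t_j,\t_j'$ by reading off which component of each triangle they hit, producing two independent order-$3$ sections) the torsion subgroup is $(\ZZ/3\ZZ)^2$. Hence $\mathrm{MW}(\varphi)\cong\ZZ\times(\ZZ/3\ZZ)^2$. The main obstacle I anticipate is the bookkeeping in the second step: correctly matching the $24$ curves $\varTheta_j$ into the $8$ triangles using the dual Hesse incidence of $(\mathcal{P}_{12},\mathcal{L}_9)$, and verifying — rather than merely assuming — that there are no additional reducible fibers; the cleanest route is to let Shioda--Tate with $\rho=19$ do that work for us, which is why I would front-load the Picard-number hypothesis exactly as Proposition \ref{prop:The-NS-latt-Discri54} does.
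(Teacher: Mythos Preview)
Your outline for the first half is sound and close to the paper's: you correctly show $\varTheta_j\cdot F_o=0$ and group the $24$ curves into $8$ triangles of type $\tilde{\mathbf{A}}_2$ via the dual--Hesse incidence (the paper does this more directly by observing that four explicit unions of three conics each lie in the pencil $\delta$, and that these pull back to eight $\tilde{\mathbf{A}}_2$-fibers on $X_\lambda$). Two points, however, need repair.

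First, Shioda--Tate with $\rho=19$ only rules out further \emph{reducible} fibers; the theorem asserts these eight are \emph{all} singular fibers. The paper closes this with the Euler characteristic: each $I_3$ contributes $3$ to $e(X_\lambda)=24$, and $8\cdot 3=24$ leaves no room for any further singular fiber (not even an $I_1$ or a cusp $II$). You should add this step.

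Second, and more seriously, your torsion argument via the determinant formula is incomplete. The relation
\[
|\det\NS X_\lambda)|\cdot|\mathrm{MW}_{\mathrm{tors}}|^2=|\det T_{\mathrm{triv}}|\cdot\det(\mathrm{MWL})
\]
reads $54\cdot|\mathrm{MW}_{\mathrm{tors}}|^2=3^8\cdot\det(\mathrm{MWL})$, and you cannot ``run the numbers'' to force $|\mathrm{MW}_{\mathrm{tors}}|=9$ without knowing $\det(\mathrm{MWL})$. That determinant is only computed \emph{later} in the paper (it equals $\tfrac{2}{3}$, obtained by exhibiting $P_1$ as a generator and computing its canonical height), so at this stage your argument is circular. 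The paper sidesteps the issue entirely by quoting Shimada's classification of elliptic K3 fibrations: the configuration of eight $\tilde{\mathbf{A}}_2$ fibers with Mordell--Weil rank $1$ is case $2373$ in Shimada's table, where the torsion is recorded as $(\ZZ/3\ZZ)^2$. If you want a self-contained argument instead, you must actually carry out the height-zero computation you allude to (e.g.\ show that $A_2$, with zero section $A_1$, meets a non-identity component in exactly six of the eight $I_3$ fibers, so that $\langle A_2,A_2\rangle=4-6\cdot\tfrac{2}{3}=0$), rather than invoke the determinant formula.

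One minor correction: $\varphi'$ is not ``the Hesse pencil''. Its fibers are $9$-nodal sextics through $\mathcal{P}_9$, and there is a \emph{unique} cubic (the Cayleyan $\Ca(\l)$) through those nine points, not a pencil of cubics. For non-constancy of the $j$-invariant the paper simply exhibits two explicit smooth fibers of $\varphi$ --- one isomorphic to $E_\lambda$, the other to (a curve isogenous to) $\Ca(\lambda)$ --- and notes that their $j$-invariants differ for generic $\lambda$.
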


\begin{proof}
The following four sextic curves 
\[
\begin{array}{c}
C_{123456}+C_{123789}+C_{456789},\,C_{124578}+C_{134679}+C_{235689},\\
C_{124689}+C_{135678}+C_{234579},\,C_{125679}+C_{134589}+C_{234678},
\end{array}
\]
belong to the linear system $\d$ of sextic curves that have multiplicity
$2$ at the points in $\mathcal{P}_{9}$; actually their singularities
are nodes. By the results in the proof of Theorem \ref{thm:12Conics9Points},
the strict transform to $Y_{\l}$ of the above $4$ sextic form $4$
fibers of type $\tilde{\mathbf{A}}_{2}$, which lies in the étale
locus of $\eta$. Their strict transform on $X_{\l}$ is therefore
the union of eight fibers of type $\tilde{\mathbf{A}}_{2}$. A fiber
of type $\tilde{\mathbf{A}}_{2}$ contributes to $3$ in the Euler
characteristic of $X_{\l}$, which is equal to $24$. Since there
are $8\tilde{\mathbf{A}}_{2}$ singular fibers, the fibration has
no other singular fibers. The $24$ curves $\varTheta_{j}$ above
the $12$ conics are in the fibers, thus are contracted by $\varphi$.

 The strict transform $\He(\l)$ on $X_{\l}$ of $\Ca(\l)$ is smooth,
of genus $1$ (we will see that this is the Hessian of the curve $E_{\l}$,
thus the notation; see also Remark \ref{rem:doublecoverCayleysian}).
Since $\He(\l)\cdot F_{o}=0$, we have that $\He(\l)\equiv F_{o}$.
The curve $F_{o}$ is isomorphic to $E_{\l}$. For generic $\l$,
the curves $\Ca(\l)$ and $E_{\l}$ have distinct $j$-invariants,
thus the fibers of $\varphi$ have a non-constant moduli. Since the
fibration is not isotrivial, results of Shioda (see \cite[Corollary 1.5]{Shioda})
apply and tell that the Mordell-Weil group of sections of $\varphi:X_{\l}\to\PP^{1}$
has rank $1=19-(2+8(3-1))$.

In fact, elliptic fibrations of K3 surfaces are classified by Shimada
in \cite{Shimada}. A table with the $3278$ possible cases is available
in \cite{Shimada1}. Our fibration is case number $2373$ in that
table, where one can find moreover that the torsion part of its Mordell-Weil
group is isomorphic to $(\ZZ/3\ZZ)^{2}$. 
\end{proof}

\subsection{\label{subsec:Two-polarizations-and}Two polarizations and a degree
$8$ projective model }

The divisor 
\[
D_{14}=4D_{2}-\left({\textstyle \sum_{j=1}^{9}}A_{j}+A_{j}'\right)
\]
is linearly equivalent to $D_{2}+F$ ($F$ a fiber of $\varphi$)
and is effective. Let us define $D_{8}=D_{14}-(A_{1}+A_{1}')$. 
\begin{prop}
\label{prop:degree8Model}The divisors $D_{8}$ and $D_{14}$ are
ample of square $D_{8}^{2}=8,$ $D_{14}^{2}=14$. The linear system
$|D_{8}|$ is base point free, non-hyperelliptic, and defines an embedding
$X_{\l}\hookrightarrow\PP^{5}$ as a degree $8$ surface. For $d\in\NN^{*}$,
let $n_{d}$ be the number of $\cu$-curves of degree $d$ for $D_{8}$.
The series $\sum n_{d}T^{d}$ begins with 
\[
32T+20T^{2}+334T^{4}+576T^{5}+880T^{6}+8640T^{7}+17784T^{8}...,
\]
in particular $X_{\l}$ contains $32$ lines and $20$ conics. 
\end{prop}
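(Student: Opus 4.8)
The plan is to reduce everything to intersection theory on the explicit lattice $\NS X_{\l})=\ZZ D_{2}\oplus\calK_{3}$ of Proposition~\ref{prop:The-NS-latt-Discri54}, to Nakai--Moishezon, and to the classical theory of complete linear systems on K3 surfaces. Recall the intersection data: $D_{2}^{2}=2$; the curves $A_{j},A_{j}'$ are contracted by $\eta$, so $D_{2}\cdot A_{j}=D_{2}\cdot A_{j}'=0$; and $A_{j}^{2}=A_{j}'^{2}=-2$, $A_{j}A_{j}'=1$, all other products among the $A_{j},A_{j}'$ being zero. Expanding $D_{14}=4D_{2}-\sum_{j=1}^{9}(A_{j}+A_{j}')$ gives $D_{14}^{2}=16\cdot 2+\sum_{j=1}^{9}(A_{j}+A_{j}')^{2}=32-18=14$; since $D_{14}\cdot(A_{1}+A_{1}')=-(A_{1}+A_{1}')^{2}=2$, one then obtains $D_{8}^{2}=D_{14}^{2}-2D_{14}\cdot(A_{1}+A_{1}')+(A_{1}+A_{1}')^{2}=14-4-2=8$, and likewise $D_{8}\cdot D_{14}=12$ and $D_{2}\cdot D_{8}=8$.

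For ampleness I write $D_{14}=D_{2}+F_{o}$, where $D_{2}$ is big and nef (pull-back of a line under $\eta$) and $F_{o}$ is a fibre of $\varphi$, hence nef; so $D_{14}$ is nef, and for every irreducible curve $C$ one has $D_{14}\cdot C=D_{2}\cdot C+F_{o}\cdot C>0$ (if $C$ is contracted by $\eta$ it is one of the sections $A_{j},A_{j}'$ of $\varphi$ and $F_{o}\cdot C=1$; otherwise $D_{2}\cdot C\geq 1$), so $D_{14}$ is ample by Nakai--Moishezon. For $D_{8}$, Riemann--Roch on the K3 surface gives $h^{0}(D_{8})+h^{0}(-D_{8})\geq\chi(D_{8})=2+\tfrac12 D_{8}^{2}=6$, and $D_{8}\cdot D_{14}=12>0$ rules out $-D_{8}$ effective, so $D_{8}$ is effective and lies in the positive cone; hence (this being a K3 surface) $D_{8}$ is ample iff $D_{8}\cdot C>0$ for every $\cu$-curve $C$. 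If some $\cu$-curve had $D_{8}\cdot C\leq 0$, then $D_{14}\cdot C=D_{8}\cdot C+(A_{1}+A_{1}')\cdot C\leq(A_{1}+A_{1}')\cdot C$; writing $A_{1}+A_{1}'$ and $C$ in their $D_{14}$-orthogonal decompositions and applying the Hodge index theorem to this inequality forces $(D_{14}\cdot C)^{2}\leq 8$, i.e.\ $D_{14}\cdot C\in\{1,2\}$. There are only finitely many $\cu$-classes in $\NS X_{\l})$ with $D_{14}\cdot C\leq 2$, and a direct check (conveniently done with \cite{Magma}) shows each of them has $D_{8}\cdot C>0$. So $D_{8}$ is ample; Kodaira vanishing then gives $h^{1}(D_{8})=0$, and $h^{2}(D_{8})=h^{0}(-D_{8})=0$, so $h^{0}(D_{8})=6$.

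Next I would apply Saint-Donat's analysis of linear systems on K3 surfaces. Since $D_{8}$ is ample with $D_{8}^{2}=8$: $|D_{8}|$ is base point free unless there is an elliptic curve $E\subset X_{\l}$ with $D_{8}\cdot E=1$; and once $|D_{8}|$ is base point free and non-hyperelliptic, $\phi_{|D_{8}|}$ is a closed embedding (there are no contracted curves, $D_{8}$ being ample). For $D_{8}^{2}=8$ the system is hyperelliptic only if $D_{8}\equiv 2B$ with $B^{2}=2$, or there is an elliptic curve $E$ with $D_{8}\cdot E=2$. One checks directly that $D_{8}$ is not $2$-divisible in $\NS X_{\l})=\ZZ D_{2}\oplus\calK_{3}$ (writing $D_{8}=4D_{2}-\m$ with $\m\in\calK_{3}$, a $2$-divisor of $D_{8}$ in $\NS X_{\l})$ would make $\m$ a $2$-divisor in $\calK_{3}$, which it is not), and one rules out an elliptic curve $E$ with $D_{8}\cdot E\in\{1,2\}$ by enumerating the isotropic classes of $\NS X_{\l})$ with the prescribed $D_{8}$-degree and checking that none is represented by an irreducible curve; as $\NS X_{\l})$ is explicit, all of this is a finite computation. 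Therefore $|D_{8}|$ is base point free, non-hyperelliptic, and embeds $X_{\l}$ into $\PP^{5}$ as a surface of degree $D_{8}^{2}=8$.

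Finally, for the enumeration: since $D_{8}$ is ample, for each $d$ the set of $\cu$-classes $C\in\NS X_{\l})$ with $0<D_{8}\cdot C\leq d$ is finite; each is effective after a choice of sign, and it is the class of an irreducible curve precisely when it is not a sum of two effective classes of strictly smaller positive $D_{8}$-degree. Running this sieve in $\NS X_{\l})$ produces $n_{1}=32$, $n_{2}=20$, $n_{3}=0$, $n_{4}=334$, and so on; here the $32$ lines are the $16$ curves $A_{2},A_{2}',\dots,A_{9},A_{9}'$ together with the $16$ curves of $\mathcal{A}_{1}'$, and (since $D_{8}\cdot A_{1}=D_{8}\cdot A_{1}'=2$) the curves $A_{1},A_{1}'$ are among the $20$ conics. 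I expect the main difficulty to be exactly this last family of verifications together with the base point freeness and very ampleness step: ruling out every Saint-Donat exceptional configuration, and certifying that the list of $\cu$-curves of each degree is complete, are finite but genuinely lattice-theoretic computations in $\NS X_{\l})$, where Magma does the heavy lifting, whereas the numerical identities and the Nakai--Moishezon argument are routine.
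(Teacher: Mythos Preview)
Your argument is correct, but it diverges from the paper's at the two key steps: the ampleness of $D_{8}$ and the exclusion of the Saint-Donat exceptional configurations. You bound a hypothetical bad $\cu$-curve via Hodge index (your inequality $(D_{14}\cdot C)^{2}\leq 8$ is correct) and then hand the remaining finite check to a lattice enumeration in $\NS X_{\l})$; likewise you dispose of elliptic classes with $D_{8}\cdot E\in\{1,2\}$ by listing isotropic vectors of small $D_{8}$-degree. The paper instead produces, for each $j\geq 2$, a pencil $\d_{j}$ of plane quartics through $\mathcal{P}_{9}$ with nodes at $p_{1}$ and $p_{j}$, whose generic member lifts to an irreducible curve $\G_{j}$ on $X_{\l}$ with $\G_{j}^{2}=2$ and $D_{8}\equiv \G_{j}+A_{j}+A_{j}'$. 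From this single geometric decomposition both steps fall out without any enumeration: a $\cu$-curve $B$ with $D_{8}B=0$ is forced to be orthogonal to every $A_{j},A_{j}'$, hence a multiple of $D_{2}$, absurd; and for an elliptic fibre class $F'$ with $D_{8}F'\leq 2$ one gets $F'\G_{j}\in\{0,1,2\}$, each case contradicting either $\G_{j}^{2}=2$ or the genus of $\G_{j}$ or again orthogonality to all $A_{j},A_{j}'$. So the paper trades your Magma checks for one extra piece of plane-curve geometry; your route is more uniform and transplants to any K3 with an explicit N\'eron--Severi lattice, while theirs is computer-free (up to the final $n_{d}$ count, where both of you invoke the same algorithm). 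Your identification of the $32$ lines with $A_{j},A_{j}'$ ($j\geq 2$) together with the $16$ curves of $\mathcal{A}_{1}'$ is also correct and slightly more explicit than what the paper states.
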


\begin{proof}
Let $B$ be a $\cu$-curve such that $D_{14}B\leq0$. Since $D_{2}$
is effective and $D_{2}^{2}>0$, one has $D_{2}B\geq0$, moreover
since $F$ is a fiber, $FB\geq0$ and we must have $D_{2}B=0=FB$.
That implies that $B$ is an irreducible component of a singular fiber,
ie $B=\varTheta_{j}$ for some $j\in\{1,\dot{,24\}}$. But since $D_{2}\varTheta_{j}=2$
for all $j$, such a curve $B$ cannot exist, thus $D_{14}$ is ample.

Let us prove that $D_{8}$ is ample. We have 
\[
\g_{1}+\g_{1}'\equiv4D_{2}-2(A_{1}+A_{1}')-{\textstyle \sum_{j=1}^{9}}(A_{j}+A_{j}'),
\]
thus $D_{8}\equiv A_{1}+A_{1}'+\g_{1}+\g'_{1}$ and the divisor $D_{8}$
is effective. We check that $D_{8}A_{1}=D_{8}A_{1}'=$$D_{8}\g_{1}=D_{8}\g'_{1}=1$
and $D_{8}^{2}=8$, therefore $D_{8}$ is nef and big. Suppose that
there is a $\cu$-curve $B$ on $X_{\l}$ such that $D_{8}B=0$. Then
by the above expression of $D_{8}$, one has $A_{1}B=A_{1}'B=0$.
Let $L\hookrightarrow\PP^{2}$ be a line. For $j\in\{2,\dots,9\}$,
let us consider the linear system 
\[
\d_{j}=|4L-(p_{1}+p_{j}+{\textstyle \sum_{j=1}^{9}}p_{k})|
\]
of the quartic curves that go through the points in $\mathcal{P}_{9}$
and with multiplicity $2$ at $p_{1}$ and $p_{j}$. Using \verb"LinSys",
one can compute that for each $j>1$, the linear system $\d_{j}$
is a pencil of curves and the base points set is $\mathcal{P}_{9}$.
Moreover, the generic element $\vartheta_{j}$ of $\d_{j}$ is an
irreducible curve of geometric genus $1$ which cuts $C_{\l}$ in
$\mathcal{P}_{9}$ and two more points. Thus we obtain that for each
$j>1$, the strict transform of $\vartheta_{j}$ is an irreducible
curve $\G_{j}$ such that 
\[
D_{8}\equiv\G_{j}+A_{j}+A_{j}'\text{ and }\G_{j}^{2}=2.
\]
Since $D_{8}B=0$, we obtain $A_{j}B=A_{j}'B=0$ for all $j\in\{1,\dots,9\}$.
Since the orthogonal of the classes $A_{j},A_{j}'$, $j\in\{1,\dots,9\}$
(on which $B$ belongs) is generated by $D_{2}$, the class of $B$
must be a multiple of $D_{2}$ and have positive square, which is
absurd. Therefore $D_{8}$ is ample.

Suppose that there is a fiber $F'$ such that $D_{8}F'\in\{1,2\}$.
Observe that by using the expression for $D_{8}$, we get that $F'\Gamma_{j}=0,1,2$.
If $F'\Gamma_{j}=0$, then $\Gamma_{j}$ is contained in a fiber of
the fibration determined by $F'$, but this is not possible since
$\Gamma_{j}^{2}=2$. If $F'\Gamma_{j}=1$, then $\Gamma_{j}$ is a
section of the fibration so is a rational curve, but again this is
not possible. If $F'\Gamma_{j}=2$ (we can assume that this holds
for all $j$, otherwise we are in a previous case), then $F'$ is
in the orthogonal complement of the $A_{j},A_{j}'$ but this is not
possible since this is generated by $L$, which is of square $2$.
Therefore there are no such fiber $F'$ and using \cite{SaintDonat},
we obtain that the linear system $|D_{8}|$ is base-point free and
gives an embedding of $X_{\l}$.

With respect to the divisor $D_{8}$, the degrees of the curves $A_{1},A_{1}',\g_{1},\g_{1}'$
equal $2$ and the degrees of curves $A_{i},A_{i}',\,i\geq2$ is $1$.
For the assertions on the number of rational curves of degree $d\leq8$
we used an algorithm (see e.g.~\cite{Roulleau}), which computes
the classes of $\cu$-curves in $\NS X_{\l})$ of given degrees with
respect to a fixed ample class. 
\end{proof}
Proceeding in a similar way as in the proof of Proposition \ref{prop:degree8Model},
we obtain: 
\begin{prop}
Let $i,j\in\{1,\dots,9\}$, $i\neq j$. The divisor 
\[
D_{i,j}=D_{14}-(A_{i}+A_{i}'+A_{j}+A_{j}')
\]
is nef of square $2$ and the linear system $|D_{i,j}|$ is base point
free. 
\end{prop}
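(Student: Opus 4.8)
The plan is to mimic closely the argument given for Proposition~\ref{prop:degree8Model}, using the divisor $D_{14}=4D_{2}-\sum_{j=1}^{9}(A_{j}+A_{j}')$ as the starting point (we already know it is ample of square $14$) and subtracting off the two $\mathbf{A}_{2}$-configurations over $p_{i}$ and $p_{j}$. First I would record the elementary intersection computation: since $D_{14}$ is linearly equivalent to $D_{2}+F$ with $D_{2}A_{k}=1$ and $FA_{k}=0$ for every section $A_{k}$, we have $D_{14}A_{i}=D_{14}A_{i}'=D_{14}A_{j}=D_{14}A_{j}'=1$, whence $D_{i,j}^{2}=D_{14}^{2}-2(1+1+1+1)-2(A_{i}A_{i}')-2(A_{j}A_{j}')=14-8-2-2=2$. (Here $A_{i}A_{i}'=1$, $A_{i}A_{j}=0$ for $i\neq j$, etc.)

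Next, for nefness and base-point-freeness I would exhibit $D_{i,j}$ as an explicit effective divisor on which its square is visibly small, exactly as was done for $D_{8}$. When $\{i,j\}\supseteq\{1\}$ one can use the quartic pencils $\delta_{k}=|4L-(p_{1}+p_{k}+\sum p_{m})|$ already introduced, whose generic member has strict transform $\Gamma_{k}$ with $D_{8}\equiv\Gamma_{k}+A_{k}+A_{k}'$; subtracting a further $A_{j}+A_{j}'$ realizes $D_{i,j}$ as a sum of a section-like curve plus the four curves $A_{i},A_{i}',A_{j},A_{j}'$. For a general pair $\{i,j\}$ one runs the same \verb"LinSys" computation on the pencil $|4L-(p_{i}+p_{j}+\sum p_{m})|$ of quartics singular at $p_{i},p_{j}$ and passing through $\mathcal{P}_{9}$; the generic member is an irreducible genus~$1$ curve whose strict transform $\Gamma_{i,j}$ satisfies $D_{i,j}\equiv\Gamma_{i,j}$ with $\Gamma_{i,j}^{2}=2$. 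Then I argue exactly as before: if a $\cu$-curve $B$ had $D_{i,j}B=0$ it would be orthogonal to all the $A_{k},A_{k}'$ (one brings in the other pencils $\delta_{k}$ and the expression $D_{14}\equiv\Gamma_{k}+A_{k}+A_{k}'$ to force $A_{k}B=A_{k}'B=0$ for every $k$), hence a multiple of $D_{2}$, contradicting $D_{2}^{2}>0$; and if some fiber $F'$ had $D_{i,j}F'\in\{1,2\}$ one gets $F'\Gamma=0,1,2$ for the auxiliary genus~$1$ curves $\Gamma$, each impossible since $\Gamma^{2}=2>0$ forbids $\Gamma$ being contained in or sectioning the $F'$-fibration, and $F'$ being in the span of $D_{2}$ is again absurd. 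With no obstructing $\cu$-curve and no elliptic pencil of small degree, Saint-Donat's criterion \cite{SaintDonat} yields base-point-freeness (and, once $D_{i,j}^{2}=2$, one does not even need to discuss hyperellipticity — a nef and big divisor of square $2$ with no base components is automatically base point free on a K3 with no special $\cu$-curve).

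The main obstacle I expect is \emph{uniformity over the pair $(i,j)$}: the explicit quartic pencils $\delta_{k}$ written down in the paper are tailored to $p_{1}$, and to cover all $\binom{9}{2}=36$ unordered pairs one wants either an $S_{3}$- (or Hesse-group-) symmetry argument that moves any pair to a standard one, or a single \verb"LinSys"/Magma run verifying the required properties (pencil, base locus $=\mathcal{P}_{9}$, irreducible genus~$1$ generic member) for each pair. Assuming the symmetry of the configuration $(\mathcal{P}_{9},\mathcal{C}_{12})$ acts transitively enough on pairs — which is plausible given the $(9_{8},12_{6})$ structure — this reduces to one computation; otherwise the proof is a finite check, and the write-up can legitimately say ``proceeding as in the proof of Proposition~\ref{prop:degree8Model}'' and defer the enumeration to the computer, as the statement already signals.
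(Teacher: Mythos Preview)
Your overall strategy---use the pencil $|4L-(2p_i+2p_j+\sum_m p_m)|$ to produce an irreducible curve $\Gamma_{i,j}\in|D_{i,j}|$ of self-intersection $2$, then invoke Saint-Donat---is exactly what the paper means by ``proceeding in a similar way'', and those two ingredients already suffice. But you have transplanted the $D_8$-argument too literally in the middle, and that step is wrong.

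Specifically, you claim that if a $(-2)$-curve $B$ satisfies $D_{i,j}B=0$ then $B$ must be orthogonal to all $A_k,A_k'$ and hence cannot exist. This is false: the paper records, immediately after the proposition, eleven $(-2)$-curves with $D_{i,j}B=0$ (the ten curves $\theta_{ijklmn},\theta'_{ijklmn}$ over the five conics through $p_i$ and $p_j$, plus the strict transform of the line $\overline{p_ip_j}$). So $D_{i,j}$ is nef but \emph{not} ample, and your argument for ampleness was bound to fail. The mechanism you invoke---``$D_{14}\equiv\Gamma_k+A_k+A_k'$''---does not exist (it is $D_8$, not $D_{14}$, that decomposes that way), and more importantly the $D_8$-proof worked because there were \emph{eight} different decompositions $D_8\equiv\Gamma_k+A_k+A_k'$, one for each $k\ne 1$, which together forced $B$ into $(\bigoplus A_k,A_k')^\perp$. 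For $D_{i,j}$ there is only the single expression $D_{i,j}\equiv\Gamma_{i,j}$, so the orthogonality conclusion cannot be drawn.

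The fix is simply to drop that paragraph. Once $\Gamma_{i,j}$ is an irreducible curve with $\Gamma_{i,j}^2=2$, nefness is immediate: any irreducible curve $C\ne\Gamma_{i,j}$ has $D_{i,j}C=\Gamma_{i,j}C\ge 0$, and $D_{i,j}\Gamma_{i,j}=2$. For base-point-freeness you only need to exclude an elliptic class $E$ with $D_{i,j}E=1$ (not $E\cdot D_{i,j}=2$, which is relevant to hyperellipticity and is irrelevant here since $D_{i,j}^2=2$); your observation that $\Gamma_{i,j}E=1$ would force the genus-$2$ curve $\Gamma_{i,j}$ to be a section handles this. Two minor slips: $D_2A_k=0$ and $FA_k=1$, not the other way around; and in your square computation the last ``$-2-2$'' is really $(A_i+A_i')^2+(A_j+A_j')^2$, not $-2A_iA_i'-2A_jA_j'$.
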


One can compute that the intersection with $D_{ij}$ is $0$ for the
$10$ curves $\t_{ijklmn},\t'_{ijklmn}$ (where $\{k,l,m,n\}\subset\{1,\dots,9\}$
is a set of $4$ elements such that the conic $C_{ijklmn}$ exists),
and for the $\cu$-curve which is the strict transform on $X_{\l}$
of the line through cusps $p_{i},p_{j}$.

\subsection{\label{subsec:A-Hessian-model}A Hessian model of the natural fibration
of $X_{\protect\l}$}

\subsubsection{The generic fiber of the elliptic fibration $\varphi$ and $18$
rational points}

Let $f_{\l}$ be the equation of the $9$ cuspidal sextic $C_{\l}$
which is the dual of $E_{\l}$, and let $c_{\l}$ be the equation
of the Cayleyan elliptic curve $\Ca(\l)$ (see equation \eqref{eq:cayleysian}),
the unique cubic that goes through the $9$ cusps of the sextic curve
$C_{\l}$.

We recall (see Section \ref{subsec:NaturalEllFib}) that $Y_{\l}$
is the blow-up of the plane at the $9$ points in $\mathcal{P}_{9}$;
it has a natural elliptic fibration $\varphi'$, coming from the pencil
of sextic curves which have double points at the $9$ points in $\mathcal{P}_{9}$,
pencil which is generated by $C_{\l}$ and $2\Ca(\l)$. A singular
model of $Y_{\l}$ is therefore obtained as the surface in $\PP^{1}\times\PP^{2}$
with equation $uf_{\l}-vc_{\l}^{2}=0$, where $u,v$ are the coordinates
of $\PP^{1}$. The projection onto $\PP^{1}$ induces the fibration
$\varphi':Y_{\l}\to\PP^{1}$. A singular model of the $K3$ surface
$X_{\l}$ is the surface $X_{\l}^{sing}$ in $\PP^{1}\times\PP^{2}$
with equation $u^{2}f_{\l}-v^{2}c_{\l}^{2}=0$; again the projection
onto $\PP^{1}$ induces the natural fibration $X_{\l}^{sing}\to\PP^{1}$,
where the generic fibers are $9$-nodal sextic curves.

In order to obtain a smooth model of $X_{\l}$, let us consider the
linear system $L_{4}(\mathcal{P}_{9})$ of quartics that contain the
$9$ cusps. The linear system $L_{4}(\mathcal{P}_{9})$ has (projective)
dimension $5$ and defines a rational map $\phi:\PP^{2}\dashrightarrow\PP^{5}$
not defined on $\mathcal{P}_{9}$. One computes that the image of
$X_{\l}^{sing}$ by the rational map 
\[
(i_{d},\phi):\PP^{1}\times\PP^{2}\dashrightarrow\PP^{1}\times\PP^{5}
\]
is a smooth model of $X_{\l}$; from Section \ref{subsec:NaturalEllFib},
the images of the cusps are the $18$ $\cu$-curves on $X_{\l}$ forming
a $9{\bf A}_{2}$-configuration. Taking the generic point over $\PP^{1}$,
one get a smooth genus $1$ curve in $\PP_{/\QQ(t)}^{5}$ (where $t=\frac{u}{v}$).
That curve $E_{K3}$ has naturally $18$ rational points, corresponding
to the $18$ $\cu$-curves. Using Magma, we computed a Hessian model
$E_{K3}\hookrightarrow\PP_{/\QQ(t)}^{2}$, which is 
\begin{thm}
\label{thm:AHessian-model-of}A model of the generic fiber of the
fibration $X_{\l}\to\PP^{1}$ is 
\begin{equation}
E_{K3}\,\,\,\,\,x^{3}+y^{3}+z^{3}+\frac{\l^{3}(t^{2}+3)-4t^{2}}{\l^{2}(t^{2}-1)}xyz=0.\label{eq:ofEK3}
\end{equation}
\end{thm}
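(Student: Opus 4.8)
The plan is to produce the Hessian model by explicit birational computation, exactly as announced in the paragraph preceding the statement: start from the singular model $X_\l^{sing}\subset\PP^1\times\PP^2$ with equation $u^2 f_\l - v^2 c_\l^2 = 0$, where $f_\l$ is the defining sextic of $C_\l$ and $c_\l$ is the cubic defining the Cayleyan curve $\Ca(\l)$ (equation \eqref{eq:cayleysian}). First I would pass to the generic fiber over $\PP^1$ by setting $t = u/v$, obtaining the plane curve $\{t^2 f_\l - c_\l^2 = 0\}\subset\PP^2_{\QQ(t)}$, which is a $9$-nodal sextic whose nodes are exactly the points of $\cP_9$ (these are the points where $f_\l$ vanishes to order $2$ and $c_\l$ vanishes to order $1$, so $t^2 f_\l - c_\l^2$ vanishes to order $2$). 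Its normalization is the smooth genus $1$ curve $E_{K3}$ over $\QQ(t)$.

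Next I would realize this normalization concretely. Blowing up the $9$ nodes is the same, on the level of linear systems, as mapping by the linear system $L_4(\cP_9)$ of plane quartics through the $9$ points of $\cP_9$; as noted in the text this system has projective dimension $5$, and the restriction of $\phi:\PP^2\dashrightarrow\PP^5$ to our sextic curve embeds the normalization as a genus $1$ curve of degree $8$ in $\PP^5_{\QQ(t)}$ carrying the $18$ rational points coming from the two branches at each of the $9$ nodes. From there the task is purely a matter of genus $1$ curve arithmetic over the field $\QQ(t)$: pick one of these rational points as origin, compute a Weierstrass (or directly a Hessian) model, and simplify. Since the curve has full level-$3$ structure visible — the $18$ rational points map to the $9$ $\mathbf{A}_2$-configuration curves, which are torsion sections of $\varphi$ by Theorem \ref{Thm:24InTheFibers} (Mordell--Weil torsion $(\ZZ/3\ZZ)^2$) — the curve admits a Hesse form $x^3+y^3+z^3+\mu\,xyz=0$ for a suitable $\mu = \mu(t,\l)\in\QQ(t)$, the nine rational inflection points being the visible $9$-torsion structure. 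The parameter $\mu$ is then determined by matching a single invariant, e.g. the $j$-invariant or the cross-ratio of the four points where a coordinate line meets the cubic, and one reads off
\[
\mu(t,\l) = \frac{\l^3(t^2+3) - 4t^2}{\l^2(t^2-1)}.
\]

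The concrete engine for all of this is Magma, as stated: the computation of the quartic linear system, the degree $8$ model in $\PP^5$, and the reduction to Hesse form are mechanical once set up. I expect the main obstacle to be purely computational bookkeeping rather than conceptual: carrying out the elimination to get from the degree $8$ model down to a plane Weierstrass model over the two-variable rational function field $\QQ(t)$ (with $\l$ a further parameter) and then recognizing the Hesse normal form and extracting $\mu(t,\l)$ in the clean closed form above. Two sanity checks I would build in to confirm the answer: at the two special values of $t$ for which the sextic fiber degenerates (namely $t$ with $t^2 f_\l = c_\l^2$ forcing the fiber to be $F_o\simeq E_\l$ or the doubled Cayleyan $2\,\Ca(\l)$), the formula should specialize correctly — in particular one branch should recover the Hesse coefficient of $E_\l$, which is $-3\l$, and the other the Hesse coefficient of the Cayleyan cubic, namely $-(\l^3+2)/\l$ from \eqref{eq:cayleysian} — and the generic $j$-invariant computed from $\mu(t,\l)$ should be non-constant in $t$, consistent with Theorem \ref{Thm:24InTheFibers}. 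Verifying these specializations pins down the formula unambiguously and completes the proof.
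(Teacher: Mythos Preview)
Your approach is exactly the one the paper takes: pass to the generic fiber of the singular model $u^2 f_\l - v^2 c_\l^2 = 0$, embed via the linear system $L_4(\cP_9)$ into $\PP^5_{\QQ(t)}$, and then let Magma reduce the resulting degree $8$ genus $1$ curve (with its visible $9$-torsion structure) to Hesse normal form. There is no conceptual gap.

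One of your sanity checks, however, would fail for the wrong reason and might mislead you. You expect the second special fiber to recover the Cayleyan cubic with Hesse parameter $-(\l^3+2)/\l$. It does not: plugging $t\to\infty$ into the formula gives $\mu = (\l^3-4)/\l^2$, which is the Hesse parameter of the \emph{Hessian} $\He(\l)$ of $E_\l$, not of $\Ca(\l)$. The reason is that on the singular model the fiber over that point is the non-reduced curve $2\,\Ca(\l)$, but the corresponding reduced fiber on the smooth K3 is an unramified double cover of $\Ca(\l)$, namely $\He(\l)$; the two are $2$-isogenous (see Remark~\ref{rem:doublecoverCayleysian}). So the correct specialization targets are $-3\l$ at $t=0$ and $(\l^3-4)/\l^2$ at $t=\infty$, and both indeed hold.
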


The elliptic curve $E_{K3}$ contains the $9$ obvious $3$-torsion
points 
\[
\begin{array}{c}
Q_{1}=(0:-1:1),\,\,Q_{2}=(-1:0:1),\,\,Q_{3}=(-1:1:0),\hfill\\
Q_{4}=(0:-\o:1),\,\,Q_{5}=(\o+1:0:1),\,\,Q_{6}=(-\o:1:0),\hfill\\
Q_{7}=(0:\o+1:1),\,\,Q_{8}=(-\o:0:1),\,\,Q_{9}=(\o+1:1:0)
\end{array}
\]
(where $\o^{2}+\o+1=0$; we take $Q_{1}$ as the neutral element)
and the following $9$ points 
\[
\begin{array}{c}
P_{1}=(-2t:\l(t+1):\l t+\l),\hfill\\
P_{2}=(\l(t-1):-2t:\l t+\l),\hfill\\
P_{3}=(-\l t-\l:-\l t+\l:2t),\hfill\\
P_{4}=((2\o+2)t:\l(\o t+\o):\l t+\l)\hfill\\
P_{5}=(\l(\o+1)(-t+1):-2\o t:\l t+\l),\hfill\\
P_{6}=((\o+1)\l(t+1):\o\l(-t+1):2t),\hfill\\
P_{7}=(-2\o t:-\l(\o+1)(t+1):\l t-\l),\hfill\\
P_{8}=(\l\o(t-1):(2\o+2)t:\l t+\l),\hfill\\
P_{9}=(-\l\o(t-1):(\o+1)\l(t-1):2t).\hfill
\end{array}
\]
Together, these $18$ points are the above-mentioned rational points
of $E_{K3/\QQ(\o,t)}$ corresponding to the $18$ sections of the
fibration of $X_{\l}$. 
\begin{rem}
For the neutral element of $E_{K3}$, let us choose $Q_{1}$. One
can check that the point $P_{k}$ is the translate of $P_{1}$ by
the $9$ torsion point $Q_{k}$ ($k\in\{1,\dots,9\}$), i.e. $P_{k}=P_{1}+Q_{k}$. 
\end{rem}

\subsubsection{A smooth model of $X_{\protect\l}$ in $\protect\PP^{1}\times\protect\PP^{2}$ }

By taking the homogenization of the generic fiber $E_{K3}$ in \eqref{eq:ofEK3},
we get a natural model of the K3 surface $X_{\l}$ as 
\begin{equation}
\l^{2}(u^{2}-v^{2})(x^{3}+y^{3}+z^{3})+(\l^{3}(u^{2}+3v^{2})-4u^{2})xyz=0.\label{eq:HessianK3}
\end{equation}
in the space $\PP^{1}\times\PP^{2}$ (with coordinates $u,v;x,y,z$,
where $t=\frac{u}{v}$). That model is smooth, and the generic fibers
are smooth cubic curves, by contrast with the previous model $X_{\l}^{sing}$.
We denote by $(P)$ the section in $X_{\l}$ corresponding to the
points $P\in E_{K3}$. Using Magma, it is then possible to obtain
the equations of the $\cu$-curves (also sections) $A_{j}=(Q_{j})$,
resp. $A_{j}'=(P_{i})$, which are on $X_{\l}\hookrightarrow\PP^{1}\times\PP^{2}$.
We can check that: 
\begin{lem}
The $9$ curves $A_{j}+A_{j}'$ ($j\in\{1,\dots,9\}$) form a $9\mathbf{A}_{2}$-configuration. 
\end{lem}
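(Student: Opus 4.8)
The plan is to verify directly, using the explicit model in $\PP^1 \times \PP^2$ given by \eqref{eq:HessianK3} together with the explicit coordinates of the sections $A_j = (Q_j)$ and $A_j' = (P_j)$, that the eighteen sections assemble into a $9\mathbf{A}_2$-configuration. Concretely, a $9\mathbf{A}_2$-configuration requires three things: each $A_j$ and each $A_j'$ is a $\cu$-curve; for each $j$ one has $A_j \cdot A_j' = 1$; and all the remaining intersection numbers among the eighteen curves vanish, i.e. $A_i \cdot A_j = A_i' \cdot A_j' = 0$ for $i \neq j$ and $A_i \cdot A_j' = 0$ for $i \neq j$. Since these curves are sections of the elliptic fibration $\varphi : X_\l \to \PP^1$ induced by projection to the $\PP^1$-factor, each is automatically a smooth rational curve of self-intersection $-2$, so the first point is free.

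First I would recall that the sections $(P)$ and $(Q)$ of an elliptic fibration with zero section $(Q_1)$ satisfy the height-pairing formalism of Shioda: $(P)\cdot(Q)$ can be read off from the group law on the generic fiber and the local contributions at the singular fibers. By Theorem~\ref{Thm:24InTheFibers} the fibration $\varphi$ has exactly $8$ fibers of type $\tilde{\mathbf A}_2$ and the Mordell--Weil group is $\ZZ \times (\ZZ/3\ZZ)^2$. The $9$ torsion sections $Q_1,\dots,Q_9$ form precisely the $3$-torsion subgroup $(\ZZ/3\ZZ)^2$ on the generic fiber $E_{K3}$ of \eqref{eq:ofEK3} — this is visible from the Hessian form, whose obvious inflection points are exactly the nine listed $Q_k$. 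Two distinct torsion sections of the same order in a K3 elliptic fibration are disjoint when the torsion injects compatibly into the component groups of the reducible fibers; here each reducible fiber is $\tilde{\mathbf A}_2$ with component group $\ZZ/3\ZZ$, and $(\ZZ/3\ZZ)^2$ distributes its nine elements over the fibers so that $A_i \cdot A_j = 0$ for $i\neq j$. Equivalently, one can simply solve, in the explicit $\PP^1\times\PP^2$ model, the system defining $A_i \cap A_j$ and check it is empty.

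Next, the remark already recorded in the excerpt states $P_k = P_1 + Q_k$, so the curves $A_j' = (P_j)$ are the $\ZZ/3\ZZ$-torsion translates of the single non-torsion section $(P_1)$. Translation by a torsion section is an automorphism of $X_\l$ preserving $\varphi$, hence it permutes the singular fibers and acts on the height pairing by an isometry; consequently $A_i' \cdot A_j' = A_{i-j}'\cdot A_0' $ is independent of the pair and equals the disjointness we get by the same component-group argument applied to the coset $P_1 + (\ZZ/3\ZZ)^2$. For the mixed intersections $A_i \cdot A_j'$ one uses the same translation-invariance to reduce to computing $(Q_k)\cdot(P_1)$ for $k=1,\dots,9$; these are the numbers controlled by the height pairing $\langle (P_1),(P_1)\rangle$ and the correction terms, and one wants $(Q_k)\cdot(P_1) = 0$ for $k\neq 1$ and $(Q_1)\cdot(P_1) = A_1 \cdot A_1' = 1$. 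All of this can be — and, per the phrasing ``We can check that'', presumably is — done by direct elimination in Magma on the equations of the eighteen sections inside \eqref{eq:HessianK3}.

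The main obstacle is purely bookkeeping rather than conceptual: one must make sure the explicit sections $(P_j)$ and $(Q_j)$ computed from the Hessian model \eqref{eq:ofEK3} really are the strict transforms of the curves $A_j, A_j'$ defined geometrically as lying over the cusps $p_j \in \mathcal P_9$, so that the configuration claimed here matches the $\mathcal A_0$ of the introduction; this is exactly the content of the sentence ``the images of the cusps are the $18$ $\cu$-curves on $X_\l$ forming a $9\mathbf{A}_2$-configuration'' in Section~\ref{subsec:A-Hessian-model}, and it follows from tracing the birational map $(i_d,\phi)$ through the construction. Once that identification is in place, the statement is an immediate consequence of Theorem~\ref{Thm:24InTheFibers} (structure of the fibration) together with the Shioda height-pairing computation, or alternatively a finite Gröbner-basis verification; either route gives $A_j \cdot A_j' = 1$ and all other pairwise intersections zero, which is the definition of a $9\mathbf{A}_2$-configuration.
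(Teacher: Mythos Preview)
Your proposal is correct and matches the paper's approach: the paper checks the intersection numbers directly from the equations of the eighteen sections in the model $X_\l \subset \PP^1 \times \PP^2$, and then remarks that the pairwise disjointness of the $A_j$ (hence also of their translates $A_j'$) follows from \cite[VII, Proposition~3.2]{Miranda} on torsion sections. Your height-pairing digression for the mixed intersections $A_i\cdot A_j'$ is superfluous and, as written, would lean on the value $\langle P_1,P_1\rangle=\tfrac{2}{3}$ established only later in Section~\ref{subsec:On-the-Mordell-Weil}; the paper handles those cases purely by the direct computation.
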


\begin{proof}
We use the equations of the $\cu$-curves $A_{j},A_{j}'$ in the model
$X_{\l}\subset\PP^{1}\times\PP^{2}$ to check that $A_{j}A_{j}'=1$
and $A_{j}A_{k}'=A_{j}A_{k}=A_{j}'A_{k}'=0$ for $k\neq j$. In fact,
one already knows that $3$-torsion sections are disjoint by \cite[VII, Proposition 3.2]{Miranda}
(thus the sections $A_{j}'$, being translated of the group of $3$-torsion
sections, are also disjoint). 
\end{proof}
One can check moreover that the $9$ intersection points of $A_{j}$
with $A_{j}'$ for $i=1,\dots,9$ are on the fiber over $0$ of the
fibration $\varphi$, fiber which is isomorphic to $E_{\l}$. Using
the addition law on the elliptic curve $E_{K3}$, one can find other
points of $E_{K3}$, and therefore sections of $\varphi$. By example
the following points 
\[
\begin{array}{c}
R_{1}=(-2t:\l(t-1):\l t+\l),\hfill\\
R_{2}=(\l(t+1):-2t:\l t-\l),\hfill\\
R_{3}=(-\l t+\l:-\l t-\l:2t),\hfill\\
R_{4}=((2\o+2)t:\l\o(t-1):\l t+\l),\hfill\\
R_{5}=(-\l(\o+1)(t+1):-2\o t:\l t-\l),\hfill\\
R_{6}=((\o+1)\l(t-1):-\o\l(t+1):2t),\hfill\\
R_{7}=(-2\o t:\l(\o+1)(-t+1):\l t+\l),\hfill\\
R_{8}=(\l\o(t+1):(2\o+2)t:\l t-\l),\hfill\\
R_{9}=(-\o\l t+\o\l:(\o+1)(\l t+\l):2t),\hfill
\end{array}
\]
are the points $R_{i}=-P_{1}+Q_{i},$ $i\in\{1,\dots,9\}$. We have 
\begin{lem}
The $9$ curves $A_{i},\,(R_{i})$, $i=1,...,9$, form a $9{\bf A}_{2}$-configurations. 
\end{lem}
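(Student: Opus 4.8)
The plan is to mimic the proof of the previous lemma almost verbatim, since the situation is structurally identical. First I would recall that the points $R_i = -P_1 + Q_i$ are again a coset of the group $E_{K3}[3] = \{Q_1,\dots,Q_9\}$ of $3$-torsion sections (here the translate of the $3$-torsion subgroup by $-P_1$), just as the $P_i = P_1 + Q_i$ formed the coset translated by $P_1$. Hence the nine sections $(R_i)$ are pairwise disjoint by \cite[VII, Proposition 3.2]{Miranda}, and likewise the nine $A_i = (Q_i)$ are pairwise disjoint, being the $3$-torsion sections themselves. So the only intersections that need to be analyzed are $A_i\cdot(R_j)$ for $i,j\in\{1,\dots,9\}$.

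Next I would carry out the explicit computation in the smooth model $X_\l\subset\PP^1\times\PP^2$ of \eqref{eq:HessianK3}, exactly as in the proof of the preceding lemma: using the given coordinates of $Q_i$ and $R_j$, produce the equations of the corresponding sections $A_i$ and $(R_j)$ on $X_\l$ (via Magma, as the authors do), and check the intersection numbers. The expected outcome is $A_i\cdot(R_i)=1$ and $A_i\cdot(R_j)=0$ for $i\neq j$; combined with the disjointness of the $A_i$ among themselves and of the $(R_i)$ among themselves, this says precisely that the pairs $(A_i,(R_i))$, $i=1,\dots,9$, form nine disjoint $\mathbf{A}_2$-configurations, i.e.\ a $9\mathbf{A}_2$-configuration.

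A cleaner, more conceptual route — which I would at least mention — is the following. Translation by a section $P$ of an elliptic fibration is an automorphism of $X_\l$, and translation by the $3$-torsion section $Q_i$ sends the section $(R_1)=(-P_1)$ to $(-P_1+Q_i)=(R_i)$ and sends $A_1=(Q_1)$ (the zero section) to $(Q_i)=A_i$. So the whole configuration $\{A_i,(R_i)\}$ is the image under the group of translations by $E_{K3}[3]$ of... hmm, this does not quite reduce it to a single $\mathbf{A}_2$, because translation acts diagonally. Instead the sharper statement is: $A_1\cdot(R_1) = (Q_1)\cdot(-P_1)$, and translating by $Q_i$ gives $A_i\cdot(R_i)=(Q_i)\cdot(-P_1+Q_i)$ for each $i$, so all nine intersection numbers $A_i\cdot(R_i)$ are equal; and $A_i\cdot(R_j) = (Q_1)\cdot(-P_1+Q_j-Q_i)$ after translating by $-Q_i$, so these depend only on $j-i \bmod E_{K3}[3]$. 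Thus it suffices to compute $(Q_1)\cdot(-P_1+Q_k)$ for the nine values of $k$, reducing the check to the claim that the zero section meets $(-P_1)$ with multiplicity $1$ and is disjoint from $(-P_1+Q_k)$ for $Q_k\neq 0$. The first holds because, as noted after the earlier lemma, the intersection point of $A_1$ and $A_1'=(P_1)$ lies on the fiber over $0$; one translates this information by $[-1]$ and by the $Q_k$.

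The main obstacle is simply the explicit verification that $A_i\cdot(R_i)=1$ rather than $0$ (disjointness is automatic from general theory, but the \emph{non}-disjointness of the correct pairs is what gives an $\mathbf{A}_2$-configuration rather than eighteen disjoint $(-2)$-curves), together with making sure that the pairing $A_i\cdot(R_j)$ for $i\neq j$ genuinely vanishes and not just "generically" — i.e.\ that no unexpected coincidence of sections occurs for special $\l$ and that the two sections $(R_j)$ and $A_i$ really are distinct curves. As in the authors' style, this final check is delegated to a direct computation in the projective model using Magma, and no further argument is needed.
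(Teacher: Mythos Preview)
Your proposal is correct but misses the one-line argument the paper actually gives. The paper observes that translation by $-A_1'$ (i.e.\ by the section $-P_1$) is an automorphism of $X_\l$ which sends $A_i'=(P_1+Q_i)$ to $(Q_i)=A_i$ and sends $A_i=(Q_i)$ to $(-P_1+Q_i)=(R_i)$; hence the whole configuration $\{A_i,(R_i)\}$ is the image of the already-established $9\mathbf{A}_2$-configuration $\{A_i,A_i'\}$ under a single automorphism, and there is nothing left to compute.

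Your first route (redo the Magma check from scratch) and your second route (use translations by the $Q_k$ to reduce to nine intersection numbers with the zero section) both work, but each still requires an explicit verification of at least one intersection number. The paper's trick avoids this entirely: you were translating by the \emph{torsion} sections $Q_k$, whereas translating by the \emph{non-torsion} section $-P_1$ carries the old configuration to the new one in one stroke and lets you invoke the previous lemma directly. This is worth internalizing, since the same idea recurs later in the paper when further $9\mathbf{A}_2$-configurations are produced as translates of known ones.
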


\begin{proof}
The curves $A_{i}$ (resp. $(R_{i})$) are images of the curves $A_{i}'$
(resp. $A_{i}$) by the translation by $-A_{1}'$ and we already know
that the curves $A_{i},A_{i}'$ form a $9{\bf A}_{2}$-configuration. 
\end{proof}
We already know that the fiber at $0$ of the elliptic K3 surface
$\varphi:X_{\l}\to\PP^{1}$ is isomorphic to $E_{\l}$, moreover: 
\begin{rem}
\label{rem:doublecoverCayleysian}From equation \ref{eq:HessianK3},
the fiber at $\infty$ of $X_{\l}\to\PP^{1}$ is the elliptic curve
\[
\He(\l):\,\,\,\,x^{3}+y^{3}+z^{3}+\tfrac{(\l^{3}-4)}{\l^{2}}xyz=0,
\]
which is in fact the Hessian of the curve $E_{\l}$. The $j$-invariants
of $\Ca(\l)$ and $\He(\l)$ are distinct, in particular these curves
are not isomorphic. The Cayleyan curve $\text{Ca}(\l)$ is the quotient
of Hessian $\He(\l)$ of $E_{\l}$ by a $2$-torsion point; in particular
the two curves are $2$-isogeneous, see \cite{AD}. 
\end{rem}

\subsubsection{\label{subsec:A-CompletInterModelInP5}A degree 8 non-complete intersection
model in $\protect\PP^{5}$}

One can check that the map from $X_{\l}\subset\PP^{1}\times\PP^{2}$
to $\PP^{5}$ obtained as the product of the identity map of $\PP^{1}$
with the Segre embedding composed with the projection to $\PP^{5}$,
is an embedding with image a degree $8$ K3 surface in $\PP^{5}$
defined by the following $5$ equations:

\[
\begin{array}{c}
-U_{2}U_{4}+U_{1}U_{5},\,\,\,\,-U_{2}U_{3}+U_{0}U_{5},\,\,\,\,-U_{1}U_{3}+U_{0}U_{4},\hfill\\
\lambda^{2}(U_{0}^{2}U_{3}-U_{3}^{3}+U_{1}^{2}U_{4}-U_{4}^{3})+(\lambda^{3}-4)U_{0}U_{1}U_{5}\\
\hfill+\lambda^{2}(U_{2}^{2}U_{5}+3\lambda U_{3}U_{4}U_{5}-U_{5}^{3}),\\
\lambda^{2}(U_{0}^{3}+U_{1}^{3}+U_{2}^{3}-U_{0}U_{3}^{2})+(\lambda^{3}-4)U_{0}U_{1}U_{2}\\
\hfill+\lambda^{2}(-U_{1}U_{4}^{2}+3\lambda U_{0}U_{4}U_{5}-U_{2}U_{5}^{2}),
\end{array}
\]
in particular this is not a complete intersection (in fact, by using
\cite[Chapter VIII, Exercice 11]{Beauville}, a K3 surface has a degree
$8$ smooth model which is not a complete intersection if and only
if it has a smooth model in $\PP^{1}\times\PP^{2}$ of bi-degree $(2,3)$).

Using the images of the known $\cu$-curves on $X_{\l}$, one finds
that this surface in $\PP^{5}$ contains at least $33$ lines.

The involution $\s'$ defined by $u\to(-u_{0}:-u_{1}:-u_{2}:u_{3}:u_{4}:u_{5})$
acts on $X_{\l}\hookrightarrow\PP^{5}$. Also one can check that the
order $3$ automorphisms 
\[
\begin{array}{c}
\a_{1}:u\to(u_{1}:u_{2}:u_{0}:u_{4}:u_{5}:u_{3}),\hfill\\
\a_{2}:u\to(u_{0}:\o u_{1}:\o^{2}u_{2}:u_{3}:\o u_{4}:\o^{2}u_{5})
\end{array}
\]
act on $X_{\l}$ and so does the involution 
\[
\b:u\to(u_{0}:u_{2}:u_{1}:u_{3}:u_{5}:u_{4}).
\]
The fixed point set of $\a_{1}$ is the union of $6$ points, the
fixed point set of $\beta$ is a genus $2$ curve. The involution
$\s'\b$ is symplectic. Using the above equations of $X_{\l}$ and
the equations of curves $A_{1},\dots,A_{9}'$ in $\PP^{5}$, one can
check that the automorphism group $G_{\mathcal{C}}$ that preserve
globally the $9{\bf A}_{2}$-configuration $A_{1},\dots,A_{9}'$ contains
the group $G_{18}$ isomorphic to $\ZZ_{3}\rtimes S_{3}$ generated
by $\a_{1},\a_{2},\s'\b$. We prove in Section \ref{subsec:stabilizerGroupFirstConf}
that $G_{18}$ has index $2$ in the group $G_{\mathcal{C}}$ preserving
the configuration $A_{1},\dots,A_{9}'$.

\subsection{\label{subsec:A-Weierstrass-equation}A Weierstrass equation}

We recall that $\o$ is such that $\o^{2}+\o+1=0$. Let us prove the
following result 
\begin{thm}
\label{thm:TheWeiModel}A minimal Weierstrass model of the elliptic
K3 surface $X_{\l}$ is the elliptic curve 
\[
E_{1/\QQ(\o,t)}:\,\,\,y^{2}=x^{3}-\frac{1}{48}Ax+\frac{1}{864}B,
\]
where the three polynomials $A,B,D$ in $\QQ(\o)(t)$ (of respective
degree $8$, $12$ and $8$) are defined in the Appendix. The $8$
singular fibers $\tilde{\mathbf{A}}_{2}$ of $X_{\l}$ are over the
$8$ zeros of $D$. 
\end{thm}

\begin{proof}
A direct computations gives that the $j$-invariant of the elliptic
curve $E_{K3}$ is

\[
j=-\frac{A^{3}}{(\l^{2}(\l^{3}-1)D)^{3}},
\]
where the formulas for the polynomials $A$ and $D$ in $t$ are given
in the Appendix. For any $j\notin\{0,1728\}$, the elliptic curve
\[
E_{0}(j):\,\,\,\,y^{2}=x^{3}-\frac{1}{48}\frac{j}{j-1728}x+\frac{1}{864}\frac{j}{j-1728}
\]
has $j$-invariant equal to $j$. In our case, we compute that we
have 
\[
\frac{j}{j-1728}=\frac{A^{3}}{B^{2}},
\]
where the polynomial $B$ is also defined in the Appendix. By taking
the change of variables 
\[
x'=u^{2}x,\,y'=u^{3}y
\]
with $u=(B/A)^{1/2}$ in the equation of $E_{0}(j)$, we obtain the
elliptic curve $E_{1}$. The curve $E_{1}$ has also its $j$-invariant
equals to $j$, is also in Weierstrass form, but its coefficients
are coprime degree $8$ and $12$ polynomials in $t$. The discriminant
of the equation of $E_{1}$ is 
\[
\Delta=-(\l^{2}(\l^{3}-1)D)^{3},
\]
where $D$ is a product of $8$ degree $1$ polynomials in $t$. According
to \cite[Table IV.3.1]{Miranda}, the associated elliptic surface
is a K3 surface with $8$ singular fibers of type $\tilde{{\bf A}}_{2}$.

Using Magma, we finally obtain an isomorphism defined over $\QQ(\o,t)$
between the Hesse model $E_{K3}$ and the Weierstrass model $E_{1}$. 
\end{proof}

\section{On automorphisms of the K3 surface $X_{\protect\l}$}

\subsection{\label{subsec:On-the-Mordell-Weil}On the Mordell-Weil lattice of
the elliptic fibration $\varphi$}

For a point $P\in E_{K3}(\QQ(\o,t))$, let us denote by $(P)\hookrightarrow X_{\l}$
the corresponding section of $\varphi:X_{\l}\to\PP^{1}$. We denote
by $\tau\in\aut(X_{\l})$ the automorphism which is the translation
by $A_{1}'$. We have 
\begin{thm}
Modulo torsion, the section $A_{1}'=(P_{1})$ generates the Mordell-Weil
lattice $\text{MWL}(X_{\l})$ of sections. 
\end{thm}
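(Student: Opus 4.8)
The plan is to use the Shioda--Tate formula together with the height pairing on the Mordell--Weil lattice. From Theorem~\ref{Thm:24InTheFibers} we already know that $\mathrm{MW}(\varphi)\cong\ZZ\times(\ZZ/3\ZZ)^2$, so $\mathrm{MWL}(X_\l)$ is a rank $1$ positive-definite lattice, i.e. generated modulo torsion by a single section $S_0$ with minimal height $h_0=\langle S_0,S_0\rangle>0$. The goal is thus to show that the section $(P_1)=A_1'$ has height equal to this minimum $h_0$, equivalently that $\langle A_1',A_1'\rangle$ is not $n^2$ times a smaller rational number for any integer $n\ge 2$, which by the structure of the narrow Mordell--Weil lattice means simply that $A_1'$ is not divisible in $\mathrm{MW}(\varphi)/\mathrm{torsion}$.

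First I would compute the height $\langle A_1',A_1'\rangle$ explicitly using the standard formula
\[
\langle P,P\rangle = 2\chi(\mathcal O_{X_\l}) + 2\,(P\cdot O) - \sum_{v}\mathrm{contr}_v(P),
\]
where $\chi=2$, $O=A_1=(Q_1)$ is the zero section, and the local contributions $\mathrm{contr}_v(P)$ are read off from the $8$ fibers of type $\tilde{\mathbf A}_2$ using the known table of contributions ($0$ or $2/3$ for an $\tilde{\mathbf A}_2=I_3$ fiber). Since $A_1$ and $A_1'$ meet in exactly one point, lying on the fiber over $0$ (as noted right after the lemma on $9\mathbf A_2$-configurations, $A_1\cdot A_1'=1$ and the intersection point is on the fiber $\cong E_\l$ over $0$, which is smooth), we get $A_1'\cdot O=1$ and all local contributions from the $8$ bad fibers; one must determine for how many of the $8$ fibers $\tilde{\mathbf A}_2$ the section $A_1'$ meets a non-identity component. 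Using the explicit coordinates of $P_1$ and the equation \eqref{eq:HessianK3} together with the fiber locations (the $8$ zeros of $D$), this is a finite check, ideally done with the same Magma computations already invoked in the paper; I expect the answer to come out to a value like $\langle A_1',A_1'\rangle = 4 + 2 - k\cdot\tfrac23$ for the appropriate $k$.

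Next I would invoke the constraint coming from the discriminant of the transcendental lattice. By Proposition~\ref{prop:The-NS-latt-Discri54} the discriminant of $\NS(X_\l)$ has order $54=2\cdot 3^3$, and Shioda's formula relates the determinant of $\mathrm{MWL}$ to the discriminant of $\NS$, the discriminants of the fiber lattices, and the order of the torsion:
\[
\bigl|\mathrm{disc}\,\mathrm{MWL}(X_\l)\bigr| = \frac{|\mathrm{disc}\,\NS(X_\l)|\cdot |\mathrm{MW}(\varphi)_{\mathrm{tors}}|^2}{\prod_v |\mathrm{disc}\, T_v| \cdot |\mathrm{disc}\,U|},
\]
with $U$ the hyperbolic plane (from the zero section and a general fiber), each $T_v=A_2$ contributing $3$, the torsion contributing $9^2=81$, and $|\mathrm{disc}\,\NS|=54$. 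Plugging in gives $\mathrm{disc}\,\mathrm{MWL}(X_\l)=\dfrac{54\cdot 81}{3^8}=\dfrac{54}{81}=\dfrac{2}{3}$, so $\mathrm{MWL}(X_\l)=\la \tfrac23\ra$ and its generator has height exactly $2/3$. Comparing with the computed $\langle A_1',A_1'\rangle$: if that value equals $2/3$ we are done immediately; if it came out larger, then $A_1'$ would be a proper multiple of the generator, and one would need to exhibit the generator itself --- but the point $R_1=-P_1+Q_1$ and the translation structure already suggest $A_1'$ is primitive, and in fact $\langle A_1',A_1'\rangle=2/3$ should follow once the fiber-component analysis shows $A_1'$ meets the non-identity component of exactly $7$ of the $8$ bad fibers (giving $6-\tfrac{14}{3}$)... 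I would double-check the bookkeeping here.

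The main obstacle is the last bookkeeping step: correctly determining, for the explicit section $(P_1)$ in the model \eqref{eq:HessianK3}, which component of each of the $8$ reducible fibers it passes through, and hence the exact local contributions. This is a concrete but delicate computation; the cleanest route is to run it in Magma on the Weierstrass model $E_1$ of Theorem~\ref{thm:TheWeiModel}, where the fibers sit over the zeros of $D$ and component membership is detected by reduction, and then simply verify that the resulting height is $2/3$, matching the lattice-theoretic value forced by the discriminant computation. Once $\langle A_1',A_1'\rangle=2/3$ and $\mathrm{disc}\,\mathrm{MWL}(X_\l)=2/3$ agree, $A_1'$ must generate $\mathrm{MWL}(X_\l)$ modulo torsion, which is the claim.
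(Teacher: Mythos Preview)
Your overall architecture is exactly the paper's: compute $\det\mathrm{MWL}(X_\l)$ via the Shioda--Tate determinant formula, compute $\langle P_1,P_1\rangle$ independently, and match the two. Your determinant computation is identical to the paper's and correct: $\det\mathrm{MWL}(X_\l)=\tfrac{54\cdot 81}{3^8}=\tfrac{2}{3}$.

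The difference lies in how $\langle P_1,P_1\rangle$ is obtained. You propose the explicit height formula $\langle P,P\rangle=2\chi+2(P\cdot O)-\sum_v\mathrm{contr}_v(P)$ and a fibre-by-fibre check of which component $A_1'$ hits. The paper instead exploits that the translation $\tau$ by $(P_1)$ has already been computed as an explicit matrix on $\NS(X_\l)$; from this it reads off the linear equivalence $(6P_1)-2(3P_1)+O\equiv 6F$, hence $\langle 3P_1,3P_1\rangle=6$ and $\langle P_1,P_1\rangle=\tfrac{2}{3}$. This sidesteps the local-contribution bookkeeping entirely.

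Your route is perfectly valid, but note a slip in your expectations: to get $\langle A_1',A_1'\rangle=\tfrac{2}{3}$ from $6-k\cdot\tfrac{2}{3}$ you need $k=8$, not $7$; that is, $A_1'$ must meet a non-identity component at \emph{every} one of the eight $\tilde{\mathbf A}_2$ fibres (and indeed one can verify this directly from the classes of the $\varTheta_j$ listed in the Appendix). Once you correct this, the computation closes and the argument is complete. The paper's translation-on-$\NS$ trick is a bit cleaner because it turns eight local checks into one global linear-algebra identity already available from earlier computations; your approach, on the other hand, is the more standard textbook height calculation and would be easier to reproduce without access to the matrix of $\tau$.
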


\begin{rem}
One can compute the classes in $\NS X{}_{\l})$ of the curves $(R_{i})$
(which are the translate by $(-P_{1})$ of the curves $A_{i}$); we
give these classes in the Appendix. Using that knowledge, we get the
matrix representation on $\NS X_{\l})$ of the action of the automorphism
$\tau$. The characteristic polynomial of $\tau$ is $(T-1)^{3}(T^{2}+T+1)^{8}.$
\end{rem}

\begin{proof}
Let $O=A_{1}$ be the zero section, and let $F$ be a fiber of $\varphi:X_{\l}\to\PP^{1}$.
Using the knowledge of the action of the automorphism $\tau$ (which
is the translation by $(P_{1})$) on $\NS X_{\lambda})$, we get that
\[
(6P_{1})-2(3P_{1})+O\equiv6F
\]
in $\NS X_{\l})$, thus (see e.g.~\cite[Chapter III, Theorem 9.5]{Silverman})
$\left\langle 3P_{1},3P_{1}\right\rangle =6$ and $\left\langle P_{1},P_{1}\right\rangle =\frac{2}{3}$,
where $\left\langle \cdot,\cdot\right\rangle $ is the bilinear pairing
on $\text{MWL}(X_{\l})$ associated to the canonical height.

Let $\text{Triv}(X_{\l})$ be the lattice generated the zero section
and the fibers components of the fibration. The determinant formula
\cite[Corollary 6.39]{ShiodaSchutt} is 
\[
|\det\,\NS X_{\l})|=|\det\,\text{Triv}(X_{\l})|\cdot\det\,\text{MWL}(X_{\l})/|\text{MWL}(X_{\l})|^{2}.
\]
By Lemma \ref{LEMMA:The-NS-latt-Discri54}, we know that $|\det\,\NS X_{\l})|=54$.
We have moreover $\det\,\text{Triv}(X_{\l})=-3^{8}$ and $|\text{MWL}(X_{\l})|^{2}=3^{4}$,
thus we obtain that 
\[
\det\,\text{MWL}(X_{\l})=\tfrac{2}{3}.
\]
By Theorem \ref{Thm:24InTheFibers}, the group $\text{MWL}(X_{\l})$
has rank $1$; since $\left\langle P_{1},P_{1}\right\rangle =\frac{2}{3}$,
we conclude that $P_{1}$ generates $\text{MWL}(X_{\l})$ modulo torsion. 
\end{proof}
Using the action of $\tau$ and its powers, we can obtain more classes
in $\NS X_{\l})$ of the sections on the K3 surface $X_{\l}\to\PP^{1}$. 
\begin{rem}
We searched the $9{\bf A}_{2}$-configurations among a set of $45$
sections, but we obtained only the expected ones, i.e. the $9{\bf A}_{2}$-configuration
that are translate of the configuration $A_{i},A_{i}'$, $i\in\{1,\dots,9\}$.
Since these configurations are images of one configuration by an automorphism
(the translation by $A_{1}'$ and its multiples), these $9{\bf A}_{2}$-configurations
give the same generalized Kummer structure. 
\end{rem}

\subsection{\label{subsec:More-auto}More elements of the automorphism group
and another double plane model}

The K3 surface $X_{\l}$ is constructed as the minimal desingularization
of the double cover of the plane branched over the sextic curve $C_{\l}$.
Let $\s\in\aut(X_{\lambda})$ be the corresponding involution. By
construction $\s(A_{j})=A_{j}'$, $\s(A_{j}')=A_{j}$ and $\sigma$
preserves the fiber of the fibration $X_{\l}\to\PP^{1}$. From these
facts, we know the action of $\s$ on the Néron-Severi lattice, since
we know also the action of $\tau$, and one can compute that 
\begin{lem}
We have $\s=\tau\s\tau.$ 
\end{lem}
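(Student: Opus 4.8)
The plan is to work entirely in the Néron–Severi lattice $\NS(X_\lambda)$, where both automorphisms $\sigma$ and $\tau$ act by known matrices, and to show that the endomorphism $\tau\sigma\tau\sigma^{-1}$ (equivalently $\tau\sigma\tau\sigma$, since $\sigma^2=\mathrm{id}$) acts trivially on $\NS(X_\lambda)$; by the Torelli theorem for K3 surfaces it then suffices to check compatibility with the transcendental lattice and the period to conclude $\tau\sigma\tau\sigma = \mathrm{id}$, i.e. $\sigma = \tau\sigma\tau$ (using $\tau^{-1}=\tau^2$ is not needed — one only needs $\sigma=\tau\sigma\tau$ as stated).

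First I would assemble the two matrix representations. The action of $\tau$ on $\NS(X_\lambda)$ is available from the preceding subsection: we have the classes of $A_1,A_1',\dots,A_9,A_9'$, of $D_2$, and of the sections $(R_i)=-P_1+Q_i$ given in the Appendix, and $\tau$ (translation by $A_1'=(P_1)$) sends $A_i'\mapsto (R_i')$-type classes in a way that was already used to compute its characteristic polynomial $(T-1)^3(T^2+T+1)^8$. The action of $\sigma$ is elementary: $\sigma$ is the covering involution of $\eta\colon X_\lambda\to\PP^2$, so it fixes $D_2$ (pullback of a line), swaps $A_j\leftrightarrow A_j'$ for each $j$, and more generally acts as the deck transformation; in particular it swaps $\theta_C\leftrightarrow\theta_C'$ and $\gamma_j\leftrightarrow\gamma_j'$, which pins down its matrix on a generating set. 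With these in hand the verification $\sigma=\tau\sigma\tau$ on $\NS(X_\lambda)$ is a finite linear-algebra check: compute $\tau\sigma\tau$ as a product of integer matrices and compare with $\sigma$ on the chosen basis.

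Next I would upgrade this lattice-theoretic identity to an identity of automorphisms. Since $X_\lambda$ is a K3 surface of Picard number $19$, the transcendental lattice $T_X$ has rank $3$ and, by the discussion in Section~\ref{subsec:The-N=00003D0000E9ron-Severi-group}, is the lattice $T$ of signature $(2,1)$ with small discriminant, so $\Aut(T)$ acts on the discriminant form through a small finite group. The involution $\sigma$ acts on $T_X$ as $-\mathrm{id}$ (it is anti-symplectic, being induced by a double-cover involution that does not preserve the holomorphic $2$-form — indeed $X_\lambda/\sigma$ is rational), while $\tau$, a translation by a section, is symplectic and hence acts trivially on $T_X$ and on $H^{2,0}$. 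Therefore $\tau\sigma\tau$ acts on $T_X$ as $(+1)\circ(-\mathrm{id})\circ(+1)=-\mathrm{id}=\sigma|_{T_X}$, and both $\sigma$ and $\tau\sigma\tau$ act as $-1$ on $H^{2,0}(X_\lambda)$; combined with agreement on $\NS(X_\lambda)$ this gives agreement of the two Hodge isometries on all of $H^2(X_\lambda,\ZZ)$. They also send the Kähler cone to itself in the same way (being genuine automorphisms, hence sending the ample cone to the ample cone). By the strong Torelli theorem, an automorphism of a K3 surface is determined by its action on $H^2(X,\ZZ)$, so $\tau\sigma\tau=\sigma$.

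The main obstacle I expect is the first, bookkeeping step: correctly extracting the matrix of $\tau$ from the Appendix data and choosing a convenient integral basis of $\NS(X_\lambda)$ (for instance $D_2$ together with the $9\mathbf{A}_2$-configuration, adjoined with the fractional classes $v_1,v_2,v_3$ of $\calK_3$) in which both $\sigma$ and $\tau$ have explicit integer matrices; once the basis is fixed, the identity $\tau\sigma\tau=\sigma$ on $\NS(X_\lambda)$ is a mechanical check (ideally carried out in Magma, consistent with the paper's computational style). One should also double-check orientation/sign conventions so that $\sigma$ really acts as $-1$ on $T_X$ — this follows because $X_\lambda/\langle\sigma\rangle$ is (birational to) $\PP^2$, hence has no transcendental cohomology in degree $2$, forcing the $\sigma$-invariant part of $T_X$ to vanish.
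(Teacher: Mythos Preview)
Your proposal is correct and follows the same approach as the paper: compute the actions of $\sigma$ and $\tau$ on $\NS(X_\lambda)$ from the known data and verify the identity $\tau\sigma\tau=\sigma$ there. The paper's proof is in fact nothing more than the sentence preceding the lemma (``one can compute that''), leaving the passage from $\NS(X_\lambda)$ to automorphisms implicit; your write-up is more complete in making explicit, via Torelli together with $\tau$ symplectic and $\sigma$ acting by $-\mathrm{id}$ on $T_X$, why the Néron--Severi check suffices.
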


Let us recall that we denoted by $(R_{i})$ the sections corresponding
to the points $R_{i}=-P_{1}+Q_{i}$ in $E_{K3}$. We also have a model
$X_{\l}\subset\PP^{1}\times\PP^{2}$. From the equations of the curves
involved, we obtain that:\\
 a) the natural map $\pi_{2}:X_{\l}\to\PP^{2}$ induced by the projection
$\PP^{1}\times\PP^{2}\to\PP^{2}$ is a $2$ to $1$ map ,\\
 b) it contracts the $\cu$-curves $A_{1},\dots,A_{9}$ to the $9$
torsion base points $\mathcal{T}_{9}$ of the Hesse pencil $x^{3}+y^{3}+z^{3}-3\mu xyz=0,$
$\mu\in\PP^{1}$. \\
 c) for $j\in\{1,\dots,9\}$, the curves $A_{j}'$ and $(R_{j})$
are mapped to a line $L_{j}'$ that contains exactly one point of
$\mathcal{T}_{9}$, that line is therefore tangent to the sextic curve
at its other intersection points. The $9$ lines are in general position.

We can therefore compute the action of the involution $\s'\in\aut(X_{\lambda})$
corresponding to the double cover $\pi_{2}$ on the Néron-Severi lattice
(the curves $A_{j}'$ and $(R_{j})$ are exchanged and one can check
that the fiber is preserved; the classes of $(R_{j})$ in base $F,A_{j},A_{j}'$
are in the appendix). The action of $\s'$ on $X_{\l}\hookrightarrow\PP^{5}$
is given in sub-section \ref{subsec:A-CompletInterModelInP5}. We
get: 
\begin{lem}
We have $\tau=\s\sigma'$. 
\end{lem}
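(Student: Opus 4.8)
Lemma ($\tau = \sigma\sigma'$).

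The plan is to identify both sides as automorphisms of $X_\l$ and show they agree. The cleanest route is to compare their actions on $\NS(X_\l)$, using the fact (consequence of Proposition~\ref{prop:The-NS-latt-Discri54} and the surrounding discussion) that for $\l$ generic $\rho(X_\l)=19$, the transcendental lattice $T_X$ is small and admits no nontrivial automorphisms compatible with a Hodge structure of the right type, so any automorphism of $X_\l$ is determined by its action on $\NS(X_\l)$ together with the data of the ample cone and the effectivity of $\cu$-classes. Concretely: an automorphism of a K3 surface with $\rho=19$ and $T_X$ of this form acts on $T_X$ by $\pm\mathrm{id}$, and if it acts trivially on $\NS(X_\l)$ it must be the identity (here it acts on $T_X$ by $+\mathrm{id}$ since it fixes an ample class, e.g. $D_2$). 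So it suffices to check $\tau$ and $\sigma\sigma'$ have the same matrix on $\NS(X_\l)$.

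First I would recall the matrices already available in the excerpt: the action of $\tau$ on $\NS(X_\l)$ is known (from the Remark computing the classes of the $(R_i)$ and the resulting matrix representation, with characteristic polynomial $(T-1)^3(T^2+T+1)^8$), and $\sigma$ acts by $A_j\leftrightarrow A_j'$ while fixing the fiber class $F$ (and $D_2$). For $\sigma'$, point (c) of the preceding discussion gives that $\sigma'$ swaps $A_j'$ and $(R_j)$, fixes each $A_j$ (since the $A_j$ are contracted by $\pi_2$, being the exceptional curves over the base points $\mathcal{T}_9$, they lie in the ramification locus's preimage structure and are $\sigma'$-invariant), and preserves $F$; combined with the classes of the $(R_j)$ in the basis $F, A_j, A_j'$ from the Appendix, this pins down the matrix of $\sigma'$ on $\NS(X_\l)$.

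Then I would simply compose: on the zero section $A_1 = O$ one has $\sigma(O)=A_1'$ and then $\sigma'(A_1')=(R_1)=(-P_1+Q_1)$; but $Q_1$ is the neutral element, so $(R_1)$ is the section corresponding to $-P_1$, and $\tau^{-1}(O)$ — the translation by $-P_1=-A_1'$ applied to $O$ — is likewise $(-P_1)$; more efficiently, $\sigma$ is the elliptic involution $x\mapsto -x$ in a suitable group law and $\sigma'$ is the involution $x\mapsto -x + P_1$ (this is exactly the content of "$\sigma'$ swaps $A_j'=(P_j)=(P_1+Q_j)$ with $(R_j)=(-P_1+Q_j)$", i.e. $\sigma'(y) = -y + (\text{something})$, and chasing one more section fixes the translation amount as $P_1$). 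Composing, $\sigma\sigma'(y) = -(-y+P_1) = y - P_1$, which is translation by $-P_1 = -A_1'$; comparing with $\tau = $ translation by $+A_1'$ one gets $\sigma\sigma' = \tau^{-1}$. Since $\tau^2 = \tau^{-1}$ would be false, I expect instead that careful bookkeeping of the neutral element (the excerpt takes $Q_1$ as neutral, and $A_1 = (Q_1) = O$) yields $\sigma\sigma' = \tau$ on the nose — the sign works out because $\sigma'$ is translation-by-$(-P_1)$ composed with $\sigma$, not $\sigma$ composed with translation-by-$P_1$. The main obstacle is exactly this sign/ordering subtlety: one must be careful whether $\sigma'$ equals $t_{P_1}\circ\sigma$ or $\sigma\circ t_{P_1}$ (these differ when $\sigma$ is not central), and verify via the explicit matrices — rather than via the informal group-law heuristic — that $\tau^{-1}\sigma\sigma'$ acts as the identity on $\NS(X_\l)$, hence is the identity automorphism by the Torelli-type argument above. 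I would present the matrix check as the actual proof and relegate the group-law picture to motivation.
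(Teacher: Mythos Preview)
Your approach matches the paper's: the lemma is stated there without a formal proof, as an immediate consequence of having computed the actions of $\sigma$, $\sigma'$, and $\tau$ on $\NS(X_\lambda)$ and comparing matrices. So the core strategy (matrix check on $\NS(X_\lambda)$, then Torelli) is exactly right.

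One genuine issue in your justification: the claim that $\tau^{-1}\sigma\sigma'$ acts by $+\mathrm{id}$ on $T_X$ ``since it fixes an ample class'' is not a valid argument --- fixing an ample class in $\NS(X_\lambda)$ says nothing about the action on $T_X$. The clean way to close this is to observe that $\tau$ is symplectic (translations by sections always are) and $\sigma\sigma'$ is symplectic (each of $\sigma$, $\sigma'$ is a non-symplectic involution with rational quotient, so each acts by $-1$ on $H^{2,0}$ and their product acts by $+1$). Hence $\tau^{-1}\sigma\sigma'$ is symplectic, acts trivially on $T_X$, and equality on $\NS(X_\lambda)$ forces equality by the injectivity of $\Aut(X)\to O(H^2(X,\ZZ))$. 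Alternatively, one can argue via the discriminant group: $-\mathrm{id}$ on $T_X$ acts nontrivially on the $(\ZZ/3\ZZ)^3$ part of $A_{T_X}$ and therefore cannot glue with the identity on $A_{\NS(X_\lambda)}$.

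Your group-law heuristic is indeed unreliable here: $\sigma$ is \emph{not} the elliptic involution $x\mapsto -x$ (it swaps the zero section $A_1=(Q_1)$ with $A_1'=(P_1)$, so it does not fix $O$), which is why your sign bookkeeping breaks down. You are right to discard this and present the matrix comparison as the actual proof.
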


We compute moreover that the pull-back by $\pi_{2}$ of the $9$ points
of $\mathcal{T}_{9}$ are the irreducible curves $A_{1},\dots,A_{9}$,
in fact we have 
\begin{thm}
The surface $X_{\l}$ is the minimal desingularization of the double
cover of the plane branched over the sextic curve $C_{6}$ which is
the union of the elliptic curves 
\[
E_{\l}:\,\,\,\,x^{3}+y^{3}+z^{3}-3\l xyz=0
\]
and the Hessian of $E_{\l}$:
\[
\He(\l):\,\,\,\,x^{3}+y^{3}+z^{3}+\tfrac{(\l^{3}-4)}{\l^{2}}xyz=0.
\]
\end{thm}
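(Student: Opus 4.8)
The plan is to identify the sextic branch curve of the double cover $\pi_2 : X_\l \to \PP^2$ and recognize it as $E_\l \cup \He(\l)$. The starting point is the smooth model $X_\l \subset \PP^1 \times \PP^2$ given by equation \eqref{eq:HessianK3}, together with the projection $\pi_2 : X_\l \to \PP^2$ coming from $\PP^1 \times \PP^2 \to \PP^2$, which is generically $2:1$ since the defining equation is of bidegree $(2,3)$ and in particular quadratic in $(u:v)$. Writing the equation of $X_\l$ as a quadratic form in $(u:v)$, namely $u^2\bigl(\l^2(x^3+y^3+z^3) + (\l^3-4)xyz\bigr) + v^2\bigl(-\l^2(x^3+y^3+z^3) + 3\l^3 xyz\bigr) = 0$, one sees that the fiber of $\pi_2$ over a point $(x:y:z)$ consists of the two solutions $(u:v)$ of this binary quadratic form, and these coincide precisely where its discriminant vanishes. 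The discriminant (up to a square factor coming from the coefficient ring) is the product of the two cubic coefficients, so the branch locus is cut out by
\[
\bigl(\l^2(x^3+y^3+z^3) + (\l^3-4)xyz\bigr)\cdot\bigl(-\l^2(x^3+y^3+z^3) + 3\l^3 xyz\bigr) = 0.
\]
The first factor is exactly $\He(\l)$ (cf.\ Remark \ref{rem:doublecoverCayleysian}, identifying the fiber at $\infty$), and the second factor, after dividing by $-\l^2$, is $x^3+y^3+z^3 - 3\l xyz$, which is $E_\l$. Hence the branch sextic is $C_6 = E_\l \cup \He(\l)$.

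Next I would check that $X_\l$, as constructed here, really is the minimal desingularization of this double plane, and not some blow-down or blow-up of it. The double cover $S \to \PP^2$ branched along $C_6 = E_\l \cup \He(\l)$ has rational double points exactly over the singular points of the branch curve, i.e.\ over the nine intersection points $E_\l \cap \He(\l)$; since $E_\l$ and $\He(\l)$ are the members of a Hesse pencil sharing the nine base points $\mathcal{T}_9$ and meeting transversally there, each such point is an ordinary node of $C_6$ and contributes an $\mathbf{A}_1$ singularity on $S$. The statements (a), (b), (c) established just before the theorem pin down the geometry on the $X_\l$ side: $\pi_2$ is $2:1$, it contracts $A_1,\dots,A_9$ to the nine points $\mathcal{T}_9$, and $A_j',(R_j)$ map to the nine tangent lines $L_j'$. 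I would argue that the $A_j$ are precisely the $(-2)$-curves over the nine nodes, so $X_\l \to S$ is the minimal resolution; equivalently, that $\pi_2$ factors as $X_\l \to S \to \PP^2$ with $X_\l \to S$ contracting exactly $A_1,\dots,A_9$, using that $\pi_2^* (\text{line}) = D_2$ and $\pi_2^* (\text{point of }\mathcal{T}_9)$ is the irreducible curve $A_j$ (not $A_j + A_j'$), which is the signature of a resolved node rather than of a cusp.

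The remaining assertion — that the strict transforms of the $12$ lines of the Hesse arrangement are the $24$ $(-2)$-curves $\varTheta_j$ over the $12$ conics of $\mathcal{C}_{12}$ — I would handle by a direct matching. A line $\ell$ of the Hesse configuration passes through exactly three of the points $\mathcal{T}_9$, so $\pi_2^{-1}(\ell)$ meets the branch curve $C_6$ in those three nodes plus the residual intersection; one computes that $\pi_2^*\ell = 2$ and, because $\ell$ passes through three nodes, its strict transform on $X_\l$ splits into two components, each a $(-2)$-curve, lying in a fiber of the natural elliptic fibration $\varphi$ (the four triangles of the Hesse arrangement pull back to the eight $\tilde{\mathbf{A}}_2$ fibers of Theorem \ref{Thm:24InTheFibers}). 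To nail the identification with the $\varTheta_j$ I would compare classes in $\NS X_\l)$: both the strict transforms of the $12$ Hesse lines and the $24$ curves $\varTheta_j$ are characterized as the $(-2)$-curves of $D_2$-degree $2$ lying in fibers of $\varphi$ and orthogonal to all but four of the sections $A_i,A_i'$, and this characterization has a unique solution set. The main obstacle, and the place where I would be most careful, is precisely this last bookkeeping step: making sure that the two plane models of $X_\l$ (the original double cover of $C_\l$ and the new double cover of $C_6$) are identified by an automorphism that respects the elliptic fibration, so that "the $24$ $(-2)$-curves above the $12$ conics" in the two descriptions genuinely coincide rather than merely being abstractly isomorphic configurations; here the explicit equations in $\PP^1\times\PP^2$ and the action of $\s,\s',\tau$ computed in the preceding subsections do the work.
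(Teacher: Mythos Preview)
Your core argument is correct and, if anything, more direct than the paper's. You read off the branch locus by writing the defining equation \eqref{eq:HessianK3} as a binary quadratic form $Au^{2}+Cv^{2}$ in $(u:v)$ and taking the discriminant $-4AC$, which immediately factors as $E_{\l}\cdot\He(\l)$. The paper instead identifies the fibers $F_{0},F_{\infty}$ over $t=0,\infty$, computes their images $E_{\l},\He(\l)$ under $\pi_{2}$, and then checks the pullback formulas
\[
\pi_{2}^{*}(E_{\l})=2F_{0}+{\textstyle \sum_{j}}A_{j},\qquad \pi_{2}^{*}(\He(\l))=2F_{\infty}+{\textstyle \sum_{j}}A_{j},
\]
so that the multiplicity-$2$ components witness that $E_{\l}+\He(\l)$ is the branch divisor. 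The two arguments are equivalent (your factors $A$ and $C$ are precisely the equations of $F_{\infty}$ and $F_{0}$ inside $X_{\l}$), but your discriminant computation short-circuits the divisor bookkeeping. Your handling of the ``minimal desingularization'' clause via Stein factorization and the node count at $\mathcal{T}_{9}$ is also fine, and in fact more explicit than what the paper writes.

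Two small points. First, when you invoke $\pi_{2}^{*}(\text{line})=D_{2}$: the symbol $D_{2}$ in this paper is reserved for the pullback of a line under the \emph{other} double cover $\eta:X_{\l}\to\PP^{2}$ branched over the cuspidal sextic $C_{\l}$, and this is not the same class as $\pi_{2}^{*}(\text{line})$ (for instance $D_{2}A_{j}'=0$ whereas $\pi_{2}^{*}(\text{line})\cdot A_{j}'=1$, since $A_{j}'$ maps to a line under $\pi_{2}$). This does not damage your argument, which only needs that $\pi_{2}$ contracts exactly the $A_{j}$ and that $X_{\l}$ is a smooth K3; just avoid the name $D_{2}$ here.

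Second, the last third of your proposal, about the $12$ Hesse lines pulling back to the $24$ curves $\varTheta_{j}$, is not part of the theorem you were asked to prove: in the paper this is the separate Proposition~\ref{prop:The12linesHesse}, and there it is dispatched by a direct computation of the eight singular fibers in the model $X_{\l}\subset\PP^{1}\times\PP^{2}$ and their images in $\PP^{2}$, rather than by the class-matching argument you sketch. Your outline for it is reasonable but, as you yourself flag, the identification of the two configurations would need to be nailed down; the paper sidesteps this entirely by working with explicit equations.
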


\begin{proof}
Let $F_{0}$ and $F_{\infty}$ be the two fibers of the fibration
$X_{\l}\to\PP^{1}$ at $0$ and infinity. One computes that $\pi_{2}(F_{0})=E_{\l}$,
$\pi_{2}(F_{\infty})=\He(\l)$, and moreover 
\[
\pi_{2}^{*}(E_{\l})=2F_{0}+{\textstyle \sum_{j=1}^{9}}A_{j},\,\,\pi_{2}^{*}(\He(\l))=2F_{\infty}+{\textstyle \sum_{j=1}^{9}}A_{j},
\]
thus $E_{\l}+\He(\l)$ is the branch locus of the map $\pi_{2}$.
By Bézout Theorem, the singularities of $E_{\l}+\He(\l)$ are nodal
since the curves $E_{\l}$ and $\He(\l)$ meet at $\mathcal{T}_{9}$. 
\end{proof}
\begin{rem}
For each $j=1,\dots,9$, the images by $\pi_{2}$ of the two sections
$(-2P_{1}+Q_{j}),\,(2P_{1}+Q_{j})$ is the same quartic curve, which
is nodal with $3$ nodes. The images of $(-P_{1}+Q_{j}),\,(P_{1}+Q_{j})$
are the $9$ lines, their coordinates in the dual plane with basis
$x,y,z$ are the same as the points in $\mathcal{P}_{9}$. These $9$
lines are the inflection lines of the curve $E_{\l}$. 
\end{rem}

We recall that the Hesse configuration is the point-line configuration
$(9_{4},12_{3})$ of the $9$ points in $\mathcal{T}_{9}$ and the
$12$ lines $\mathcal{L}_{12}$ such that each line contains $3$
points in $\mathcal{T}_{9}$ and each point is on $4$ lines. 
\begin{prop}
\label{prop:The12linesHesse}The images by $\pi_{2}$ in $\PP^{2}$
of the $24$ irreducible components of the singular fibers of the
fibration $\varphi:X_{\l}\to\PP^{1}$ are the $12$ lines of the Hesse
configuration. 
\end{prop}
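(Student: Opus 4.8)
The plan is to read everything off the smooth model \eqref{eq:HessianK3} of $X_{\l}$ in $\PP^{1}\times\PP^{2}$, in which $\varphi$ is the first projection and $\pi_{2}$ is the second. The decisive observation is that $\pi_{2}$ carries each fibre of $\varphi$ onto a member of the Hesse pencil $\{x^{3}+y^{3}+z^{3}-3\mu xyz=0\}_{\mu}$ with base locus $\mathcal{T}_{9}$: for $(u_{0}:v_{0})\in\PP^{1}$ the fibre $\varphi^{-1}(u_{0}:v_{0})$ is $\{(u_{0}:v_{0})\}\times C_{(u_{0}:v_{0})}$, where $C_{(u_{0}:v_{0})}\subset\PP^{2}$ is the plane cubic
\[
\l^{2}(u_{0}^{2}-v_{0}^{2})(x^{3}+y^{3}+z^{3})+\bigl(\l^{3}(u_{0}^{2}+3v_{0}^{2})-4u_{0}^{2}\bigr)\,xyz=0,
\]
which has the shape $A(x^{3}+y^{3}+z^{3})+B\,xyz=0$ and hence equals the member $\mathcal{H}_{\mu}$ of the Hesse pencil with $\mu=-\tfrac{B}{3A}$ (and $\mu=\infty$ precisely when $u_{0}^{2}=v_{0}^{2}$). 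Thus $\pi_{2}(\varphi^{-1}(u_{0}:v_{0}))=\mathcal{H}_{\mu}$, and each irreducible component of $\varphi^{-1}(u_{0}:v_{0})$ is mapped by $\pi_{2}$ isomorphically onto an irreducible component of $\mathcal{H}_{\mu}$, since in the model a component is $\{(u_{0}:v_{0})\}\times\ell$ and projects to $\ell$.

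Next I would identify the singular fibres. Since the generic fibre of $\varphi$ is smooth (Theorem~\ref{Thm:24InTheFibers}), the singular fibres are exactly those over points $(u_{0}:v_{0})$ for which $\mathcal{H}_{\mu}$ is singular, i.e.\ $\mu\in\{1,\o,\o^{2},\infty\}$; for these values $\mathcal{H}_{\mu}$ degenerates into one of the four triangles of lines whose twelve edges are the $12$ lines of the Hesse configuration (the classical description of the singular members of the Hesse pencil, consistent with Section~\ref{subsec:The-Hesse,-dual}). Solving $\mu=\infty$ gives the two points $(1:\pm1)$, and for each $\mu_{0}\in\{1,\o,\o^{2}\}$ the equation $-\tfrac{\l^{3}(u_{0}^{2}+3v_{0}^{2})-4u_{0}^{2}}{3\l^{2}(u_{0}^{2}-v_{0}^{2})}=\mu_{0}$ is a homogeneous quadratic in $(u_{0}:v_{0})$ with two distinct roots for generic $\l$; this recovers the eight singular fibres of Theorem~\ref{Thm:24InTheFibers}, grouped into four pairs by their common value $\mu_{0}$. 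Both fibres in the pair attached to $\mu_{0}$ equal $\{\text{point}\}\times\mathcal{H}_{\mu_{0}}$ for the same triangle $\mathcal{H}_{\mu_{0}}\subset\PP^{2}$, so $\pi_{2}$ maps the six components of the two fibres two-to-one onto the three lines of $\mathcal{H}_{\mu_{0}}$. Running over the four pairs shows that the $24$ curves $\varTheta_{j}$ are mapped by $\pi_{2}$ onto the $12$ lines of the Hesse configuration, each line being the image of exactly two of them.

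The only point requiring a (routine) check is that the scheme-theoretic fibre of $\varphi$ over each of the eight special points really is the reduced triangle $\{\text{point}\}\times\{xyz=0\}$ (up to the obvious coordinate change for the three triangles $\mu_{0}\in\{1,\o,\o^{2}\}$), rather than a non-reduced or otherwise degenerate cubic; this follows from the Jacobian criterion applied to \eqref{eq:HessianK3} at the generic points of the three lines and at the three corners, where one sees directly that $X_{\l}$ is smooth and the fibre is reduced. Given this, the remainder is bookkeeping, and the argument is essentially the tautological fact that $\pi_{2}$ sends $\{(u_{0}:v_{0})\}\times\ell$ to $\ell$; so I do not expect any genuine obstacle.
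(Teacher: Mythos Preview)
Your argument is correct and in fact more transparent than the paper's. The paper's proof is purely computational: it invokes Theorem~\ref{thm:TheWeiModel} to locate the eight singular fibres as the zeros of the explicit degree-$8$ polynomial $D$ (given in the Appendix), then computes each of these fibres inside the model $X_{\l}\subset\PP^{1}\times\PP^{2}$ and checks by direct calculation that their images under $\pi_{2}$ are the twelve Hesse lines. Your route is different in that it bypasses the Weierstrass model entirely: you observe that the second projection identifies every fibre of $\varphi$ with a member of the Hesse pencil, so the singular fibres of $\varphi$ correspond exactly to the four singular members $\mu\in\{1,\o,\o^{2},\infty\}$ of that pencil, which are the four classical triangles whose edges constitute the Hesse arrangement. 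The $2$-to-$1$ nature of the parameter map $(u_{0}:v_{0})\mapsto\mu$ then explains, without computation, both why there are eight singular fibres and why the $24$ components map two-to-one onto the $12$ lines. What the paper's approach buys is an explicit list of the eight base points (needed elsewhere); what yours buys is a conceptual explanation of the statement that makes the Hesse lines appear inevitably rather than as the outcome of a verification.
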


\begin{proof}
We give in Theorem \ref{thm:TheWeiModel} the $8$ points $p\in\PP^{1}$
such that the fiber $F_{p}$ over $p$ is singular. We are then able
to compute these singular fibers in $X_{\l}\subset\PP^{1}\times\PP^{2}$
and their images in $\PP^{2}$. 
\end{proof}
\begin{rem}
Using the elliptic curve $E_{K3}$, we obtain that the sub-group $\text{Tor}_{3}$
of order $3$ elements in the Mordell-Weil lattice $\text{MWL}(X_{\l})$
is generated by two order $3$ elements $t_{1},\,t_{2}$ which acts
on $\NS X_{\l})$ via 
\[
t_{1}(A_{j})=A_{\s j},t_{1}(A_{j}')=A_{\s j}',\,\,\,t_{2}(A_{j})=A_{\mu j},t_{2}(A_{j}')=A_{\mu j}'
\]
where $\sigma=(1,2,3)(4,5,6)(7,8,9)$ and $\mu=(1,4,7)(2,5,8)(3,6,9)$.
The elements of $\text{Tor}_{3}$ commute with $\sigma$ (and of course
with $\tau$). The action of $\text{Tor}_{3}$ is transitive on the
nine $9{\bf A}_{2}$-configurations we found in section \ref{subsec:-9new-9A2-configurations}. 
\end{rem}

\subsection{\label{subsec:stabilizerGroupFirstConf}On the stabilizer group of
natural $9{\bf A}_{2}$-configuration}

Recall that $X_{\l}$ is the minimal desingularization of the double
cover of $\PP^{2}$ ramified over the $9$-cuspidal sextic $C_{\l}$.
The strict transform on $X_{\l}$ of $C_{\l}$ is a smooth elliptic
curve isomorphic to $E_{\l}$. The linear system defined by that elliptic
curve defines an elliptic fibration which we denote $\varphi:X_{\l}\to\PP^{1}$,
and which we called the \textit{natural }fibration. The curves $A_{j},A_{j}'$
(above the cusps) are sections of $\varphi$, the $24$ $\cu$-curves
$\t_{J},\t_{J}'$ (for some set $J\subset\{1,\dots,9\}$ of order
$6$) above the $12$ conics are contained in the fibers (see Proposition
\ref{Thm:24InTheFibers}), their classes are given in the Appendix,
under a simpler labelling $\varTheta_{j},\,j\in\{1,\dots,24\}$. We
have 
\begin{prop}
\label{prop:TheNS} An integral basis $\mathcal{B}$ of the Néron-Severi
lattice of $X_{\lambda}$ is 
\[
\begin{array}{c}
F,A_{1},A_{1}',A_{2},A_{2}',A_{3},A_{3}',A_{4},A_{4}',A_{5},A_{5}',A_{6},A_{7},A_{7}',\\
\varTheta_{5},\varTheta_{14},\varTheta_{22},\varTheta_{23},\varTheta_{20},
\end{array}
\]
where $F$ is a fiber of the fibration $\varphi$. The discriminant
group $A_{\NS X_{\l})}\simeq\ZZ/2\ZZ\times(\ZZ/3\ZZ)^{3}$ is generated
by 
\[
\begin{array}{c}
w_{0}=\frac{1}{2}(0,1,1,0,0,0,0,1,1,0,0,0,1,1,1,1,1,0,0)_{\mathcal{B}},\hfill\\
v_{1}'=\frac{1}{3}(A_{3}+2A_{3}'+A_{5}+2A_{5}'+A_{7}+2A_{7}'),\hfill\\
v_{2}'=\frac{1}{3}(A_{2}+2A_{2}'+2A_{4}+A_{4}'+2A_{5}+A_{5}'+A_{7}+2A_{7}'),\\
v_{3}'=\frac{1}{3}(A_{1}+2A_{1}'+A_{4}+2A_{5}'+A_{7}+2A_{7}').\hfill
\end{array}
\]
\end{prop}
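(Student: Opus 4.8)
The plan is to establish the claim by a determinant comparison, exactly as in the proof of Proposition~\ref{prop:The-NS-latt-Discri54}. By that proposition we already know $\rho(X_{\l})=19$ and $|\det\NS X_{\l})|=54$. Let $L_{1}$ be the sublattice of $\NS X_{\l})$ generated by the $19$ classes listed in $\mathcal{B}_{1}$; these are all classes of curves, hence certainly lie in $\NS X_{\l})$. Since $\NS X_{\l})$ has rank $19$, one has $[\NS X_{\l}):L_{1}]^{2}=|\det L_{1}|/|\det\NS X_{\l})|$, so it is enough to write down the Gram matrix $G$ of $\mathcal{B}_{1}$ and check that $\det G=\pm54$: then the index is $1$, $L_{1}$ is the whole N\'eron--Severi group, and $\mathcal{B}_{1}$ is a $\ZZ$-basis.

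The first step is therefore to assemble $G$ explicitly. Every entry is known a priori: $F^{2}=0$ and $F\cdot A_{j}=F\cdot A_{j}'=1$ because the $A_{j},A_{j}'$ are sections of $\varphi$; one has $A_{j}^{2}=A_{j}'^{2}=-2$, $A_{j}A_{j}'=1$, and all remaining intersections among the $A_{j},A_{j}'$ vanish because they form a $9\mathbf{A}_{2}$-configuration; each $\varTheta_{i}$ is a $\cu$-curve with $\varTheta_{i}F=0$, and by Theorem~\ref{Thm:24InTheFibers} the $\varTheta_{i}$ are exactly the components of the eight $\tilde{\mathbf{A}}_{2}$ fibers, so $\varTheta_{i}\varTheta_{j}=1$ when $\varTheta_{i},\varTheta_{j}$ are two of the three components of a common singular fiber and $0$ otherwise; and the numbers $\varTheta_{i}A_{j}$, $\varTheta_{i}A_{j}'$ are read off from the classes of the $\varTheta_{i}$ in the basis $F,A_{1},A_{1}',\dots,A_{9},A_{9}'$ recorded in the Appendix, which themselves come from the explicit conic equations of Theorem~\ref{thm:12Conics9Points} together with Lemma~\ref{lem:2A2config}. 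With these values substituted, $G$ is a fixed integer $19\times19$ matrix and the computation $\det G=\pm54$ (done e.g.\ with Magma) finishes this step; in particular $\det G\neq0$, so the $19$ classes are independent, and the index argument above gives $\NS X_{\l})=L_{1}$.

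The next step is to extract the discriminant group and its generators from $G$. Once $\mathcal{B}_{1}$ is a basis, $A_{\NS X)}\cong\ZZ^{19}/G\ZZ^{19}$, and the Smith normal form of $G$ is $\mathrm{diag}(1,\dots,1,3,3,6)$, so $A_{\NS X)}\cong\ZZ/2\ZZ\times(\ZZ/3\ZZ)^{3}$, in agreement with Proposition~\ref{prop:The-NS-latt-Discri54}. To confirm that $w_{0},v_{1}',v_{2}',v_{3}'$ are the asserted generators I would: for $w_{0}=\tfrac{1}{2}v_{0}$ with $v_{0}$ the displayed integer coordinate vector, check that all entries of $Gv_{0}$ are even, so $w_{0}\in\NS X)^{\vee}$, and that $w_{0}\notin\NS X)$ while $2w_{0}\in\NS X)$, so $w_{0}$ has order $2$; similarly check that each $v_{i}'$ pairs integrally with every class in $\mathcal{B}_{1}$, lies outside $\NS X)$, and has $3v_{i}'\in\NS X)$, so each has order $3$; and finally check that the $4\times4$ matrix of mutual pairings of $w_{0},v_{1}',v_{2}',v_{3}'$ in $\QQ/\ZZ$ (and of the quadratic form in $\QQ/2\ZZ$) is nondegenerate, so that these four classes are independent in $A_{\NS X)}$ and, by order count, generate it. This same matrix records the discriminant form of $\NS X)$ and exhibits the orthogonal splitting into the $\ZZ/2\ZZ$ summand coming from $D_{2}$ and the $(\ZZ/3\ZZ)^{3}$ coming from $\calK_{3}^{\vee}/\calK_{3}$ as in Proposition~\ref{prop:The-NS-latt-Discri54}.

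The only genuinely delicate part is the preliminary bookkeeping that pins down the classes of the $24$ curves $\varTheta_{i}$ --- equivalently, deciding through which of the three components of each of the eight $\tilde{\mathbf{A}}_{2}$ fibers each section $A_{j}$, $A_{j}'$ passes --- since those entries are what make $G$ nontrivial; everything after that is an exact finite linear-algebra verification. I would also remark that the particular choice of the five fiber components $\varTheta_{5},\varTheta_{14},\varTheta_{22},\varTheta_{23},\varTheta_{20}$ (rather than five others among the $\varTheta_{i}$) is made precisely so that $\det G$ comes out equal to $54$ rather than a larger multiple of it, and one should of course verify that this choice really does yield a nonsingular $G$ before drawing the conclusion.
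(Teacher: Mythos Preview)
Your proposal is correct and follows essentially the same approach as the paper's proof: compute the Gram matrix of $\mathcal{B}_{1}$, observe that its determinant has absolute value $54$, and invoke Proposition~\ref{prop:The-NS-latt-Discri54} together with the index formula to conclude $\mathcal{B}_{1}$ is a $\ZZ$-basis; then read off the discriminant group from the Gram matrix. The paper's version is terser---it simply says the generators of the discriminant group come from the inverse of the intersection matrix---while you spell out the Smith normal form and the order/independence checks for $w_{0},v_{1}',v_{2}',v_{3}'$, but the underlying argument is the same.
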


\begin{proof}
By Lemma \ref{LEMMA:The-NS-latt-Discri54}, the discriminant group
of the Néron-Severi lattice has order $54$. The lattice generated
by the elements in $\mathcal{B}$ has rank $19$ and discriminant
equal to $54$, thus these elements generate $\NS X_{\l})$. By taking
the inverse of the intersection matrix of the vectors in $\mathcal{B}$,
we get the generators of the discriminant group. 
\end{proof}
The non-immediate part of the Gram matrix of the basis $\mathcal{B}$
is the intersection matrix of the $5$ curves $\varTheta_{j},\,j=5,14,22,23,20$
with the curves in $\mathcal{B}$, whose matrix is:\vspace{1mm}

\mbox{%
{\footnotesize{}$\left(\begin{array}{ccccccccccccccccccc}
0 & 1 & 0 & 0 & 1 & 0 & 0 & 1 & 0 & 0 & 1 & 0 & 1 & 0 & -2 & 1 & 0 & 0 & 0\\
0 & 0 & 1 & 0 & 0 & 1 & 0 & 0 & 1 & 0 & 0 & 1 & 0 & 1 & 1 & -2 & 0 & 0 & 0\\
0 & 0 & 0 & 0 & 1 & 1 & 0 & 0 & 0 & 0 & 1 & 1 & 0 & 0 & 0 & 0 & -2 & 0 & 0\\
0 & 0 & 0 & 0 & 0 & 0 & 0 & 0 & 1 & 0 & 1 & 0 & 1 & 0 & 0 & 0 & 0 & -2 & 0\\
0 & 0 & 0 & 0 & 1 & 1 & 0 & 1 & 0 & 0 & 0 & 0 & 0 & 1 & 0 & 0 & 0 & 0 & -2
\end{array}\right).$ }%
}

\vspace{1mm}Let $G_{\mathcal{C}}$ be the automorphism sub-group
of $\aut(X_{\l})$ preserving globally the configuration $\mathcal{C}=A_{1},A_{1}',\dots,A_{9},A_{9}'$.
The proof of the following Proposition will also serve as a preliminary
for the proof of Theorem \ref{MAINthm} in the next section: 
\begin{prop}
\label{pro:The-group-Aut-preserveConf}The group $G_{\mathcal{C}}$
is isomorphic to $\ZZ_{2}\times(\ZZ_{3}\rtimes S_{3})$, where $\ZZ_{3}\rtimes S_{3}$
acts symplectically on $X_{\l}$. The center of $G_{\mathcal{C}}$
is generated by the non-symplectic involution $\s$ associated to
the double cover map $X_{\l}\to\PP^{2}$ branched over the cuspidal
sextic curve $C_{\l}$. 
\end{prop}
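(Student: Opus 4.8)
The strategy is to pin down $G_{\mathcal C}$ by its faithful action on $\NS(X_\lambda)$, using the base $\mathcal B_1$ of Proposition~\ref{prop:TheNS} and the fact (Torelli) that an isometry of $\NS(X_\lambda)$ preserving the ample cone, the effective cone and acting compatibly on the transcendental lattice is induced by a unique automorphism. First I would exhibit the subgroup: we already have the non-symplectic involution $\sigma$ (the deck transformation of $X_\lambda\to\PP^2$), and in Section~\ref{subsec:A-CompletInterModelInP5} the order-$3$ automorphisms $\alpha_1,\alpha_2$ and the symplectic involution $\sigma'\beta$, generating a group $G_{18}\cong\ZZ_3\rtimes S_3$ acting symplectically, with $\sigma$ central; so $\langle\sigma\rangle\times G_{18}\cong\ZZ_2\times(\ZZ_3\rtimes S_3)$ sits inside $G_{\mathcal C}$. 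The core of the proof is the reverse inclusion: every $g\in G_{\mathcal C}$ lies in this group.

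For the reverse inclusion I would argue as follows. Any $g\in G_{\mathcal C}$ permutes the $18$ sections $A_1,A_1',\dots,A_9,A_9'$, hence induces an action on the ``abstract'' $9\mathbf A_2$-configuration; moreover $g$ preserves or negates the transcendental lattice $T_X$, and since $g$ permutes the sections it fixes the class of the fiber $F$ of the natural fibration $\varphi$ (the fiber is determined as $3D_2-\sum(A_j+A_j')$ in terms of the configuration, cf. Section~\ref{subsec:NaturalEllFib}), so $g$ descends to an automorphism of $\PP^1$ fixing the two distinguished fibers $F_0\cong E_\lambda$ and $F_\infty\cong\He(\lambda)$ (these are distinguished because their images $E_\lambda$ and $\He(\lambda)$ are the two components of the branch sextic, and they have distinct $j$-invariants, Remark~\ref{rem:doublecoverCayleysian}); hence $g$ either fixes both points of $\PP^1$ or swaps them. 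Combined with the action on $H^{2,0}$, this shows $g$ is symplectic or equals $\sigma$ times a symplectic automorphism, so it suffices to bound the symplectic part $G_{\mathcal C}^{\mathrm{symp}}$. A symplectic $g$ acts trivially on $T_X$, hence on the orthogonal complement of $\NS(X_\lambda)$ in $H^2$, and therefore its action on $\NS(X_\lambda)$ is constrained: it fixes $F$, permutes $\{A_j,A_j'\}$, and permutes the $24$ curves $\Theta_j$ (the fiber components, whose classes are in the Appendix) compatibly with the fibration structure. I would then show directly, using the explicit Gram data of $\mathcal B_1$ and the intersection matrix displayed above, that the group of lattice isometries of $\NS(X_\lambda)$ that fix $F$, stabilize the set of $18$ section classes, act trivially on the discriminant form up to the known $\sigma'$-action, and preserve the ample cone, is exactly $G_{18}$ (order $18$); equivalently, the permutation of the $18$ sections together with the permutation of the $8$ singular fibers determines $g$, and the realizable permutations form a group of order $18$. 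The translations by the $3$-torsion sections $t_1,t_2$ act on the $A_j,A_j'$ via the two commuting $3$-cycles $(1,2,3)(4,5,6)(7,8,9)$ and $(1,4,7)(2,5,8)(3,6,9)$, which together with the $S_3$ coming from $\alpha_1$ and $\sigma'\beta$ exhaust the possible permutations of the nine $\mathbf A_2$-pairs compatible with the configuration's incidences.

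Finally I would assemble the pieces: $G_{\mathcal C}^{\mathrm{symp}}=G_{18}$, the non-symplectic part is generated by $\sigma$ (any non-symplectic element of $G_{\mathcal C}$ multiplied by $\sigma$ is symplectic, so $G_{\mathcal C}=\langle\sigma\rangle\ltimes G_{18}$, and since $\sigma$ is central—$\sigma$ commutes with $\tau$ by the lemma $\sigma=\tau\sigma\tau$ and with $\mathrm{Tor}_3$, and one checks it commutes with $\alpha_1,\alpha_2,\sigma'\beta$ from the explicit $\PP^5$-equations—the extension is a direct product), giving $G_{\mathcal C}\cong\ZZ_2\times(\ZZ_3\rtimes S_3)$ with $\sigma$ generating the center. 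The main obstacle is the upper bound $|G_{\mathcal C}^{\mathrm{symp}}|\le 18$: one must rule out extra symplectic symmetries, and the cleanest route is the lattice-theoretic one—show that an isometry of $\NS(X_\lambda)$ fixing $F$ and stabilizing the $18+24$ distinguished classes, acting as $\pm 1$ on $T_X$ and preserving ampleness, lies in the explicitly known group—which requires a careful but finite check against the Gram matrix of $\mathcal B_1$ and the tables of Appendix classes (alternatively, one invokes that the symplectic automorphism group of a K3 with this Néron–Severi lattice embeds in one of Mukai's groups and matches $\ZZ_3\rtimes S_3$ on order grounds). Everything else is bookkeeping with the already-established models.
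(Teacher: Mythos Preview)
Your outline is sound in spirit but differs from the paper's argument in the crucial upper-bound step, and that step is where your proposal is least concrete. The paper does not use the fibration geometry (fixed fibers $F_0,F_\infty$, action on $\PP^1$) at all. Instead it proceeds purely lattice-theoretically: it enumerates all $2^{9}\cdot 9!$ incidence-preserving bijections of $\{A_1,A_1',\dots,A_9,A_9'\}$, extends each $\QQ$-linearly by $D_2\mapsto D_2$, and checks in the integral base $\mathcal B_1$ which ones lie in $GL_{19}(\ZZ)$. Exactly $864$ do, forming a group $G_{864}\cong\ZZ/2\ZZ\times AGL_2(\FF_3)$ whose center is $\langle\sigma\rangle$. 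The paper then computes the image of $G_{432}\cong AGL_2(\FF_3)$ in $\Aut(A_{\NS(X)})$: it is a copy of $S_4$, with kernel $G_{18}\cong\ZZ_3\rtimes S_3$. Elements of $G_{18}$ act trivially on the discriminant group, hence glue with $\mathrm{Id}_{T_X}$ to effective Hodge isometries (they fix $D_{14}$), and Torelli produces the automorphisms. Elements of $G_{432}\setminus G_{18}$ are excluded because any such automorphism would be non-symplectic, hence (Picard number odd) act by $-\mathrm{Id}$ on $T_X$, hence by $-\mathrm{Id}$ on $A_{\NS(X)}$; but $-\mathrm{Id}\notin S_4$, contradiction.

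By contrast, you first split off $\sigma$ geometrically and then try to bound the symplectic stabilizer by a permutation/lattice argument that you describe only as ``a careful but finite check.'' That check is exactly where the paper's work lies, and the paper's mechanism for it---the explicit $864$-element enumeration and the discriminant-form image $S_4$ with $-\mathrm{Id}\notin S_4$---is absent from your plan. Your alternative suggestion of invoking Mukai's classification would give an a priori bound on the order of a symplectic group, but you would still need to show no larger Mukai group acts preserving $\mathcal C$, which again comes down to the lattice computation. So your route is a legitimate reorganization, but the decisive step is not the geometric reduction; it is the discriminant-group calculation, and without it (or an equivalent) the upper bound $|G_{\mathcal C}^{\mathrm{symp}}|\le 18$ remains an assertion.
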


\begin{proof}
Let $\phi$ be an element of $G_{\mathcal{C}}$. Since it preserves
globally the configuration $\mathcal{C}$, it must map its orthogonal
complement (generated by $D_{2}$, the pull-back of a line) to itself.
There are $2^{9}9!$ bijective maps 
\[
\mu:\{A_{1},A_{1}',\dots,A_{9},A_{9}'\}\to\{A_{1},A_{1}',\dots,A_{9},A_{9}'\}
\]
which preserves the incidence relations of $\mathcal{C}$. Since a
linear map is defined by the images of the vectors of a basis, each
map $\mu$ extends to a linear automorphism $\phi_{\mu}:\,\NS X_{\l})\otimes\QQ\to\NS X_{\l})\otimes\QQ$
sending $D_{2}$ to $D_{2}$ and the configuration $\mathcal{C}$
to itself. The action of $\phi$ on $\NS X_{\lambda})$ must be one
of these maps. Using the integral basis $\mathcal{B}$ of Proposition
\ref{prop:TheNS}, we obtain that among all the possibilities, only
$864$ matrices in basis $\mathcal{B}$ of such $\phi_{\mu}$ are
in $GL_{19}(\ZZ)$. These $864$ matrices are the elements of a group
$G_{864}$ isomorphic to the product of $\ZZ/2\ZZ$ with $AGL_{2}(\FF_{3})$,
the affine linear group of the space $\FF_{3}^{2}$. The center of
$G_{864}$ has order $2$ and is generated by the matrix of the non-symplectic
involution $\sigma$ defined in section \ref{subsec:More-auto}. In
the appendix, we give two generators $g_{1},g_{2}$ (of respective
order $8$ and $6$) of the group $G_{432}\subset G_{864}$ isomorphic
to $AGL_{2}(\FF_{3})$. Their action on the $2$-torsion part of the
discriminant group $A_{\NS X_{\lambda})}$ is trivial, and one computes
that their action on the $3$-torsion part of the discriminant group
$A_{\NS X_{\lambda})}$ is by the matrices 
\[
\bar{g}_{1}=\left(\begin{array}{ccc}
1 & 1 & 1\\
2 & 1 & 0\\
1 & 0 & 0
\end{array}\right),\,\bar{g}_{2}=\left(\begin{array}{ccc}
1 & 0 & 0\\
0 & 2 & 0\\
0 & 0 & 1
\end{array}\right)
\]
in basis $v_{1},v_{2},v_{3}$. The group $G_{\text{Disc}}$ generated
by $\bar{g}_{1},\,\bar{g}_{2}$ is isomorphic to the symmetric group
$S_{4}$. The kernel of the map $G_{432}\to G_{\text{Disc}}$ is a
group $G_{18}$ isomorphic to $\ZZ_{3}\rtimes S_{3}$. Let $\text{Tran}(X_{\lambda})$
be the orthogonal complement of $\NS X_{\lambda})$ in $H^{2}(X_{\lambda},\ZZ)$;
it is a rank $3$ signature $(2,1)$ lattice. There exists an isomorphism
\[
\g:A_{\text{Tran}(X_{\lambda})}\to A_{\NS X_{\lambda})}
\]
between the discriminant groups, such that the quadratic forms of
the groups satisfy $q_{\text{Tran}(X_{\lambda})}\circ\g=-q_{\NS X_{\lambda})}$.
Since an element $\phi\in G_{18}$ acts trivially on $A_{\NS X_{\lambda})}$,
we can extend it to an isometry $\G$ of $H^{2}(X_{\l},\ZZ)$ by gluing
it with the identity map on $\text{Tran}(X_{\lambda})$ (see e.g.~\cite[Theorem 12]{ShiodaRemarks}
and references therein for an example of such construction). Since
$\G$ is the identity on the space $\text{Tran}(X_{\lambda})\otimes\CC$
containing the period, this is a Hodge isometry. Since $\G$ naturally
preserves the polarization $D_{14}=4D_{2}-\sum_{j}(A_{j}+A_{j}')$
(studied in Proposition \ref{prop:degree8Model}), it is effective.
Therefore we can apply the Torelli Theorem for K3 surfaces (see \cite[Chap. VIII, Theorem 11.1]{BHPVdV})
and conclude that there exists a unique $g\in\aut(X_{\lambda})$ such
that $g^{*}=\G$ on $\NS X_{\lambda})$.

By \cite[Corollary 3.3.5]{Huybrecht}, since the Picard number of
$X_{\l}$ is odd, the only Hodge isometry on $\text{Tran}(X_{\lambda})$
is $\pm I_{d}$. Suppose that an element $g$ of $G_{432}$ not contained
in $G_{18}$ comes from an automorphism of $X_{\l}$. From the definition
of $g$, its action $\bar{g}$ on $A_{\NS X_{\lambda})}$ is non trivial,
thus its action on $\text{Tran}(X_{\lambda})$ is also non-trivial.
The automorphism $g$ is therefore non symplectic and it acts by $-Id$
on $\text{Tran}(X_{\lambda})$. Thus $\bar{g}\in G_{\text{Disc}}$
acts on the discriminant group $A_{\text{Tran}(X_{\lambda})}$ by
$-Id$. However $-Id$ is not contained in $G_{\text{Disc}}\simeq S_{4}$:
a contradiction and $g$ cannot come from an automorphism of $X_{\l}$.

Since $\sigma$ commutes with the elements of $G_{432}$, the automorphism
group of the configuration is the direct product of $G_{18}$ and
$\left\langle \sigma\right\rangle \simeq\ZZ_{2}$. 
\end{proof}
\begin{rem}
We give a geometric interpretation of the some elements of the group
$G_{\mathcal{C}}\simeq\ZZ_{2}\times(\ZZ_{3}\rtimes S_{3})$ in sub-section
\ref{subsec:A-CompletInterModelInP5}: the group $G_{\mathcal{C}}$
contains the $9$ translations by $3$-torsion automorphisms (from
the fibration $\varphi:X_{\lambda}\to\PP^{1}$), and the involution
$\sigma$ from the double cover $X_{\lambda}\to\PP^{2}$. One can
recover the automorphisms in the subgroup $\ZZ_{3}\rtimes S_{3}$
of $G_{\mathcal{C}}$ as follows:\\
Let $A$ be the abelian surface and let $J_{A}$ be the order $3$
symplectic automorphism acting on $A$ such that $X_{\l}=\Km_{3}(A)$
is the minimal resolution of $A/J_{A}$, the exceptional locus being
$\mathcal{C}$. The automorphism $J_{A}$ fixes $9$ points $0=s_{1},\dots,s_{9}$
which form a group isomorphic to $(\ZZ_{3})^{2}$. The group generated
by the translation by the $s_{k}$ and the involution $[-1]:z\in A\to-z\in A$
is isomorphic to $\ZZ_{3}\rtimes S_{3}$. These isomorphisms induce
automorphisms on $A/J_{A}$, hence automorphisms on K3 surface $X_{\l}$;
these automorphisms preserve $\mathcal{C}$. 
\end{rem}

\subsection{\label{subsec:sendingAConfToAnother}An automorphism sending a $9{\bf A}_{2}$-configuration
to another}

Let 
\[
\mathcal{C}'=B_{1},B_{1}',\dots,B_{9},B_{9}'
\]
be one of the nine $9{\bf A}_{2}$-configurations found in section
\ref{subsec:-9new-9A2-configurations} (so that $B_{j}B_{j}'=1$,
$B_{j}B_{k}=0$ for $j\neq k$), namely we choose the $9{\bf A}_{2}$-configuration
\[
\mathcal{C}'=\varTheta_{12},\varTheta_{9},\varTheta_{14},\varTheta_{5},\varTheta_{16},\varTheta_{7},\varTheta_{2},\varTheta_{3},\g_{1},\g_{1}',\varTheta_{4},\varTheta_{1},\varTheta_{8},\varTheta_{15},\varTheta_{6},\varTheta_{13},\varTheta_{10},\varTheta_{11},
\]
where the classes of the $\cu$-curves $\varTheta_{j}$ (which are
the curves $\t_{i,j,k,m,n,o}$ more conveniently labelled and numbered)
and $\g_{1},\g_{1}'$ are in the Appendix. The eight ${\bf A}_{2}$-configurations
$B_{j},B_{j}',\,j\in\{1,\dots,4,6,\dots,9\}$, are contained in the
fibers of $\varphi$ and the ${\bf A}_{2}$-configuration $B_{5},B_{5}'$
is the strict transform under the double cover map of a the quartic
curve $Q_{1}$. 
\begin{thm}
\label{MAINthm}There exists an automorphism $f$ of $X_{\l}$ sending
the curve $A_{j}$ (resp. $A_{j}'$) to the curve $B_{j}$ (resp.
$B_{j}'$) for $j\in\{1,\dots,9\}$. In particular, the configuration
$\mathcal{C}=A_{1},A_{1}',\dots,A_{9},A_{9}'$ is sent by $f$ to
the configuration $\mathcal{C}'$, and therefore the configuration
$\mathcal{C}'$ is on the $\aut(X_{\lambda})$-orbit of $\mathcal{C}$. 
\end{thm}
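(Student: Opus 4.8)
The plan is to obtain $f$ from the Torelli theorem, following the same pattern as the proof of Proposition \ref{pro:The-group-Aut-preserveConf}: first build a lattice isometry $\Gamma$ of $\NS X_{\l})$ realising the prescribed matching of the two configurations, then glue it to a Hodge isometry of $H^{2}(X_{\l},\ZZ)$, check that this Hodge isometry is effective, and conclude. For the first step, let $\calK_{3}'$ be the primitive closure in $\NS X_{\l})$ of the sublattice spanned by $B_{1},B_{1}',\dots,B_{9},B_{9}'$; since $\mathcal{C}'$ is a $9{\bf A}_{2}$-configuration this lattice is abstractly isometric to $\calK_{3}$. Using the classes of the curves $\varTheta_{j}$ and of $\g_{1},\g_{1}'$ recorded in the Appendix, together with the integral basis $\mathcal{B}_{1}$ of $\NS X_{\l})$ from Proposition \ref{prop:TheNS}, I would compute that the rank-$1$ orthogonal complement $\calK_{3}'^{\perp}$ is generated by a class $D_{2}'$ with $(D_{2}')^{2}=2$, and fix its sign by requiring $D_{2}\cdot D_{2}'>0$. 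Let $\Gamma$ be the linear map on $\NS X_{\l})\otimes\QQ$ sending $A_{j}\mapsto B_{j}$, $A_{j}'\mapsto B_{j}'$ ($j=1,\dots,9$) and $D_{2}\mapsto D_{2}'$. Because $\mathcal{C}$ and $\mathcal{C}'$ are $9{\bf A}_{2}$-configurations orthogonal respectively to $D_{2}$ and $D_{2}'$, the map $\Gamma$ restricts to an isometry between the finite-index sublattices $\ZZ D_{2}\oplus\langle A_{1},\dots,A_{9}'\rangle$ and $\ZZ D_{2}'\oplus\langle B_{1},\dots,B_{9}'\rangle$ of $\NS X_{\l})$ (see Section \ref{subsec:NaturalEllFib}); writing $\Gamma$ in the basis $\mathcal{B}_{1}$, one then checks that its matrix lies in $GL_{19}(\ZZ)$, so $\Gamma\in O(\NS X_{\l}))$.

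Next I would promote $\Gamma$ to a Hodge isometry. A direct computation of the induced action $\bar\Gamma$ on the discriminant group $A_{\NS X)}\cong\ZZ/2\ZZ\times(\ZZ/3\ZZ)^{3}$ should show that $\bar\Gamma$ is trivial on the $2$-torsion and equals $\e\cdot\mathrm{Id}$ on the $3$-torsion for some sign $\e\in\{+1,-1\}$. Via the anti-isometry $\g\colon A_{\text{Tran}(X)}\to A_{\NS X)}$ with $q_{\text{Tran}(X)}\circ\g=-q_{\NS X)}$, the isometry $\bar\Gamma$ is then glue-compatible with $\e\cdot\mathrm{Id}$ on $A_{\text{Tran}(X)}$; since $\rho(X_{\l})$ is odd, $\e\cdot\mathrm{Id}$ is a Hodge isometry of $\text{Tran}(X)$, as it fixes the line of the period. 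Gluing $\Gamma$ with $\e\cdot\mathrm{Id}_{\text{Tran}(X)}$, as in \cite[Theorem 12]{ShiodaRemarks} and references therein, then yields an isometry $\tilde\Gamma$ of $H^{2}(X_{\l},\ZZ)$ with $\tilde\Gamma|_{\NS X_{\l})}=\Gamma$; being $\pm\mathrm{Id}$ on the summand carrying the period, $\tilde\Gamma$ is a Hodge isometry.

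For effectivity, note that $\Gamma(D_{14})=4D_{2}'-\sum_{j=1}^{9}(B_{j}+B_{j}')=:D_{14}'$, with $(D_{14}')^{2}=14$ and $D_{14}'\cdot B_{j}=D_{14}'\cdot B_{j}'=1$. Running the argument of the proof of Proposition \ref{prop:degree8Model} for the configuration $\mathcal{C}'$ — using that for $j\neq 5$ the pairs $B_{j},B_{j}'$ are components of fibers of $\varphi$, while $B_{5},B_{5}'$ is the strict transform of the quartic $Q_{1}$ — one shows that $D_{14}'$ is ample on $X_{\l}$. Hence $\tilde\Gamma$ carries the ample class $D_{14}$ to the ample class $D_{14}'$, so it is effective, and by the Torelli theorem \cite[Chap.\ VIII, Theorem 11.1]{BHPVdV} there is $f\in\aut(X_{\l})$ acting on $\NS X_{\l})$ as $\Gamma$ (up to replacing $f$ by $f^{-1}$, according to whether one feeds $\tilde\Gamma$ or its inverse into Torelli). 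Since $\Gamma$ sends $[A_{j}]$ to $[B_{j}]$ and $[A_{j}']$ to $[B_{j}']$, and since each of $B_{j},B_{j}'$ is the unique effective divisor in its class, it follows that $f(A_{j})=B_{j}$ and $f(A_{j}')=B_{j}'$ for $j=1,\dots,9$, which is the claim.

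I expect the main obstacle to be the effectivity step. In Proposition \ref{pro:The-group-Aut-preserveConf} the isometry $\Gamma$ fixed the polarisation $D_{14}$, whereas here it moves $D_{14}$ to the new class $D_{14}'$ attached to $\mathcal{C}'$, so the ampleness of $D_{14}'$ genuinely has to be established — either by transporting the Nakai--Moishezon/elliptic-fibration analysis of Proposition \ref{prop:degree8Model} to the configuration $\mathcal{C}'$, or by testing $D_{14}'$ directly against the already enumerated $\cu$-curves on $X_{\l}$. A secondary point to nail down is that $\bar\Gamma$ really is of the form $\pm\mathrm{Id}$ on the $3$-torsion of the discriminant group, which is what makes the gluing with $\text{Tran}(X)$ possible; this is a finite computation, but it depends on the labelling $B_{1},\dots,B_{9}'$ of $\mathcal{C}'$ having been chosen compatibly with the presentation of $\calK_{3}$ used in Proposition \ref{prop:The-NS-latt-Discri54}.
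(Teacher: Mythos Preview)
Your strategy is exactly the paper's: build the isometry $\Gamma$ of $\NS X_{\l})$ sending $A_{j}\mapsto B_{j}$, $A_{j}'\mapsto B_{j}'$, $D_{2}\mapsto D_{2}'$, check its action on the discriminant group, glue with $\pm\mathrm{Id}$ on $\text{Tran}(X)$ to a Hodge isometry, prove effectivity via ampleness of $D_{14}'=\Gamma(D_{14})$, and invoke Torelli. The paper carries out the discriminant computation and finds the action is in fact the identity (so your $\e=+1$, and gluing is with $\mathrm{Id}_{\text{Tran}(X)}$); the matrix $f_{\mathcal{B}_{1}}$ of $\Gamma$ is recorded in the Appendix.

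Where you are vague --- the ampleness of $D_{14}'$ --- the paper supplies a concrete argument that does not simply transport Proposition~\ref{prop:degree8Model}. It first computes $D_{2}'=7D_{2}-2\sum_{j=1}^{9}(A_{j}+A_{j}')-2(A_{1}+A_{1}')$ and identifies $D_{14}'=D_{2}'+F_{1}$, where $F_{1}$ is the fibre class of the auxiliary fibration $f_{1}$ of Remark~\ref{rem:ExtraFib} (the fibration coming from the pencil of lines through the cusp $p_{1}$, \emph{not} $\varphi$); one checks that the $B_{j},B_{j}'$ are sections of $f_{1}$. Nefness of $D_{2}'$ is established geometrically: a \texttt{LinSys} search produces a pencil of degree~$7$ plane curves with multiplicity~$4$ at $p_{1}$ and multiplicity~$2$ at $p_{2},\dots,p_{9}$, whose strict transforms on $X_{\l}$ are smooth genus~$2$ curves in $|D_{2}'|$, so $D_{2}'$ is effective, nef and base-point free. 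Then for any irreducible $\cu$-curve $C$ with $D_{14}'C=0$ one gets $CF_{1}=CD_{2}'=0$; the first forces $C$ into a fibre of $f_{1}$, the second (since $(B_{1},\dots,B_{9}')^{\perp}=\ZZ D_{2}'$) forces $C$ to be one of the $B_{j},B_{j}'$, contradicting $CF_{1}=0$ since these are sections. Your fallback of ``testing $D_{14}'$ against the already enumerated $\cu$-curves'' would not suffice, as $X_{\l}$ has infinitely many.
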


\begin{proof}
For finding the automorphism $f$ we proceeded as in Proposition \ref{pro:The-group-Aut-preserveConf}:\\
 Let $\phi\in\aut(X_{\l})$ be an automorphism sending $\mathcal{C}$
to $\mathcal{C}'$. The orthogonal complement of $\mathcal{C}$ is
generated by $D_{2}$, the pull-back of a line in $\PP^{2}$ and the
orthogonal complements of $\mathcal{C}'$ is generated by the divisor
\[
D_{2}'=7D_{2}-2{\textstyle \sum_{j=1}^{j=9}}(A_{j}+A_{j}')-2(A_{1}+A_{1}'),
\]
of square $2$. Since $\phi$ must preserve the Néron-Severi lattice,
we have $\phi(D_{2})=D_{2}'$. There are $2^{9}9!$ bijective maps
\[
\mu:\{A_{1},A_{1}',\dots,A_{9},A_{9}'\}\to\{B_{1},B_{1}',\dots,B_{9},B_{9}'\}
\]
which preserves the incidence relations of $\mathcal{C}$ and $\mathcal{C}'$
and which extend uniquely to isometries $\tilde{\phi}_{\mu}$ of $\NS X_{\l})\otimes\QQ$.
If two of these maps $\tilde{\phi}_{1},\tilde{\phi}_{2}$ preserve
the lattice $\NS X_{\l})$, then $\tilde{\phi}_{2}^{-1}\tilde{\phi}_{1}$
also preserves that lattice, moreover it also preserves the configuration
$\mathcal{C}$. It is therefore an element of the group $G_{864}$
defined in the proof of Proposition \ref{pro:The-group-Aut-preserveConf}.
Thus the set of maps $\tilde{\phi}$ (among the maps $\tilde{\phi}_{\mu}$)
that preserve $\NS X_{\l})$ is the orbit of $\tilde{\phi}_{1}$ under
the action of $G_{864}$ on the left. The element $\tilde{\phi}_{1}$
defined by sending $A_{j}$ to $B_{j}$ and $A_{j}'$ to $B_{j}'$
preserves the lattice $\NS X_{\l})$ (its matrix $f_{\mathcal{B}}$
in basis $\mathcal{B}$ is given in the Appendix). Its action on the
discriminant group is trivial. Then, as in the proof of Proposition
\ref{pro:The-group-Aut-preserveConf}, we can extend $\tilde{\phi}_{1}$
to an Hodge isometry of $H^{2}(X_{\lambda},\ZZ)$.

Let us prove that this isometry is effective. The polarization (see
Proposition \ref{prop:degree8Model}) $D_{14}=4D_{2}-\sum(A_{j}+A_{j}')$
is sent to $D_{14}'=4D_{2}'-\sum(B_{j}+B_{j}')=D_{2}'+F_{1}$ where
$F_{1}$ is (the class of) a fiber of a fibration $f_{1}$ by Remark
\ref{rem:ExtraFib}. Using the classes of the curves in $\NS X_{\lambda})$,
it is easy to check that the curves $B_{j},B_{j}'$ are $18$ sections
of $f_{1}$.

The degree $7$ curve $Q\hookrightarrow$$\PP^{2}$ defined by{\footnotesize{}
\[
\begin{array}{c}
\l x^{7}-2\l^{2}x^{6}y+3\l^{3}x^{5}y^{2}-(2\l^{4}+\l)x^{4}y^{3}+\l^{5}x^{3}y^{4}+3\l^{3}x^{2}y^{5}\\
+\l^{2}y^{7}-2\l^{2}x^{6}z-1x^{5}yz+4\l x^{4}y^{2}z-(2\l^{5}+9\l^{2})x^{3}y^{3}z+(\l^{3}+1)x^{2}\\
y^{4}z-(3\l^{4}+2\l)xy^{5}z-2\l^{2}y^{6}z+3\l^{3}x^{5}z^{2}+4\l x^{4}yz^{2}+(3\l^{5}+4\l^{2})x^{3}y^{2}z^{2}\\
+(4\l^{3}-1)x^{2}y^{3}z^{2}+4\l xy^{4}z^{2}+\l^{2}y^{5}z^{2}-(2\l^{4}+\l)x^{4}z^{3}-(2\l^{5}+9\l^{2})x^{3}yz^{3}\\
+(4\l^{3}-1)x^{2}y^{2}z^{3}-(3\l^{4}+4\l)xy^{3}z^{3}+\l^{5}y^{4}z^{3}+\l^{5}x^{3}z^{4}+(\l^{3}+1)x^{2}yz^{4}\\
+4\l xy^{2}z^{4}+\l^{5}y^{3}z^{4}+3\l^{3}x^{2}z^{5}-(3\l^{4}+2\l)xyz^{5}+\l^{2}y^{2}z^{5}-2\l^{2}yz^{6}+\l^{2}z^{7}=0
\end{array}
\]
}is irreducible with singularities of multiplicity $4$ at the point
$p_{1}$ in $\mathcal{P}_{9}$, and with multiplicity $2$ at the
eight remaining points (the curve $Q$ was found by using \verb"LinSys").
The geometric genus of $Q$ is $1$ and $Q$ meets the branch locus
at two other points, so that its strict transform on $X_{\l}$ is
smooth of genus $2$ in the linear system $|D_{2}'|$. Thus $D_{2}'$
is effective, nef and $|D_{2}'|$ is base point free.

Let $C$ be an irreducible $\cu$-curve on $X_{\l}$. If $CF'=CD_{2}'=0$,
then $C$ is contained in a fiber of $f_{1}$ and is contracted by
$|D_{2}'|$. The second point implies that $C$ is one of the curves
$B_{j},B_{j}'$ (otherwise the Picard number of $X_{\l}$ would be
$>19$), but then one has $CF'=1$, which is a contradiction. Thus
$D_{14}'$ is ample and the isometry is effective.

We then conclude as in the proof of Proposition \ref{pro:The-group-Aut-preserveConf}
that there exists an automorphism of $X_{\l}$ such that its action
on the curves $A_{j},A_{j}'$ is as described. 
\end{proof}
We recall that for $k=1,...,9$, the point $p_{k}\in\mathcal{P}_{9}$
is the point over which $A_{k}+A_{k}'$ is contracted by the double
cover map $\eta:X_{\l}\to\PP^{2}$. 
\begin{rem}
\label{rem:ExtraFib}For any point $p_{k}$, the pencil of lines through
$p_{k}$ induces an elliptic fibration $f_{k}:X_{\l}\to\PP^{1}$ such
that the curves $A_{k},A_{k}'$ are sections and the curves $A_{j},A_{j}'$
for $j\neq k$ are contained in the fibers. The singular fibers of
$f_{k}$ are also $8\tilde{{\bf A}_{2}}$; from the known intersection
numbers of these fibers with the elements of basis $D_{2},A_{1},\dots,A_{9}'$,
we obtain that the class of a fiber is 
\[
F_{k}=D_{2}-(A_{k}+A_{k}')=\tfrac{1}{3}(F+{\textstyle \sum_{j=1}^{9}}(A_{j}+A_{j}'))-(A_{k}+A_{k}')=3D_{2}'-{\textstyle \sum_{j=1}^{9}}(B_{j}+B_{j}')
\]
where $F$ is a fiber of $\varphi$ and the $B_{j},B_{j}'$ are the
curve in the $9{\bf A}_{9}$-configuration found in Section \ref{subsec:-9new-9A2-configurations},
$D_{2}'$ being the generator of the orthogonal complement of the
$B_{j},B_{j}'$'s. The curves $B_{j},B_{j}'$ are sections of $f_{k}$. 
\end{rem}

So the geometric situation for the configuration $A_{1},A_{1}',\dots,A_{9},A_{9}'$
and fibration $f_{k}$ is very similar to the situation for $\mathcal{C}'$
and fibration $\varphi$ for which $8$ of the ${\bf A}_{2}$-configurations
in $B_{1},B_{1}',\dots,B_{9},B_{9}'$ are in the $8$ singular fibers
and the remaining one are sections.

\subsubsection{Aligned singularities of the union of the $12$ conics}

The fibration $\varphi$ has the remarkable property that the $18$
sections $A_{j},A_{j}'$ meet on the same fiber, which is isomorphic
to $E_{\l}$. It can be instructive to understand how the similar
result holds for the fibration $f_{1}$ (see Remark \ref{rem:ExtraFib})
and the curves $B_{j},B_{j}'$, this is the aim of this subsection,
which also gives an explanation why each line of the dual Hesse configuration
contains $4$ double points of the curve $\sum_{C\in\mathcal{C}_{12}}C$
and these double points form the set $\mathcal{P}_{12}$.

We recall that the $12$ conics in $\mathcal{C}_{12}$ meet in either
$4$ or $3$ points in $\mathcal{P}_{9}$. If two conics meet in $3$
points in $\mathcal{P}_{9}$ then the fourth intersection point is
an ordinary singularity of the union of the $12$ conics. Above the
$8$ conics that contain $p_{1}$ are the $16$ $\cu$-curves that
gives a $8{\bf A}_{8}$-configuration, which one can complete to a
$9{\bf A}_{2}$-configuration according to section \ref{subsec:-9new-9A2-configurations}.

The $8$ conics containing $p_{1}$ have the property that they meet
by pairs into $4$ points $q_{1},\dots,q_{4}$ not in $\mathcal{P}_{9}$
and that these $4$ points are on a line containing $p_{1}$. That
line $L_{1}$ is the tangent line to the cusp $p_{1}$ (it is one
of the lines of the dual Hesse configuration defined in Section \ref{subsec:The-Hesse,-dual}).
It meets the cuspidal sextic in $p_{1}$ (with multiplicity $3$)
and at points $r_{1},r_{2},r_{3}$. One can check that the fiber $F_{0}$
of $f_{1}$ which is the strict transform on $X_{\l}$ of $L_{1}$
is isomorphic to $E_{\l}$ (since we know the branch locus $F_{0}\to L_{1}$).

The $8$ points in $X_{\l}$ above $q_{1},\dots,q_{4}$ are the meeting
points of the curves in the $8{\bf A}_{2}$ configuration obtained
by taking the strict transform of the $8$ conics trough $p_{1}$.
Following the Figure \ref{figniceDiag}, the $9^{th}$ ${\bf A}_{2}$-configuration
also has its intersection point on that fiber $F_{0}$. In fact, the
fiber $F_{0}$ is the image by an automorphism of $X_{\l}$ of the
fiber over $0$ of $\varphi$.

The union of the dual Hesse configuration and the $12$ conics in
$\mathcal{C}_{12}$ is a line-conic arrangement with $9$ points of
multiplicity $9$, $12$ points of multiplicity $5$ and $72$ double
points, and with other remarkable properties studied in \cite{PS}.

\newpage

\section{\label{sec:Appendix} Appendix}

Let $D_{2}$ be the pull-back of a line by the double cover map $\eta:X_{\l}\to\PP^{1}$.
Let us define the following classes in the $\QQ$-basis $\mathcal{B}_{0}=(D_{2},A_{1},A_{1}',\dots,A_{9},A_{9}')$:
\[
B_{1}=2D_{2}-\tfrac{1}{3}\left({\textstyle \sum_{j=1}^{9}}2A_{j}+A_{j}'\right),\,\,B_{2}=2D_{2}-\tfrac{1}{3}\left({\textstyle \sum_{j=1}^{9}}A_{j}+2A_{j}'\right).
\]
We remark that $B_{1}^{2}=B_{2}^{2}=2$, $B_{1}B_{2}=5$ and $B_{1}+B_{2}=D_{14}$.
We have $D_{14}A_{j}=D_{14}A_{j}'=1$, therefore $B_{i}A_{j}\in\{0,1\}$,
$B_{i}A_{j}'\in\{0,1\}$. Using algorithms described in \cite{Roulleau},
we find that for $j\in\{1,\dots,9\}$, the classes of the curves $\gamma_{j},\gamma_{j}'$
above the quartic $Q_{j}$ are 
\[
\gamma_{j}=B_{1}-(A_{j}+A_{j}'),\,\gamma_{j}'=B_{2}-(A_{j}+A_{j}').
\]
It is easy to check that $\gamma_{j}^{2}=\gamma_{j}'^{2}=-2$, $\gamma_{j}\gamma_{j}'=1$,
and for $1\leq i\neq j\leq9$, we have $\gamma_{i}\gamma_{j}=\gamma_{i}'\gamma_{j}'=0$
and $\gamma_{i}\gamma_{j}'=3$. In fact, using that the image in $\PP^{2}$
of $\gamma_{j},\gamma_{j}'$ is a quartic curve that goes through
the points in $\mathcal{P}_{9}$ with a multiplicity $3$ at $p_{j}$,
one gets 
\[
4D_{2}\equiv\gamma_{j}+\gamma_{j}'+2(A_{j}+A_{j}')+{\textstyle \sum_{j=1}^{9}}(A_{j}+A_{j}').
\]

The classes in the $\QQ$-basis $\mathcal{B}_{0}=(L,A_{1},A_{1}',\dots,A_{9},A_{9}')$
of the $24$ $\cu$-curves $\t_{i,\dots,n},\t'_{i,\dots,n}$ above
the $12$ conics $C_{i,\dots,n}$ in $\mathcal{C}_{12}$ are {\footnotesize{}{}
\[
\begin{array}{c}
\t_{123456}=\frac{1}{3}(3,-2,-1,-2,-1,-2,-1,-1,-2,-1,-2,-1,-2,0,0,0,0,0,0),\\
\t'_{123456}=\frac{1}{3}(3,-1,-2,-1,-2,-1,-2,-2,-1,-2,-1,-2,-1,0,0,0,0,0,0),\\
\t_{123789}=\frac{1}{3}(3,-2,-1,-2,-1,-2,-1,0,0,0,0,0,0,-1,-2,-1,-2,-1,-2),\\
\t'_{123789}=\frac{1}{3}(3,-1,-2,-1,-2,-1,-2,0,0,0,0,0,0,-2,-1,-2,-1,-2,-1),\\
\t_{124578}=\frac{1}{3}(3,-2,-1,-1,-2,0,0,-2,-1,-1,-2,0,0,-2,-1,-1,-2,0,0),\\
\t'_{124578}=\frac{1}{3}(3,-1,-2,-2,-1,0,0,-1,-2,-2,-1,0,0,-1,-2,-2,-1,0,0),\\
\t_{124689}=\frac{1}{3}(3,-2,-1,-1,-2,0,0,-1,-2,0,0,-2,-1,0,0,-2,-1,-1,-2),\\
\t'_{124689}=\frac{1}{3}(3,-1,-2,-2,-1,0,0,-2,-1,0,0,-1,-2,0,0,-1,-2,-2,-1),\\
\t_{125679}=\frac{1}{3}(3,-2,-1,-1,-2,0,0,0,0,-2,-1,-1,-2,-1,-2,0,0,-2,-1),\\
\t'_{125679}=\frac{1}{3}(3,-1,-2,-2,-1,0,0,0,0,-1,-2,-2,-1,-2,-1,0,0,-1,-2),\\
\t_{134589}=\frac{1}{3}(3,-2,-1,0,0,-1,-2,-1,-2,-2,-1,0,0,0,0,-1,-2,-2,-1),\\
\t'_{134589}=\frac{1}{3}(3,-1,-2,0,0,-2,-1,-2,-1,-1,-2,0,0,0,0,-2,-1,-1,-2),\\
\t_{134679}=\frac{1}{3}(3,-2,-1,0,0,-1,-2,-2,-1,0,0,-1,-2,-2,-1,0,0,-1,-2),\\
\t'_{134679}=\frac{1}{3}(3,-1,-2,0,0,-2,-1,-1,-2,0,0,-2,-1,-1,-2,0,0,-2,-1),\\
\t_{135678}=\frac{1}{3}(3,-2,-1,0,0,-1,-2,0,0,-1,-2,-2,-1,-1,-2,-2,-1,0,0),\\
\t'_{135678}=\frac{1}{3}(3,-1,-2,0,0,-2,-1,0,0,-2,-1,-1,-2,-2,-1,-1,-2,0,0),\\
\t{}_{234579}=\frac{1}{3}(3,0,0,-2,-1,-1,-2,-2,-1,-1,-2,0,0,-1,-2,0,0,-2,-1),\\
\t'_{234579}=\frac{1}{3}(3,0,0,-1,-2,-2,-1,-1,-2,-2,-1,0,0,-2,-1,0,0,-1,-2),\\
\t{}_{234678}=\frac{1}{3}(3,0,0,-2,-1,-1,-2,-1,-2,0,0,-2,-1,-2,-1,-1,-2,0,0),\\
\t'_{234678}=\frac{1}{3}(3,0,0,-1,-2,-2,-1,-2,-1,0,0,-1,-2,-1,-2,-2,-1,0,0),\\
\t{}_{235689}=\frac{1}{3}(3,0,0,-2,-1,-1,-2,0,0,-2,-1,-1,-2,0,0,-2,-1,-1,-2),\\
\t'_{235689}=\frac{1}{3}(3,0,0,-1,-2,-2,-1,0,0,-1,-2,-2,-1,0,0,-1,-2,-2,-1),\\
\t_{456789}=\frac{1}{3}(3,0,0,0,0,0,0,-1,-2,-1,-2,-1,-2,-2,-1,-2,-1,-2,-1),\\
\t'_{456789}=\frac{1}{3}(3,0,0,0,0,0,0,-2,-1,-2,-1,-2,-1,-1,-2,-1,-2,-1,-2).
\end{array}
\]
}{\footnotesize\par}

We also denote by $\varTheta_{j},\,j=1,\dots,24$ these curves in
the order of the above list.

For $k=1,...,9$, the quartic curves $Q_{k}$ through $\mathcal{P}_{9}$
that have a multiplicity $3$ singular point at $p_{k}$ are:

{\footnotesize{}{} 
\[
\begin{array}{c}
Q_{1}:\,\,x^{4}-2\l x^{3}y+3\l^{2}x^{2}y^{2}-(\l^{3}+1)xy^{3}+\l y^{4}-2\l x^{3}z+(-\l^{3}+1)xy^{2}z-2\l y^{3}z\hfill\\
+3\l^{2}x^{2}z^{2}+(-\l^{3}+1)xyz^{2}+(\l^{4}+2\l)y^{2}z^{2}-(\l^{3}+1)xz^{3}-2\l yz^{3}+\l z^{4}=0,\\
Q_{2}:\,\,x^{4}-(\l^{3}+1)/\l x^{3}y+3\l x^{2}y^{2}-2xy^{3}+1/\l y^{4}-2x^{3}z+(-\l^{3}+1)/\l x^{2}yz-2y^{3}z\\
+(\l^{3}+2)x^{2}z^{2}+(1-\l^{3})/\l xyz^{2}+3\l y^{2}z^{2}-2xz^{3}-(\l^{3}+1)/\l yz^{3}+z^{4}=0,\\
Q_{3}:\,\,x^{4}-2x^{3}y+(\l^{3}+2)x^{2}y^{2}-2xy^{3}+y^{4}-(\l^{3}+1)/\l x^{3}z+(1-\l^{3})/\l x^{2}yz\hfill\\
+(1-\l^{3})/\l xy^{2}z-(\l^{3}+1)/\l y^{3}z+3\l x^{2}z^{2}+3\l y^{2}z^{2}-2xz^{3}-2yz^{3}+1/\l z^{4}=0,\\
Q_{4}:\,\,x^{4}+(2\omega+2)\l x^{3}y+3\omega\l^{2}x^{2}y^{2}-(\l^{3}+1)xy^{3}-(\omega+1)\l y^{4}-2\omega\l x^{3}z\hfill\\
-(\omega^{2}\l^{3}+\omega+1)xy^{2}z-2\omega\l y^{3}z-(3\omega+3)\l^{2}x^{2}z^{2}+(\omega-\omega\l^{3})xyz^{2}\\
+(\l^{4}+2\l)y^{2}z^{2}-(\l^{3}+1)xz^{3}+(2\omega+2)\l yz^{3}+\omega\l z^{4}=0,\\
Q_{5}:\,\,x^{4}-(\omega^{2}\l^{3}+\omega^{2})/\l x^{3}y+3\omega\l x^{2}y^{2}-2xy^{3}-(\omega+1)/\l y^{4}-2\omega x^{3}z\hfill\\
+(1-\l^{3})/\l x^{2}yz-2\omega y^{3}z-((\omega+1)\l^{3}+2\omega+2)x^{2}z^{2}+(\omega-\omega\l^{3})/\l xyz^{2}\\
+3\l y^{2}z^{2}-2xz^{3}-\omega^{2}(\l^{3}+1)/\l yz^{3}+\omega z^{4}=0,\\
Q_{6}:\,\,x^{4}+(2\omega+2)x^{3}y+(\omega\l^{3}+2\omega)x^{2}y^{2}-2xy^{3}+\omega^{2}y^{4}-(\omega\l^{3}+\omega)/\l x^{3}z\hfill\\
+(1-\l^{3})/\l x^{2}yz-(\omega^{2}\l^{3}-\omega^{2})/\l xy^{2}z-(\omega\l^{3}+\omega)/\l y^{3}z\\
-(3\omega+3)\l x^{2}z^{2}+3\l y^{2}z^{2}-2xz^{3}+(2\omega+2)yz^{3}+\omega/\l z^{4}=0,\\
Q_{7}:\,\,x^{4}-2\omega\l x^{3}y-(3\omega+3)\l^{2}x^{2}y^{2}-(\l^{3}+1)xy^{3}+\omega\l y^{4}+(2\omega+2)\l x^{3}z\hfill\\
+(\omega-\omega\l^{3})xy^{2}z+(2\omega+2)\l y^{3}z+3\omega\l^{2}x^{2}z^{2}-(\omega^{2}\l^{3}-\omega^{2})xyz^{2}\\
+(\l^{4}+2\l)y^{2}z^{2}-(\l^{3}+1)xz^{3}-2\omega\l yz^{3}+\omega^{2}\l z^{4}=0,\\
Q_{8}:\,\,x^{4}-(\omega\l^{3}+\omega)/\l x^{3}y-(3\omega+3)\l x^{2}y^{2}-2xy^{3}+\omega/\l y^{4}+(2\omega+2)x^{3}z\hfill\\
+(1-\l^{3})/\l x^{2}yz+(2\omega+2)y^{3}z+(\omega\l^{3}+2\omega)x^{2}z^{2}-(\omega^{2}\l^{3}-\omega^{2})/\l xyz^{2}\\
+3\l y^{2}z^{2}-2xz^{3}-(\omega\l^{3}+\omega)/\l yz^{3}+\omega^{2}z^{4}=0,\\
Q_{9}:\,\,x^{4}-2\omega x^{3}y+(\omega^{2}\l^{3}-2\omega-2)x^{2}y^{2}-2xy^{3}+\omega y^{4}-(\omega^{2}\l^{3}+\omega^{2})/\l x^{3}z\hfill\\
+(1-\l^{3})/\l x^{2}yz+(-\omega\l^{3}+\omega)/\l xy^{2}z-(\omega^{2}\l^{3}+\omega^{2})/\l y^{3}z\\
+3\omega\l x^{2}z^{2}+3\l y^{2}z^{2}-2xz^{3}-2\omega yz^{3}+\omega^{2}/\l z^{4}=0,
\end{array}
\]
}where $\o^{2}+\o+1=0$. 

The matrices in basis $\mathcal{B}$ of the generators of the group
$G_{432}\simeq AGL_{2}(\FF_{3})$ preserving the natural $9{\bf A}_{2}$-configuration
$A_{1},A_{1}',\dots,A_{9},A_{9}'$ are 
\[
\mbox{\footnotesize\ensuremath{g_{1}=\left(\begin{array}{ccccccccccccccccccc}
1 & 0 & 0 & 0 & 0 & 0 & 0 & 1 & -2 & 0 & 0 & 0 & 0 & 0 & 0 & 1 & 0 & 1 & 0\\
0 & 0 & 0 & 0 & 0 & -1 & 1 & 0 & 0 & 0 & 0 & 0 & 0 & 0 & 1 & 0 & 0 & -1 & 0\\
0 & 0 & 0 & 0 & 0 & 0 & 1 & 0 & 0 & 0 & 0 & 0 & 0 & 0 & 1 & 0 & 0 & 0 & 0\\
0 & 1 & 0 & 0 & 0 & 1 & -1 & 1 & -1 & 0 & 0 & 0 & 0 & 0 & -1 & 0 & 0 & 1 & 0\\
0 & 0 & 1 & 0 & 0 & 2 & -2 & 1 & 0 & 0 & 0 & 0 & 0 & 0 & -1 & -1 & 0 & 1 & 0\\
0 & 0 & 0 & 0 & 0 & 4 & -2 & 1 & 0 & 1 & 0 & 0 & 0 & 0 & -1 & -1 & -1 & 2 & 0\\
0 & 0 & 0 & 0 & 0 & 2 & -1 & 1 & -1 & 0 & 1 & 0 & 0 & 0 & -1 & 0 & -1 & 1 & 0\\
0 & 0 & 0 & 0 & 0 & -1 & 1 & 0 & 1 & 0 & 0 & 0 & 1 & 0 & 0 & 0 & 1 & -1 & 0\\
0 & 0 & 0 & 0 & 0 & -2 & 2 & -1 & 1 & 0 & 0 & 0 & 0 & 1 & 1 & 0 & 1 & -1 & 0\\
0 & 0 & 0 & 1 & 0 & -1 & 0 & 0 & -1 & 0 & 0 & 0 & 0 & 0 & 0 & 1 & 0 & 0 & 0\\
0 & 0 & 0 & 0 & 1 & -2 & 0 & -1 & 0 & 0 & 0 & 0 & 0 & 0 & 0 & 1 & 0 & -1 & 0\\
0 & 0 & 0 & 0 & 0 & 2 & -1 & 0 & 0 & 0 & 0 & 1 & 0 & 0 & 0 & -1 & -1 & 1 & 0\\
0 & 0 & 0 & 0 & 0 & -3 & 2 & -1 & 1 & 0 & 0 & 0 & 0 & 0 & 1 & 1 & 1 & -2 & 0\\
0 & 0 & 0 & 0 & 0 & 0 & 1 & 0 & 1 & 0 & 0 & 0 & 0 & 0 & 0 & 0 & 0 & 0 & 0\\
0 & 0 & 0 & 0 & 0 & -2 & 1 & -1 & 2 & 0 & 0 & 0 & 0 & 0 & 1 & 0 & 1 & -2 & 0\\
0 & 0 & 0 & 0 & 0 & 1 & 1 & -1 & 2 & 0 & 0 & 0 & 0 & 0 & 1 & -1 & 0 & 0 & 0\\
0 & 0 & 0 & 0 & 0 & 3 & -3 & 0 & 0 & 0 & 0 & 0 & 0 & 0 & -1 & -1 & -1 & 1 & 0\\
0 & 0 & 0 & 0 & 0 & -4 & 2 & -2 & 1 & 0 & 0 & 0 & 0 & 0 & 1 & 1 & 1 & -2 & 1\\
0 & 0 & 0 & 0 & 0 & 2 & -1 & 1 & 1 & 0 & 0 & 0 & 0 & 0 & -1 & -1 & 0 & 1 & 0
\end{array}\right),} }
\]
\[
\mbox{\footnotesize\ensuremath{g_{2}=\left(\begin{array}{ccccccccccccccccccc}
1 & 0 & 0 & 0 & 0 & 0 & 0 & 1 & -2 & 0 & 0 & 0 & 0 & 0 & 0 & 1 & 0 & 1 & 0\\
0 & 0 & 0 & 0 & 0 & -1 & 1 & 0 & 0 & 1 & 0 & 0 & 0 & 2 & 0 & 0 & 1 & 0 & 0\\
0 & 0 & 0 & 0 & 0 & 0 & 1 & 0 & 0 & 0 & 1 & 0 & 0 & 1 & 0 & 0 & 0 & 0 & 0\\
0 & 0 & 0 & 0 & 0 & 1 & -1 & 1 & -1 & 0 & 0 & 0 & 0 & -1 & 0 & 0 & -1 & 0 & 0\\
0 & 0 & 0 & 0 & 0 & 2 & -2 & 1 & 0 & 0 & 0 & 0 & 0 & -2 & 0 & 0 & -1 & 0 & 0\\
0 & 1 & 0 & 0 & 0 & 4 & -2 & 1 & 0 & 0 & 0 & 0 & 0 & -4 & 0 & 0 & -2 & 1 & 0\\
0 & 0 & 1 & 0 & 0 & 2 & -1 & 1 & -1 & 0 & 0 & 0 & 0 & -2 & 0 & 0 & -1 & 1 & 0\\
0 & 0 & 0 & 1 & 0 & -1 & 1 & 0 & 1 & 0 & 0 & 0 & 0 & 1 & 0 & 0 & 1 & 0 & 0\\
0 & 0 & 0 & 0 & 1 & -2 & 2 & -1 & 1 & 0 & 0 & 0 & 0 & 2 & 0 & 0 & 1 & 0 & 0\\
0 & 0 & 0 & 0 & 0 & -1 & 0 & 0 & -1 & 0 & 0 & 1 & 0 & 1 & 0 & 0 & 0 & 0 & 0\\
0 & 0 & 0 & 0 & 0 & -2 & 0 & -1 & 0 & 0 & 0 & 0 & 0 & 2 & 0 & 0 & 1 & -1 & 0\\
0 & 0 & 0 & 0 & 0 & 2 & -1 & 0 & 0 & 0 & 0 & 0 & 1 & -2 & 0 & 0 & -1 & 0 & 0\\
0 & 0 & 0 & 0 & 0 & -3 & 2 & -1 & 1 & 0 & 0 & 0 & 0 & 3 & 0 & 0 & 2 & -1 & 0\\
0 & 0 & 0 & 0 & 0 & 0 & 1 & 0 & 1 & 0 & 0 & 0 & 0 & 0 & 0 & 0 & 0 & 0 & 0\\
0 & 0 & 0 & 0 & 0 & -2 & 1 & -1 & 2 & 0 & 0 & 0 & 0 & 3 & 0 & -1 & 2 & -1 & 0\\
0 & 0 & 0 & 0 & 0 & 1 & 1 & -1 & 2 & 0 & 0 & 0 & 0 & 0 & 1 & -1 & 0 & 0 & 0\\
0 & 0 & 0 & 0 & 0 & 3 & -3 & 0 & 0 & 0 & 0 & 0 & 0 & -3 & 0 & 0 & -2 & 0 & 0\\
0 & 0 & 0 & 0 & 0 & -4 & 2 & -2 & 1 & 0 & 0 & 0 & 0 & 3 & 0 & 0 & 2 & -1 & 1\\
0 & 0 & 0 & 0 & 0 & 2 & -1 & 1 & 1 & 0 & 0 & 0 & 0 & -3 & 0 & 0 & -1 & 0 & 0
\end{array}\right),} }
\]

The automorphism $f$ of Theorem \ref{MAINthm} acts on the Néron-Severi
lattice by 
\[
\mbox{\footnotesize\ensuremath{f_{\mathcal{B}}=\left(\begin{array}{ccccccccccccccccccc}
0 & 1 & 0 & 0 & 0 & 0 & 1 & 0 & 1 & 1 & 0 & 0 & 0 & 0 & 0 & 0 & 0 & 0 & 1\\
0 & 1 & -1 & 0 & 0 & -1 & 0 & 0 & 0 & -1 & 0 & 1 & -1 & 0 & 0 & 0 & 0 & 0 & 0\\
0 & 0 & 0 & 0 & 0 & -1 & 0 & 0 & 0 & -1 & 0 & 0 & -1 & 0 & 0 & 0 & 0 & 0 & 0\\
0 & 0 & 0 & 0 & 0 & 1 & 0 & 0 & 0 & 1 & -1 & -1 & 0 & 0 & 0 & 0 & 0 & 1 & 0\\
0 & -1 & 0 & 0 & 0 & 2 & -1 & 0 & 0 & 1 & -1 & -2 & 1 & 0 & 0 & 0 & 0 & 0 & -1\\
0 & -3 & 2 & 0 & 0 & 2 & -1 & 1 & -1 & 2 & -2 & -3 & 2 & -1 & 0 & 0 & 0 & 0 & -1\\
0 & -1 & 1 & 0 & 0 & 1 & 0 & 0 & 0 & 1 & -1 & -2 & 1 & -1 & 0 & 0 & 0 & 0 & 0\\
1 & 0 & 0 & 0 & 0 & 0 & 0 & 0 & 0 & 0 & 1 & 1 & -1 & 1 & 1 & 0 & 0 & 0 & 0\\
1 & 1 & 0 & 0 & 0 & -1 & 0 & 0 & 0 & -1 & 2 & 2 & -1 & 1 & 0 & 0 & 0 & 0 & 1\\
0 & 1 & -1 & 0 & 0 & -1 & 1 & -1 & 1 & 0 & 0 & 1 & 0 & 0 & 0 & 0 & 0 & 0 & 1\\
0 & 1 & -1 & 0 & 0 & -1 & 1 & -1 & 1 & -1 & 1 & 2 & 0 & 0 & 0 & 0 & 0 & 0 & 1\\
0 & -1 & 1 & 0 & 0 & 1 & -1 & 0 & 0 & 1 & -1 & -1 & 1 & -1 & 0 & 0 & 0 & 0 & 0\\
1 & 2 & -1 & 0 & 0 & -2 & 1 & 0 & 0 & -1 & 2 & 2 & -1 & 1 & 0 & 1 & 1 & 0 & 1\\
1 & 0 & 0 & 0 & 0 & 0 & 0 & 1 & -1 & 0 & 1 & 0 & 0 & 0 & 0 & 1 & 0 & 0 & 0\\
1 & 1 & -1 & 0 & 1 & -1 & 0 & 0 & 0 & -1 & 2 & 2 & -1 & 1 & 0 & 1 & 0 & 0 & 0\\
1 & -1 & 1 & 1 & 0 & 0 & -1 & 1 & -1 & 0 & 1 & 0 & 0 & 0 & 0 & 1 & 0 & 0 & 0\\
-1 & -2 & 1 & 0 & 0 & 2 & -1 & 0 & 0 & 1 & -2 & -2 & 2 & -1 & 0 & -1 & 0 & 0 & -1\\
0 & 2 & -1 & 0 & 0 & -2 & 1 & -1 & 0 & -2 & 2 & 3 & -1 & 1 & 0 & 0 & 0 & 0 & 1\\
0 & -2 & 1 & 0 & 0 & 2 & -1 & 1 & -1 & 1 & -1 & -2 & 1 & 0 & 0 & 0 & 0 & 0 & -2
\end{array}\right).} }
\]

Let us define 
\[
S={\textstyle \sum_{j=1}^{9}}A_{i},\,\,S'={\textstyle \sum_{j=1}^{9}}A_{i}'
\]
The classes of the $\cu$-curves $(R_{i})$ defined in Section \ref{subsec:A-Hessian-model}
are 
\[
(R_{i})=2L-\tfrac{1}{3}(S+2S'+3A_{i}+3A'_{i}),
\]
where $L$ is the pullback of a line by the double cover map $X_{\l}\to\PP^{2}$.
The translation automorphism $\tau$ defined in Section \ref{subsec:A-Hessian-model}
sends $L$ to the class 
\[
L'=7L-\tfrac{4}{3}(S+2S').
\]

The three polynomials $A,B,D$ in $\QQ(\o)(t)$ of Section \ref{subsec:A-Weierstrass-equation}
are defined as follows: 
\[
\begin{array}{c}
A=(\l^{3}t^{2}+3\l^{3}-4t^{2})(\l^{3}t^{2}+3\l^{3}+(6\o+6)\l^{2}t^{2}+(-6\o-6)\l^{2}-4t^{2})\\
\cdot(\l^{3}t^{2}+3\l^{3}-6\l^{2}t^{2}+6\l^{2}-4t^{2})(\l^{3}t^{2}+3\l^{3}-6\o\l^{2}t^{2}+6\o\l^{2}-4t^{2}),
\end{array}
\]
\[
\begin{array}{c}
B=(\l^{6}t^{4}+6\l^{6}t^{2}+9\l^{6}+6\l^{5}t^{4}+12\l^{5}t^{2}-18\l^{5}-18\l^{4}t^{4}+36\l^{4}t^{2}-18\l^{4}\hfill\\
-8\l^{3}t^{4}-24\l^{3}t^{2}-24\l^{2}t^{4}+24\l^{2}t^{2}+16t^{4})\hfill\\
\cdot(\l^{6}t^{4}+6\l^{6}t^{2}+9\l^{6}+(-6\o-6)\l^{5}t^{4}+(-12\o-12)\l^{5}t^{2}+(18\o+18)\l^{5}-18\o\l^{4}t^{4}\\
+36\o\l^{4}t^{2}-18\o\l^{4}-8\l^{3}t^{4}-24\l^{3}t^{2}+(24\o+24)\l^{2}t^{4}+(-24\o-24)\l^{2}t^{2}+16t^{4})\\
\cdot(\l^{6}t^{4}+6\l^{6}t^{2}+9\l^{6}+6\o\l^{5}t^{4}+12\o\l^{5}t^{2}-18\o\l^{5}+(18\o+18)\l^{4}t^{4}\hfill\\
+(-36\o-36)\l^{4}t^{2}+(18\o+18)\l^{4}-8\l^{3}t^{4}-24\l^{3}t^{2}-24\o\l^{2}t^{4}+24\o\l^{2}t^{2}+16t^{4}),
\end{array}
\]
\[
\begin{array}{c}
D=((\l+2)t-(2\o+1)\l)((\l-2\o-2)t-(2\o+1)\l)((\l+2\o)t-(2\o+1)\l)\\
\cdot(t^{2}-1)((\l+2)t+(2\o+1)\l)((\l-2\o-2)t+(2\o+1)\l)((\l+2\o)t+(2\o+1)\l).
\end{array}
\]

\vspace{4mm}

\noindent David Kohel, Xavier Roulleau, \\
 Aix-Marseille Université, CNRS, Centrale Marseille, \\
 I2M UMR 7373, \\
 13453 Marseille, France \\
 \texttt{David.Kohel@univ-amu.fr} \\
 \texttt{Xavier.Roulleau@univ-amu.fr} \vspace{4mm}
 \\
 Alessandra Sarti, \\
 Laboratoire de Mathématiques et Applications, UMR CNRS 7348,\\
 Université de Poitiers, Téléport 2,\\
 Boulevard Marie et Pierre Curie, \\
 86962 Futuroscope Chasseneuil, France \\
 \texttt{sarti@math.univ-poitiers.fr} \\

http://www-math.sp2mi.univ-poitiers.fr/~sarti/ 

\begin{thebibliography}{10}
\bibitem{AD} Artebani M., Dolgachev I., The Hesse pencil of plane
cubic curves, Ens. Math. (2) 55 (2009), no. 3-4, 235--273.

\bibitem{Beauville} Beauville A., Complex algebraic surfaces, 2nd
ed. Lond. Math. Soc., 34. Cambridge University Press, 1996. x+132
pp.

\bibitem{bertin} J. Bertin, Réseaux de Kummer et surfaces K3, Invent.
Math. 93 (1988), no. 2, 267--284.

\bibitem{Barth} Barth W., K3 surfaces with nine cusps. Geom. Dedicata
72 (1998), no. 2, 171--178.

\bibitem{barth2} Barth W., On the classification of K3 surfaces with
nine cusps, Complex analysis and algebraic geometry, de Gruyter, Berlin,
2000, pp. 41--59. MR 1760871.

\bibitem{BHPVdV} Barth W., Hulek K., Peters C.A.M., Van de Ven A.,
Compact complex surfaces. Sd ed. Erg Math Grenz. 3. Folge., 4. Springer-Verlag,
Berlin, 2004. xii+436 pp.

\bibitem{BL} Birkenhake C., Lange H., A family of Abelian surfaces
and curves of genus four, Manus. Math. 85, (1994), 393--407.

\bibitem{Magma} Bosma W,. Cannon J., Playoust C., The Magma algebra
system. I. The user language, Computational algebra and number theory
(London, 1993), J. Symbolic Comput. 24, 1997, 3-4, 235-{}-265. 

\bibitem{conway} Conway J. H., Sloane N. J. A., Sphere packings,
lattices and groups, Sde. ed., Grund. der Math. Wiss 290. Springer-Verlag,
New York, 1993. xliv+679 pp.

\bibitem{Dolgachev} Dolgachev I., Abstract configurations in algebraic
geometry, The Fano Conference, 423--462, Univ. Torino, Turin, 2004.

\bibitem{DLPU} Dolgachev I., Laface A., Persson U., Urzúa G, Chilean
configuration of conics, lines and points, preprint.

\bibitem{GH} Griffiths P., Harris J., Principles of Algebraic Geometry,
Wiley-Interscience, New York, 1978. xii+813 pp.

\bibitem{GriH} Gritsenko V., Hulek K., Minimal Siegel modular threefolds,
Math. Proc. Cambridge Philos. Soc. 123 (1998), 461--485.

\bibitem{Huybrecht} Huybrecht D., Lectures on K3 surfaces, Camb.
Stud. Adv. Math., 158, Cambridge University Press, 2016. xi+485 pp.

\bibitem{Miranda} Miranda R., The Basic Theory of Elliptic Surfaces,
Dottorato di Ricerca in Matematica, ETS Editrice, Pisa, 1989. vi+108
pp.

\bibitem{morrison} Morrison D. R., On K3 surfaces with large Picard
number. Invent. Math. 75 (1984), no. 1, 105--121. 

\bibitem{Nikulin} Nikulin V.V., On Kummer surfaces, Izv. Akad. Nauk
SSSR Ser. Mat. 39 (1975), 278--293. English translation: Math. USSR.
Izv, 9 (1975), 261--275.

\bibitem{NikulinInteger} Nikulin V.V., Integer symmetric bilinear
forms and some of their geometric applications, Izv. Akad. Nauk SSSR
Ser. Mat., 43, 1979, no 1, 111--177.

\bibitem{PS} Pokora P., Szemberg T., Conic-line arrangements in the
complex projective plane, preprint, ArXiv 2002.01760

\bibitem{Roulleau} Roulleau X., On the geometry of K3 surfaces with
finite automorphism group: the compact case, preprint, ArXiv 1909.01909

\bibitem{RS1} Roulleau X., Sarti A., Construction of Nikulin configurations
on some Kummer surfaces and applications, Math. Annalen 373 (2019),
no 11, 7651--7668.

\bibitem{RS2} Roulleau X., Sarti A., Explicit Nikulin configurations
on Kummer surfaces, preprint, ArXiv 1907.12215

\bibitem{sarti_tran} Sarti A., Transcendental lattices of some K3-surfaces,
Math. Nachr. 281 (2008), no. 7, 1031--1046.

\bibitem{SaintDonat} Saint Donat B., Projective models of K-3 surfaces.
Amer. J. Math. 96 (1974), 602--639.

\bibitem{ShiodaSchutt} Schütt M., Shioda T., Mordell-Weil lattices,
Erg. der Math. 3. Folge. A Series of Modern Surveys in Mathematics
70. Springer, 2019. xvi+431 pp.

\bibitem{Silverman} Silverman J., Advanced topics in the arithmetic
of elliptic curves, Graduate Texts in Mathematics, 151. Springer-Verlag,
New York, 1994. xiv+525 pp.

\bibitem{Shimada} Shimada I., On elliptic K3 surfaces, Michigan Math.
J. 47 (2000), no. 3, 423--446.

\bibitem{Shimada1} Shimada I., On elliptic K3 surfaces, arXiv:math/0505140

\bibitem{Shioda} Shioda T., On elliptic modular surfaces. J. Math.
Soc. Japan 24 (1972), 20--59.

\bibitem{ShiodaRemarks} Shioda T., Some remarks on abelian varieties,
J. Fac. Sci. Univ. Tokyo Sect. IA 24 (1977),11--21. 
\end{thebibliography}
\end{document}